\documentclass[a4paper, 11pt]{article}

\usepackage[utf8]{inputenc}
\usepackage[english]{babel}
\usepackage{amsthm, amsmath, amssymb}
\usepackage[T1]{fontenc}
\usepackage{fullpage}
\usepackage{lipsum}
\usepackage{authblk}
\usepackage{url} 
\usepackage{hyperref}
\usepackage{graphicx}
\usepackage{mathtools}
\usepackage{bbm}
\usepackage{tikz-cd,tikz-3dplot}
\usepackage{enumitem}
\usepackage{soul}
\usepackage{mathrsfs}
\usepackage{subcaption}
\usepackage{float}
\usepackage{longtable}
\usepackage{array}

\usepackage[maxbibnames=99, style=numeric]{biblatex} 

\newtheorem{assumption}{Assumption}[section]
\newtheorem{definition}[assumption]{Definition}
\newtheorem{theorem}[assumption]{Theorem}
\newtheorem{proposition}[assumption]{Proposition}
\newtheorem{corollary}[assumption]{Corollary}
\newtheorem{lemma}[assumption]{Lemma}

\theoremstyle{remark}

\theoremstyle{remark}
\newtheorem{remark}[assumption]{Remark}

\newcommand{\Acal}{\mathcal{A}}

\newcommand{\Ccal}{\mathcal{C}}
\newcommand{\Dcal}{\mathcal{D}}
\newcommand{\Ecal}{\mathcal{E}}
\newcommand{\Fcal}{\mathcal{F}}
\newcommand{\Gcal}{\mathcal{G}}

\newcommand{\Ical}{\mathcal{I}}

\newcommand{\Lcal}{\mathcal{L}}
\newcommand{\Mcal}{\mathcal{M}}
\newcommand{\Ncal}{\mathcal{N}}

\newcommand{\Pcal}{\mathcal{P}}

\newcommand{\Scal}{\mathcal{S}}

\newcommand{\Vcal}{\mathcal{V}}
\newcommand{\Wcal}{\mathcal{W}}

\newcommand{\Prob}{\mathscr{P}}
\newcommand{\Mart}{\mathscr{M}}

\newcommand{\EE}{\mathbb{E}}

\newcommand{\NN}{\mathbb{N}}

\newcommand{\PP}{\mathbb{P}}

\newcommand{\RR}{\mathbb{R}}

\newcommand{\mean}{{\textup{mean}}}
\newcommand{\diam}{{\textup{diam}}}

\newcommand{\CDF}{{\textup{CDF}}}

\newcommand{\supp}{{\textup{supp}}}
\newcommand{\co}{{\textup{co}}}

\newcommand{\Mon}{{\operatorname{Mon}_{\uparrow}}}
\newcommand{\Law}{{\textup{Law}}}
\newcommand{\interior}{{\textup{int}}}
\newcommand{\MCov}{{\textup{MCov}}}

\numberwithin{equation}{section}

\makeatletter
\renewcommand{\p@enumii}{}            

\makeatother

\newcounter{stepimg}

\newcommand{\stepfig}[2]{%
  \refstepcounter{figure}\label{#1}\setcounter{stepimg}{0}%
  \begin{longtable}{|@{\hspace{.5em}}
  >{\centering\arraybackslash}p{.32\textwidth}@{\hfill}
  >{\centering\arraybackslash}p{.32\textwidth}@{\hfill}
  >{\centering\arraybackslash}p{.32\textwidth}
  @{\hspace{1em}}|}
  \hline
  \multicolumn{3}{|@{\hspace{.5em}}l@{\hspace{.5em}}|}{%
    \rule{0pt}{1.5em}\textbf{Figure \thefigure: #2}%
  }\\[0.5em]
  \hline
  \noalign{\vskip .25em}
}

\newcommand{\closestepfig}{%
  \hline
  \end{longtable}%
}

\begin{document}

\title{Fixed Points for the $q$-Bass Martingale:\\ Existence, Stability, and Convergence}
\author{Beatrice Acciaio\thanks{Department of Mathematics, ETH Z\"{u}rich, Switzerland. \emph{beatrice.acciaio@math.ethz.ch}}~~and~Antonio Marini\thanks{Department of Mathematics, ETH Z\"{u}rich, Switzerland. \emph{antonio.marini@math.ethz.ch}}}

\maketitle

\begin{abstract} We establish existence, uniqueness, stability, and convergence results for one-dimensional $q$-Bass martingales, characterized as the martingales with prescribed initial and terminal marginals whose transition kernels are closest to a reference measure $q$. Their existence is equivalent to the solvability of a fixed-point problem for probability distributions. Building on \cite{AcMa26_semidiscrete}, that requires the first marginal to be supported on finitely many points, we study the case of general marginals in convex order. Under the assumption that $q\ll\lambda$, we prove existence, uniqueness and stability of fixed-point distributions, $\Wcal_\infty$-convergence of the fixed-point iteration, and support-diameter estimates. We also extend the martingale Benamou--Brenier formula from Brownian motion to any additive reference process $X$ and show that the corresponding $X$-Bass martingale is optimal whenever it exists,  with an interpretation as an adapted Wasserstein projection of $X$.
 \end{abstract}

\section{Introduction}

For any probability measures  $\mu$ and $\nu$ in convex order, Strassen's theorem guarantees the existence of martingale couplings between them, though it does not provide an explicit construction, nor does it identify a particular coupling among the admissible ones.
The idea behind the $q$-Bass martingale studied in this paper
goes back to Bass' solution to the Skorokhod embedding problem \cite{Ba83}. 
Given a target distribution $\nu$, Bass' construction consists in transporting a Gaussian random variable monotonically to $\nu$ and then taking conditional expectations to obtain a martingale starting from a deterministic initial value and with the prescribed terminal distribution.
A suitable time change finally turns this martingale into a solution of the embedding problem.
This mechanism was subsequently adapted to the setting where the initial marginal $\mu$ is any distribution in convex order with the terminal one (rather than a Dirac measure) through the concept of stretched Brownian motion \cite{BaBeHuKa20}, where the Gaussian law serves as a reference for selecting the martingale dynamics. Replacing this Gaussian reference by an arbitrary probability measure $q$ leads to the broader framework considered in the present paper.

The resulting $q$-Bass construction selects, among martingales with prescribed initial and terminal marginals, one whose transition kernels deviate as little as possible from the reference measure $q$. In one dimension, the construction admits an explicit description in terms of monotone transport. Given a probability distribution $\alpha$, one first transports the convolution measure $\alpha\ast q$ monotonically to the target measure $\nu$ and then averages the corresponding transport map $T$ against $q$, obtaining the map $x\mapsto \int_\RR T(x+z)\,q(dz)$. If this averaged map pushes $\alpha$ forward to  the initial marginal $\mu$, the construction yields a martingale from $\mu$ to $\nu$. We call $\alpha$ a fixed-point distribution, or Bass distribution.
The importance of this construction lies in its optimal transport characterization. When the marginals have finite second moments, the $q$-Bass martingale is the unique optimizer of the weak martingale transport problem introduced in \cite{Ts24}. Equivalently, it minimizes, on average with respect to the initial marginal, the squared Wasserstein distance between its transition kernels and the reference measure $q$. The problem also admits a dual formulation in terms of convex potentials.
For a Gaussian reference measure, Hasenbichler--Joseph--Loeper--Obloj--Pammer established in \cite{HaJoLoObPa26} the existence of an optimizer when the prescribed marginals have finite $p$-th moments for some $p>1$. 

In the one-dimensional setting, the condition that the transport map averaged against $q$ pushes the Bass distribution forward to $\mu$ can be formulated as a fixed-point equation. This approach was initiated by Conze and Henry-Labordère \cite{CoHe21}, who introduced a fixed-point equation for the cumulative distribution function of the Bass distribution for computational purposes in the calibration of the Bass Local Volatility model. A formulation in terms of quantile functions was subsequently developed in \cite{AcMaPa23}. This quantile-based equation was then extended to arbitrary reference measures $q$ in \cite[Theorem~1.4]{AcMa26_semidiscrete}, which shows that a $q$-Bass martingale from $\mu$ to $\nu$ exists if and only if there is a distribution $\alpha$ whose quantile function satisfies the associated fixed-point equation, together with the required monotonicity condition. While this characterization is not restricted to discrete marginals, the existence theory developed in \cite{AcMa26_semidiscrete} concerns the case in which the initial marginal is supported on finitely many atoms. The present paper removes this restriction and develops the corresponding general theory. The formal definition of a $q$-Bass martingale, its variational characterization, and the fixed-point formulations are given in Section~\ref{sec:q-bass-construction}.

We now present the main contributions of the current paper. The first one concerns the existence of $q$-Bass martingales beyond the semidiscrete setting considered in \cite{AcMa26_semidiscrete}. More precisely, our first theorem removes the assumption in \cite[Theorem~1.9]{AcMa26_semidiscrete} that the initial marginal be supported on finitely many atoms. It also covers a class of non-irreducible pairs characterized by finitely many contact points between their integrated quantile functions.

\begin{theorem}[Existence of the $q$-Bass martingale]
\label{thm:q-bass-existence}
	Let $\mu,\nu\in\Prob_1(\RR)$ and let $q\in\Prob(\RR)$ satisfy $q\ll\lambda$.
    Assume that one of the following conditions holds:
	\begin{enumerate}[label=(\roman*)]
		\item \label{it:existence-1} $(\mu,\nu)$ is irreducible;
		\item \label{it:existence-2} $\mu\preceq_c\nu$,  $q$ is compactly supported, and the set $K:=\{u\in(0,1):U_\mu(u)=U_\nu(u)\}$ satisfies
		\begin{equation} \label{eq:compact-support-components}
			|K|<\infty, \qquad \{u\in K:Q_\nu(u^+)=Q_\mu(u^+)\text{ or }Q_\nu(u)=Q_\mu(u)\}=\emptyset.
		\end{equation}
	\end{enumerate}
	Then there exists a $q$-Bass martingale from $\mu$ to $\nu$.
\end{theorem}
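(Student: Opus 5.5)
Case \ref{it:existence-1} will be reduced to the semidiscrete theory by approximation. Using the characterization \cite[Theorem~1.4]{AcMa26_semidiscrete}, it suffices to produce a probability measure $\alpha$ satisfying two conditions. First, its quantile function must solve the fixed-point equation
\[
Q_\mu(u)=\int_\RR T_\alpha\bigl(Q_\alpha(u)+z\bigr)\,q(dz),\qquad T_\alpha:=Q_\nu\circ F_{\alpha\ast q}.
\]
Second, the averaged map must satisfy the required monotonicity condition. I will approximate $\mu$ by finitely supported measures $\mu_n$, taken as quantile discretizations $Q_{\mu_n}$ obtained by averaging $Q_\mu$ over the dyadic intervals of level $n$. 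Then $\mu_n\preceq_c\mu\preceq_c\nu$ and $\mu_n\to\mu$ in $\Wcal_1$. By irreducibility, $U_{\mu}<U_\nu$ on $(0,1)$, so $U_{\mu_n}\le U_\mu<U_\nu$ and each pair $(\mu_n,\nu)$ is irreducible. \cite[Theorem~1.9]{AcMa26_semidiscrete} then yields Bass distributions $\alpha_n$ for $(\mu_n,\nu)$. Since the problem is translation invariant in $\alpha$, I normalize each $\alpha_n$, for instance by fixing a median of $\alpha_n$ at $0$.

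The core step, and the one I expect to be hardest, is tightness of $(\alpha_n)$. The relevant notion is tightness of the quantiles $Q_{\alpha_n}$ on compact subsets of $(0,1)$; full $\Wcal_1$ control is not needed. The idea is a support-diameter or spread estimate. Suppose $Q_{\alpha_n}(u_2)-Q_{\alpha_n}(u_1)\to\infty$ for some fixed $u_1<u_2$. Then, because $q$ is a fixed probability measure, the averaged map evaluated at $Q_{\alpha_n}(u_1)$ and at $Q_{\alpha_n}(u_2)$ would see the mass of $\alpha_n\ast q$ split. Consequently $\int_0^{u}Q_{\mu_n}$ would approach $\int_0^{u}Q_\nu$ at some intermediate level $u$, that is, $U_{\mu_n}(u)-U_\nu(u)\to0$. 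Since $U_{\mu_n}\le U_\mu$ and $U_\nu-U_\mu$ is bounded below by a positive constant on compacts, this contradicts irreducibility. Concretely, I would compare $\int_0^u Q_{\mu_n}=\int_0^u\!\int T_{\alpha_n}(Q_{\alpha_n}+z)\,q(dz)\,dv$ with $\int_0^{F_{\alpha_n\ast q}(\cdot)}Q_\nu$. The comparison uses the fact that averaging a monotone map over a translate smooths the quantile, and that the amount of smoothing vanishes as the gap grows. The assumption $q\ll\lambda$ enters here: it makes $F_{\alpha\ast q}$ continuous, so $T_\alpha$ is well defined and depends continuously on $\alpha$.

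With tightness in hand, I extract a subsequence along which $Q_{\alpha_n}\to Q_\alpha$ at continuity points, for some limit $\alpha\in\Prob(\RR)$. Because $q\ll\lambda$, $\alpha_n\ast q\to\alpha\ast q$ with continuous CDFs, so $F_{\alpha_n\ast q}\to F_{\alpha\ast q}$ locally uniformly and $T_{\alpha_n}\to T_\alpha$ pointwise away from the countably many jump points of $Q_\nu$. The fixed-point identity then passes to the limit by dominated convergence, with uniform integrability supplied by $\nu\in\Prob_1$. Monotonicity of the averaged map is a closed condition and is preserved. Applying \cite[Theorem~1.4]{AcMa26_semidiscrete} concludes case \ref{it:existence-1}.

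For case \ref{it:existence-2}, I decompose $(\mu,\nu)$ into its irreducible components, which are determined by the finitely many contact points $K$. Every martingale coupling must respect this decomposition, so it suffices to glue $q$-Bass martingales on each component. Each component pair is irreducible, so case \ref{it:existence-1} gives a Bass distribution $\alpha_i$ on each piece. The nondegeneracy condition in \eqref{eq:compact-support-components} ensures that no atom of either marginal sits on a contact boundary, so each component is a genuine irreducible pair with no mass shared across the boundary. The subtlety is that a single $\alpha$ with a global convolution must reproduce each $\alpha_i$ locally. This is where compact support of $q$ is used: I translate the pieces $\alpha_i$ far apart, by more than $\diam(\supp q)$ plus the widths needed. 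Then $\alpha\ast q$ restricted to each block equals $\alpha_i\ast q$ up to a shift, and the global monotone map coincides blockwise with the component maps. Checking that these separations are compatible with the monotone ordering of the blocks is the remaining bookkeeping.
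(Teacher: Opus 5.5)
There are two genuine gaps, one in each case. Your architecture is otherwise the paper's:

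\begin{itemize}
\item discretize $Q_\mu$ (the paper interpolates $U_\mu$ on the grid $i/k$, which is your averaging);
\item solve the semidiscrete problems via \cite[Theorem~1.9]{AcMa26_semidiscrete} and normalize;
\item extract a limit and use irreducibility to prevent escape to infinity;
\item in case (ii), glue component solutions separated by $\diam(\supp q)$.
\end{itemize}

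A minor slip: with $U_\eta(p)=\int_0^pQ_\eta$, convex order reads $U_\mu\ge U_\nu$, and your averages satisfy $U_{\mu_n}\ge U_\mu$. Your inequalities are reversed, but harmlessly.

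\textbf{Case (i): strict monotonicity is not closed.} Your claim that monotonicity of the averaged map ``is a closed condition'' fails for the condition actually required. Since $T_\alpha$ is non-decreasing, $q\star T_\alpha$ is always non-decreasing. What \cite[Theorem~1.4]{AcMa26_semidiscrete} demands is that it be \emph{strictly} increasing on an $\alpha$-full set, and strictness is not preserved under pointwise limits. Concretely, the limit $Q_\alpha$ could be non-constant on the level set $\{Q_\mu=x\}$ of an atom $x$ of $\mu$. Then $(q\star T_\alpha)\circ Q_\alpha=Q_\mu$ forces $q\star T_\alpha$ to be flat between two points of $\supp\alpha$.

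The missing idea is to show that the limit does not split atoms of $\mu$:
\begin{itemize}
\item each $Q_{\alpha_n}$ is a.e.\ constant on the level sets of $Q_{\mu_n}$, which is exactly what strict monotonicity gives at the semidiscrete level;
\item for your averages, $Q_{\mu_n}$ is constant on every compact subinterval of the interior of a level set of $Q_\mu$ once $n$ is large;
\item hence the pointwise limit $Q_\alpha$ is a.e.\ constant on each level set of $Q_\mu$.
\end{itemize}
Then, for $u,v$ in a full-measure set, $Q_\alpha(u)<Q_\alpha(v)$ forces $u<v$ and $Q_\mu(u)<Q_\mu(v)$. This gives strict monotonicity of $q\star T_\alpha$ on an $\alpha$-full set.

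Your tightness step is also only heuristic. A growing spread $Q_{\alpha_n}(u_2)-Q_{\alpha_n}(u_1)$ does not by itself identify the level at which $U_{\mu_n}$ approaches $U_\nu$. The paper avoids an a priori bound altogether. It takes an $\overline{\RR}$-valued pointwise limit (Helly applied to $\arctan Q_{\alpha_n}$), and lets $u_1^*$ (resp.\ $u_2^*$) be the boundary of the region where the limit equals $-\infty$ (resp.\ $+\infty$). It then proves $\Mcal(Q_{\alpha_n})(u_1^*)\to U_\nu(u_1^*)$ using the integrated form of the equation, whose inner variable has density at most $1/u_1^*$. This contradicts $U_{\mu_n}(u_1^*)\to U_\mu(u_1^*)>U_\nu(u_1^*)$ unless $u_1^*=0$, and symmetrically $u_2^*=1$.

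\textbf{Case (ii): the role of the nondegeneracy condition.} You misread what \eqref{eq:compact-support-components} does, and the step it serves is missing. At $u\in K$, the function $U_\mu-U_\nu\ge0$ attains its minimum $0$, so $Q_\mu(u)\le Q_\nu(u)$ and $Q_\mu(u^+)\ge Q_\nu(u^+)$. The condition makes both inequalities strict. In other words, the component pairs adjacent to $u$ have different endpoint quantiles on the side facing $u$; it does not exclude an atom of $\nu$ at the boundary point.

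Its purpose is boundedness. Letting $u\to1^-$ (or $0^+$) in the fixed-point equation shows that a Bass quantile can diverge at an endpoint only if $Q_\mu$ and $Q_\nu$ have the same limit there (Proposition~\ref{prop:compact-support-fixed-point}). So the component Bass distributions $\alpha_i$ are bounded above for $i\le|K|$ and bounded below for $i\ge2$.

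Your translation argument needs exactly this. If $\alpha_i$ were unbounded above, no finite shift could place $\alpha_{i+1}$ at distance $\diam(\supp q)$ to its right, and the ``widths needed'' would be infinite. This is not hypothetical: when the endpoint quantiles coincide and $Q_\nu$ is strictly increasing there, every Bass distribution is unbounded at that end, even for compactly supported marginals.

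With boundedness in hand, the shifts $c_{i+1}=c_i+Q_i^*(1^-)-Q_{i+1}^*(0^+)+\diam(\supp q)$ decouple the blocks as you describe. Strict monotonicity across blocks then follows from $Q_\mu(u)<Q_\mu(u^+)$ for $u\in K$.
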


The proof is given in Section~\ref{sec:existence-uniqueness}. It builds on the semidiscrete existence result of \cite{AcMa26_semidiscrete} by approximating the initial marginal with measures supported on finitely many atoms and applying a generalization of Helly's selection theorem to pass to the limit. In particular, no regularity of the marginals beyond finite first moments is required, and absolute continuity is the only regularity imposed on the reference measure.

The uniqueness and stability results are obtained under the conditions collected in Assumptions~\ref{ass:technical-assumptions} and \ref{ass:technical-assumptions-2}, stated in Section~\ref{sec:q-bass-construction}. The role of Assumption~\ref{ass:technical-assumptions-2} is clarified by the geometric analysis developed in Section~\ref{sec:support}. There, we characterize when a fixed-point distribution is compactly supported and introduce the $r$-maximal curves (see Definition~\ref{def:r-maximal-curve}), which provide lower bounds on the diameter of its support in terms of $\mu$, $\nu$, and $q$. In particular, Assumption~\ref{ass:technical-assumptions-2} guarantees that every fixed-point distribution is compactly supported, a property used in the uniqueness, stability, and convergence results.

The next result extends the uniqueness shown in \cite[Theorem~1.12]{AcMa26_semidiscrete} from finitely supported initial marginals to the general setting. Its proof is given in Section~\ref{sec:existence-uniqueness}.

\begin{theorem}[Uniqueness of the fixed-point solution]
\label{thm:uniqueness-fixed-point-sol}
Let $\mu,\nu\in\Prob_1(\RR)$ and $q\in\Prob(\RR)$ be such that Assumptions~\ref{ass:technical-assumptions} and \ref{ass:technical-assumptions-2} hold and $(\mu,\nu)$ is irreducible. Then the fixed-point solution is unique up to translation.
\end{theorem}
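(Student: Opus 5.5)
The plan is to reduce uniqueness for general $\mu$ to the semidiscrete uniqueness result \cite[Theorem~1.12]{AcMa26_semidiscrete}. The main tool is the variational characterization of the $q$-Bass martingale as the optimizer of a strictly convex problem, or equivalently its dual formulation in terms of convex potentials.

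\textbf{Step 1 (uniqueness of the martingale).} Let $\alpha_1,\alpha_2$ be two fixed-point distributions, with associated monotone maps $T_i$ pushing $\alpha_i\ast q$ to $\nu$. By the equivalence recalled in Section~\ref{sec:q-bass-construction}, each $\alpha_i$ induces a $q$-Bass martingale $M^i$ from $\mu$ to $\nu$. Assumption~\ref{ass:technical-assumptions-2} makes every fixed-point distribution compactly supported. Combined with Assumption~\ref{ass:technical-assumptions}, this should give enough integrability for the weak martingale transport problem of \cite{Ts24} to be finite. I would then invoke strict convexity of the cost $\int \Wcal_2^2(\pi_x,q)\,\mu(dx)$ in the kernel $\pi_x$ to conclude that the couplings $\Law(M^1_0,M^1_1)$ and $\Law(M^2_0,M^2_1)$ coincide. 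If only first moments of $\mu,\nu$ are available, I would instead use the dual potentials, which are well defined thanks to the compact support of $\alpha_i$.

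\textbf{Step 2 (from the coupling to $\alpha$).} Let $F_i(x)=\int T_i(x+z)\,q(dz)$ be the averaged map. The kernel of $M^i$ at $F_i(x)$ is $(T_i)_\#(\delta_x\ast q)$, and $F_i$ is strictly increasing on $\supp\alpha_i$. Hence the common kernel $\pi_y$ equals $T_1(x_1+\cdot)_\#q=T_2(x_2+\cdot)_\#q$ for $y=F_1(x_1)=F_2(x_2)$, so
\[
Q_{\pi_y}(u)=T_1\bigl(x_1+Q_q(u)\bigr)=T_2\bigl(x_2+Q_q(u)\bigr).
\]
Since $q\ll\lambda$, we have $\mathrm{Var}(\pi_y)>0$ and the support of $\pi_y$ is an interval structure inherited from $T_i$. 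Comparing these relations for two values $y<y'$ should show that $x_2-x_1$ is independent of $y$. Irreducibility of $(\mu,\nu)$ is essential here: it prevents the martingale from splitting into independent components, each of which could be translated separately. The argument is to show that the function $y\mapsto x_2(y)-x_1(y)$ is locally constant, via overlap of the supports of neighbouring kernels (irreducibility forces overlaps), and therefore globally constant on the connected hull of $\supp\mu$. This gives $\alpha_2=\alpha_1(\cdot-c)$.

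\textbf{Main obstacle.} I expect Step 2 to be the hard part, namely making the overlap argument rigorous for general $\mu$ with atoms and gaps. As a fallback I would approximate. Take discrete $\mu_n\to\mu$ preserving irreducibility and the assumptions, and use the unique (up to translation) semidiscrete fixed points $\alpha^n$ from \cite[Theorem~1.12]{AcMa26_semidiscrete}. Then use stability in $\Wcal_\infty$, with uniformly bounded supports via the support-diameter estimates of Section~\ref{sec:support}, to argue that any fixed point of $(\mu,\nu)$ is a limit of translates of $\alpha^n$. The delicate point in this route is showing that every fixed point, and not only one constructed as a limit, arises this way. That would again require a contraction or monotonicity property of the fixed-point map, normalized by fixing the mean.
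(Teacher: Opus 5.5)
Your route differs genuinely from the paper's and can be completed. As written, however, both steps rest on claims you have not justified, and the approximation fallback should be dropped: it is circular, since showing that every fixed point is such a limit is essentially the uniqueness claim.

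\textbf{Step 1.} (A3) allows $a<2$, so $\nu\notin\Prob_2(\RR)$ is possible. Then $\int\Wcal_2^2(\pi_x,q)\,\mu(dx)=+\infty$ for every $\pi\in\Mart(\mu,\nu)$, and \cite{Ts24} gives nothing. Your parenthetical switch to dual potentials is the right fix, but it has to be carried out.
\begin{itemize}
\item Work with $\Scal$, which is finite on $\Mart(\mu,\nu)$ by H\"older because $b>a/(a-1)$.
\item Set $\varphi(w):=\int_0^wT(s)\,ds$ and $\psi:=\varphi^*$. Fenchel--Young and $\mean(\pi_x)=x$ give $\Scal(\pi)\le\int\psi\,d\nu-\int(\varphi\star q)^*\,d\mu$ for every $\pi\in\Mart(\mu,\nu)$.
\item Equality holds for the Bass coupling: Fenchel--Young is an equality at $y=T(a+z)$, and $x'=a$ attains $\inf_{x'}[(\varphi\star q)(x')-x'x]$ when $x=(q\star T)(a)$. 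Integrability follows from H\"older and the compact support of $\alpha$ given by (A5)--(A6).
\item Both Bass couplings are therefore optimal. They coincide because $\eta\mapsto\MCov(\eta,q)$ is strictly concave under mixtures when $q$ is atomless, the optimal coupling being the comonotone one induced by a map of $z$.
\end{itemize}

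\textbf{Step 2.} The mechanism you are missing is the following.
\begin{itemize}
\item For $\mu$-a.e.\ $y$, the maps $T_1(x_1(y)+\cdot)$ and $T_2(x_2(y)+\cdot)$ are non-decreasing and both push $q$ to $\pi_y$. So they agree $q$-a.e., and by (A2) $T_1(w)=T_2(w+c(y))$ for a.e.\ $w\in x_1(y)+\interior(\supp(q))$, where $c(y):=x_2(y)-x_1(y)$.
\item Irreducibility gives $\Mcal(Q_{\alpha_i})=U_\mu>U_\nu$ on $(0,1)$. Combined with Proposition~\ref{prop:Bass_operator_properties}(i), every jump of $Q_{\alpha_i}$ is strictly smaller than $\diam(\supp(q))$.
\item Hence $\supp(\alpha_2\ast q)$ is an interval, and $T_2=Q_\nu\circ F_{\alpha_2\ast q}$ is strictly increasing on its interior.
\item Two overlapping windows therefore force $c(y)=c(y')$. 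The jump bound lets you join any two points of a dense, $\alpha_1$-full subset of $\supp(\alpha_1)$ by a finite chain of overlapping windows.
\end{itemize}
The chaining must be done in the $\alpha$-variable. $\supp(\mu)$ can be disconnected even for irreducible pairs (e.g.\ $\mu$ atomic), so \emph{locally constant on the hull of $\supp(\mu)$} is not the right formulation.

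\textbf{Comparison with the paper.} The paper never passes through the martingale coupling. It introduces $\Vcal(Q)=\int_\RR U_\nu\bigl(\int_0^1F_q(z-Q(v))\,dv\bigr)\,dz$ on $L^\infty(0,1)$ and shows it is twice Fr\'echet differentiable, with $\Mcal(Q)(p)=-D_Q\Vcal(\mathbf 1_{[0,p]})$ and $D_Q^2\Vcal(f,f)=-\frac12\int|f(u)-f(w)|^2K_Q(u,w)\,du\,dw$. From this it deduces strict monotonicity of $Q\mapsto D_Q\Vcal$ modulo constants on the convex set $\Dcal$ of bounded monotone $Q$ with jumps below $\diam(\supp(q))$. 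Irreducibility enters exactly as in your Step~2, to place fixed-point quantiles in $\Dcal$.

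What the paper's route buys is more than uniqueness for one pair $(\mu,\nu)$: $\Mcal$ is injective modulo constants on all of $\Dcal$, the infinite-dimensional analogue of the semidiscrete gradient structure. The price is (A4), needed to control the second derivative. Your decomposition into primal uniqueness plus rigidity of the representation uses only first-order information and, once repaired as above, does not use (A4). It also makes transparent why irreducibility is needed: a jump of size at least $\diam(\supp(q))$ decouples the windows and permits independent translations of components, as in Step~2 of the proof of Theorem~\ref{thm:general-existence-convex-param}.
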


 The stability result below complements the finite-dimensional stability of the atomic $q$-Bass maps established in \cite[Proposition~2.12]{AcMa26_semidiscrete}. Its proof is given in Section~\ref{sec:stability}.

\begin{theorem}[Stability of the fixed-point solution]
\label{thm:fixed-point-stability}
Let $\mu,\nu\in\Prob_1(\RR)$ and $q\in\Prob(\RR)$ be such that Assumptions~\ref{ass:technical-assumptions} and \ref{ass:technical-assumptions-2} hold and $(\mu,\nu)$ is irreducible. Let $(\mu_k)_{k\in\NN},(\nu_k)_{k\in\NN}\subseteq\Prob_1(\RR)$ and $(q_k)_{k\in\NN}\subseteq\Prob(\RR)$ satisfy
\[
\mu_k\preceq_c\nu_k \qquad\text{and}\qquad q_k\ll\lambda, \qquad \text{for all } k\in\NN.
\]
Assume moreover that
\[
U_{\mu_k}\to U_\mu \quad\text{pointwise on }(0,1),
\]
\[
Q_{\nu_k}\to Q_\nu \quad\text{pointwise on }(0,1),
\]
that there exists $H\in L^1(0,1)$ such that $|Q_{\nu_k}|\leq H$ on $(0,1)$, for all $k\in\NN$, and that
\[
\rho_{q_k}\to\rho_q \quad\text{in }L^1(\RR).
\]
Finally, for each $k\in\NN$, assume that there exists a fixed-point distribution $\alpha_k\in\Prob(\RR)$ associated with a $q_k$-Bass martingale from $\mu_k$ to $\nu_k$, normalized by $Q_{\alpha_k}(1/2)=0$. Then $(\alpha_k)_{k\in\NN}$ converges weakly to the fixed-point distribution $\alpha$ associated with the $q$-Bass martingale from $\mu$ to $\nu$ and normalized by $Q_\alpha(1/2)=0$.
\end{theorem}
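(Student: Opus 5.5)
The argument is a compactness-plus-uniqueness scheme. I will show that $(\alpha_k)_{k\in\NN}$ is tight and that every subsequential weak limit is a fixed-point distribution for $(\mu,\nu,q)$ normalized by $Q(1/2)=0$. Theorem~\ref{thm:uniqueness-fixed-point-sol} then identifies every such limit with $\alpha$, which gives convergence of the whole sequence.

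Recall that $\alpha_k$ is characterized by the quantile fixed-point equation of \cite[Theorem~1.4]{AcMa26_semidiscrete}. Writing $T_k$ for the monotone map from $\alpha_k\ast q_k$ to $\nu_k$, the equation reads
\[
Q_{\mu_k}(u)=\int_\RR T_k\bigl(Q_{\alpha_k}(u)+z\bigr)\,q_k(dz),\qquad T_k=Q_{\nu_k}\circ F_{\alpha_k\ast q_k},
\]
and the averaged map is nondecreasing. The steps are as follows.

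\emph{Step 1: tightness.} I need uniform control of $Q_{\alpha_k}(u)$ for $u$ in compact subsets of $(0,1)$. Suppose instead that, along a subsequence, $Q_{\alpha_k}(u_0)\to+\infty$ for some $u_0>1/2$. Since $Q_{\alpha_k}(1/2)=0$, the mass of $\alpha_k$ in $[0,Q_{\alpha_k}(u_0)]$ spreads over arbitrarily long intervals. The convergence $\rho_{q_k}\to\rho_q$ in $L^1$ makes $(q_k)$ uniformly tight and uniformly non-atomic. I would use this to show that $\alpha_k\ast q_k$ has a flat region of the CDF in the limit, so that $T_k$, averaged against $q_k$, produces values of $Q_{\mu_k}$ near $u_0$ that are asymptotically equal to the $\nu$-barycentre of an interval of quantile levels $[a,b]$ with $a<b$. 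That would force $U_\mu(a)=U_\nu(a)$ or a similar contact, contradicting irreducibility. The contact condition passes to the limit through $U_{\mu_k}\to U_\mu$, and through $Q_{\nu_k}\to Q_\nu$ with the domination by $H$.

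This is where I expect the main difficulty. I would try to organize it using the support-diameter/$r$-maximal curve estimates of Section~\ref{sec:support} together with Assumption~\ref{ass:technical-assumptions-2}, which should give compact support of the limit and uniform bounds. I would also follow the analogous semidiscrete stability argument of \cite[Proposition~2.12]{AcMa26_semidiscrete}.

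\emph{Step 2: passage to the limit.} By Helly's selection theorem, as used in the proof of Theorem~\ref{thm:q-bass-existence}, pass to a subsequence with $Q_{\alpha_k}\to Q_\beta$ at continuity points, where $\beta$ is a probability measure by Step 1. Then:
\begin{itemize}
\item $\alpha_k\ast q_k\to\beta\ast q$ weakly. The limit is absolutely continuous, so $F_{\alpha_k\ast q_k}\to F_{\beta\ast q}$ uniformly.
\item $T_k(x)\to T(x)$ for a.e.\ $x$, using $Q_{\nu_k}\to Q_\nu$ pointwise and monotonicity.
\item The integral $\int T_k(Q_{\alpha_k}(u)+z)\,q_k(dz)$ converges. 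Here I change variables to $\int_0^1 Q_{\nu_k}(v)\,K_k(u,dv)$ and use uniform integrability from $|Q_{\nu_k}|\le H$ together with the $L^1$ convergence of the densities.
\end{itemize}
On the left-hand side, pointwise convergence of $U_{\mu_k}$ gives convergence of $Q_{\mu_k}$ at continuity points, by convexity of $U$. Hence $\beta$ satisfies the fixed-point equation for $(\mu,\nu,q)$, the monotonicity condition is preserved in the limit, and $Q_\beta(1/2)=0$; if $1/2$ is a jump point, I would handle it by using right/left limits and the translation freedom.

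\emph{Step 3: identification.} By Theorem~\ref{thm:uniqueness-fixed-point-sol}, $\beta$ is a translate of $\alpha$, and the common normalization gives $\beta=\alpha$. Since every subsequence has a further subsequence converging weakly to $\alpha$, the whole sequence $(\alpha_k)_{k\in\NN}$ converges weakly to $\alpha$.
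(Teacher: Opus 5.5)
Your architecture is the same as the paper's: subsequential compactness, passage to the limit in the fixed-point relation, identification through Theorem~\ref{thm:uniqueness-fixed-point-sol}, and the subsequence principle. The gap is in Step~1, which carries the whole weight. It is only a heuristic, and the tools you propose cannot close it.

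\textbf{Why your Step~1 tools fail.} Proposition~\ref{prop:maximal-curves-bound} gives only \emph{lower} bounds on $\diam(\supp(\alpha))$. Proposition~\ref{prop:compact-support-fixed-point} gives compactness of a single fixed point under Assumption~\ref{ass:technical-assumptions-2}, with no bound uniform in $k$. That assumption is not even imposed on $(\mu_k,\nu_k)$. Neither result excludes $Q_{\alpha_k}(u_0)\to\pm\infty$, and compact support of the limit is not the issue; existence of a finite limit is. Your pointwise picture also does not say at which level the contact $U_\mu=U_\nu$ should appear.

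\textbf{The missing idea.} Do not try to prove tightness directly.
\begin{enumerate}
\item Use Proposition~\ref{prop:helly-selection-thm} to extract a limit $Q^*\in\Mon((0,1),\overline\RR)$. It equals $-\infty$ on $(0,u_1^*)$, is finite on $(u_1^*,u_2^*)$, and equals $+\infty$ on $(u_2^*,1)$, with $u_1^*\le 1/2\le u_2^*$ by the normalization.
\item Test the \emph{integrated} relation $\Mcal_k(Q_{\alpha_k})=U_{\mu_k}$ exactly at the boundary level $u_1^*$ (resp.\ $u_2^*$). Write $\Mcal_k(Q_k)(p)=p\,\EE[Q_{\nu_k}(T_{k,p})]$. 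The divergence decouples the block $(0,p)$ from $(p,1)$, so $T_{k,p}$ becomes asymptotically $\mathrm{Unif}_{[0,p]}$. Hence $\Mcal_k(Q_{\alpha_k})(u_1^*)\to U_\nu(u_1^*)$, by Proposition~\ref{prop:q-Bass-operator-stability}\ref{it:q-Bass-operator-stability-2}--\ref{it:q-Bass-operator-stability-3}.
\item By irreducibility, $U_{\mu_k}(u_1^*)\to U_\mu(u_1^*)>U_\nu(u_1^*)$. This is a contradiction unless $u_1^*=0$ and $u_2^*=1$. So $Q^*$ is real-valued, which is precisely tightness.
\end{enumerate}

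\textbf{Step~2 needs the same integrated formulation.} For fixed $u$, your kernel $K_k(u,dv)$ is the law of $F_{\alpha_k\ast q_k}(Q_{\alpha_k}(u)+Z_k)$. At $v=F_{\alpha_k\ast q_k}(y)$ it has density $\rho_{q_k}(y-Q_{\alpha_k}(u))/\rho_{\alpha_k\ast q_k}(y)$, which in general is not uniformly bounded. So $|Q_{\nu_k}|\le H\in L^1(0,1)$ does not give the uniform integrability you invoke. Averaging over $u\in(0,p)$ produces a variable $T_{k,p}$ with density at most $1/p$, and then $L^1$-domination suffices (Proposition~\ref{prop:q-Bass-operator-stability}\ref{it:q-Bass-operator-stability-1}). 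This gives $\Mcal(Q^*)=U_\mu$ directly. You need not pass to the limit in $Q_{\mu_k}$ or in the monotonicity condition, because uniqueness of solutions of \eqref{eq:fixed_point_new_eq} already does the identification.

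\textbf{The normalization at $1/2$.} Your concern about $1/2$ being a jump point is justified, but translation freedom cannot resolve it, because the normalization is prescribed. Suppose $Q_\mu$ jumps at $1/2$; this forces a jump of $Q_\alpha$ there, since $S_{Q_\alpha}$ is continuous. If the jumps of $Q_{\mu_k}$ sit slightly to the left of $1/2$, the normalized $\alpha_k$ can converge to a nontrivial translate of $\alpha$. The paper passes $Q_{\alpha_k}(1/2)=0$ to the left-continuous limit without comment. So this step needs $1/2$ to be a continuity point of $Q_\alpha$, or a different normalization.
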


In Section~\ref{sec:convergence}, we combine the $\Wcal_\infty$-non-expansiveness of the fixed-point operator, established in \cite{AcMaPa23} through a detailed analysis of its regularity, with the dual-descent argument of \cite{HaJoLoObPa26}. This yields $\Wcal_\infty$-convergence of the fixed-point iteration to a fixed-point distribution in the compact setting; under additional assumptions, we further prove linear convergence of the fixed-point algorithm. In particular, we obtain the following result; see Section~\ref{sec:convergence} for the proof.

\begin{theorem}[Convergence of the fixed-point iteration]
\label{thm:convergence}
Let $\mu,\nu\in\Prob_\infty(\RR)$ and $q\in\Prob_1(\RR)$ satisfy $q\ll\lambda$ and Assumption~\ref{ass:technical-assumptions-2}. Denote by $\Lcal^\ast$ the set of fixed-point distributions and assume that $\Lcal^\ast\neq\emptyset$. Let $\alpha_0\in\Prob_\infty(\RR)$, denote its quantile function by $Q_0$, and define recursively
\[
Q_{k+1}:=\Gcal_qQ_k,\qquad k\in\NN,
\]
where $\Gcal_q$ is the operator defined in Definition~\ref{def:G_q}. For each $k\in\NN$, let $\alpha_k$ be the probability measure with quantile function $Q_k$. Then, for every $p\in[1,\infty)$,
\[
\Wcal_p(\alpha_k,\Lcal^\ast)\longrightarrow0,
\]
where $\Wcal_s(\eta,\Lcal^\ast):=\inf_{\bar\alpha\in\Lcal^\ast}\Wcal_s(\eta,\bar\alpha)$ for $s\in[1,\infty]$ and $\eta\in\Prob_\infty(\RR)$.

Assume, in addition, that one of the following conditions hold:
\begin{enumerate}[label=(\roman*)]
    \item \label{it:convergence-assumption-1} $q$ is equivalent to the Lebesgue measure;
    \item \label{it:convergence-assumption-2} the supports of $q$, $\mu$, and $\alpha_0$ are intervals, and $\mu$ and $\nu$ are absolutely continuous.
\end{enumerate}
Then
\[
\Wcal_\infty(\alpha_k,\Lcal^\ast)\longrightarrow0.
\]

Moreover, if Assumption~\ref{ass:linear-convergence} holds, then there exist $\theta\in(0,1)$ and $\widetilde\alpha\in\Lcal^\ast$ such that
\begin{enumerate}[label=(\alph*)]
    \item \label{it:strong-conv-1} for every $k\in\NN$,
    \[
    \Wcal_\infty(\alpha_{k+1},\Lcal^\ast)\leq\theta\,\Wcal_\infty(\alpha_k,\Lcal^\ast);
    \]
    \item \label{it:strong-conv-2} for every $k\in\NN$,
    \[
    \Wcal_\infty(\alpha_{k+1},\widetilde\alpha)\leq2\theta^k\Wcal_\infty(\alpha_1,\widetilde\alpha).
    \]
\end{enumerate}
\end{theorem}

Finally, in Section~\ref{sec:dynamic}, we investigate a continuous-time counterpart of the $q$-Bass construction, in which only the initial and terminal marginals are prescribed. Given an additive reference process $X$, we introduce the notion of an $X$-Bass martingale and observe that, if $X_1\sim q$, then, for every $X$-Bass martingale $(M_t)_{t\in[0,1]}$ from $\mu$ to $\nu$, the joint law $\Law(M_0,M_1)$ is a $q$-Bass martingale from $\mu$ to $\nu$. Using the martingale representation in the filtration generated by $X$, we derive dynamic formulations of the optimization problems \eqref{def:WT^q_S} and \eqref{def:WT^q_I}. These formulations compare separately the continuous and jump characteristics of an admissible martingale with those of the reference process. We then prove that, whenever it exists, the $X$-Bass martingale solves the corresponding dynamic martingale optimal transport problem. In Section~\ref{sec:aw-projection}, we complement this characterization by showing that its law is the adapted Wasserstein projection of $\Law(X)$ onto the set of martingale laws with prescribed initial and terminal marginals.

\paragraph{Notation.}
We denote by $\Prob(\RR)$ the set of probability measures on $\RR$, by $\Prob_p(\RR)$, $p\in[1,\infty)$, the subset of measures with finite $p$-moment, and by $\Prob_\infty(\RR)$ the subset of compactly supported probability measures. The standard Gaussian law and its density are denoted by $\gamma$ and $\phi$, respectively, while $\lambda$ denotes the Lebesgue measure on $\RR$. For $\xi\in\Prob(\RR)$, we write $\xi\ll\lambda$ when $\xi$ is absolutely continuous with respect to $\lambda$, and in this case denote its density by $\rho_\xi$. We write $\supp(\xi)$ and $F_\xi$ for the support and cumulative distribution function of $\xi$, respectively. Its left-continuous quantile function is denoted by $Q_\xi:(0,1)\to\RR$ and defined by $Q_\xi(u):=\inf\{y\in\RR:F_\xi(y)\geq u\}$. We extend it to $[0,1]$ by setting $Q_\xi(0):=\inf(\supp(\xi))$ and $Q_\xi(1):=\sup(\supp(\xi))$. If $\xi\in\Prob_1(\RR)$, we set $\mean(\xi):=\int_\RR y\,\xi(dy)$. We use $\CDF$ both as an abbreviation for cumulative distribution function and to denote the class of all cumulative distribution functions on $\RR$. We denote by $\CDF_c \subseteq \CDF$ the subclass corresponding to compactly supported probability measures.

For $\mu,\nu\in\Prob(\RR)$, we denote by $\Pi(\mu,\nu)$ the set of probability measures on $\RR\times\RR$ with first marginal $\mu$ and second marginal $\nu$, and its elements are called couplings of $\mu$ and $\nu$. For $p\in[1,\infty)$ and $\xi,\zeta\in\Prob_p(\RR)$, the $p$-Wasserstein distance between $\xi$ and $\zeta$ is defined as
\[
\Wcal_p(\xi,\zeta):=\inf_{\pi\in\Pi(\xi,\zeta)}\left(\int_{\RR\times\RR}|x-y|^p\,\pi(dx,dy)\right)^{1/p}.
\]
Given $\pi\in\Pi(\mu,\nu)$, we write $\pi(dx,dy)=\mu(dx)\pi_x(dy)$ for its regular conditional disintegration with respect to $\mu$. We denote by $\Mart(\mu,\nu)$ the subset of couplings satisfying $\mean(\pi_x)=x$ for $\mu$-a.e.\ $x$, and call its elements martingale couplings of $\mu$ and $\nu$.

For a measurable map $T:\RR\to\RR$ and $\xi\in\Prob(\RR)$, the push-forward of $\xi$ under $T$ is denoted by $T_\#\xi$ and defined by $T_\#\xi(A):=\xi(T^{-1}(A))$ for every Borel set $A\subseteq\RR$. For $\xi,\zeta\in\Prob(\RR)$, their convolution is the probability measure $\xi\ast\zeta$ given by $(\xi\ast\zeta)(A):=\int_{\RR\times\RR}1_A(x+y)\,\xi(dx)\zeta(dy)$. For measurable functions $f,g$, we set $f\ast g(x):=\int_\RR f(x-y)g(y)\,dy$, and for a measurable function $f$ and $\xi\in\Prob(\RR)$ we define $\xi \ast f(x):=\int_\RR f(x-y)\,\xi(dy)$ and $\xi \star f(x):=\int_\RR f(x+y)\,\xi(dy)$, whenever these quantities are well-defined.

For $A\subseteq\RR$ and $p\in[1,\infty)$, $L^p(A)$ denotes the space of $\lambda$-measurable functions $f:A\to\RR$ such that $|f|^p$ is $\lambda$-integrable, while $L^\infty(A)$ denotes the space of $\lambda$-essentially bounded functions. For $a,b\in\RR$ and $p\in[1,\infty]$, we use the shorthand $L^p(a,b):=L^p((a,b))$, and analogously for $(a,b]$, $[a,b)$, and $[a,b]$. The usual $L^p$ norm is denoted by $\|\cdot\|_p$, with $\|\cdot\|_\infty$ denoting the essential supremum norm.

For $X \subseteq \mathbb \overline{\RR}$, we denote by $\Mon((0,1),X)$ the set of non-decreasing, left-continuous functions from $(0,1)$ to $X$. For a function $v:\RR \to \RR$, we denote by $v^*(y):= \sup_{x \in \RR} (xy - v(x))$ its convex conjugate.

\section{The $q$-Bass construction and its fixed-point formulation}
\label{sec:q-bass-construction}

This section collects the formal definitions and structural properties underlying the results stated in the introduction. We first recall the concept of convex order and its description through integrated quantile functions, then introduce $q$-Bass martingales and their variational interpretation, and finally present the associated fixed-point formulation and the standing assumptions used throughout the paper.

\subsection{Convex order and integrated quantiles}\label{sec:cvx-order}

We recall the notions used to formulate the existence results.

\begin{definition}[Convex order and irreducibility]
\label{def:cvx-order-irreducibility}
Let $\mu,\nu\in\Prob_1(\RR)$. We say that $\mu$ is dominated by $\nu$ in convex order, and write $\mu\preceq_c\nu$, if
\[
\int_\RR\varphi\,d\mu\leq\int_\RR\varphi\,d\nu
\]
for every convex function $\varphi:\RR\to\RR$ for which both integrals are well defined. For $\rho\in\Prob_1(\RR)$, define
\[
C_\rho(x):=\int_\RR(y-x)^+\,\rho(dy),\qquad x\in\RR.
\]
The pair $(\mu,\nu)$ is called irreducible if $\mu\preceq_c\nu$ and the set
\[
I_{\mu,\nu}:=\{x\in\RR:C_\mu(x)<C_\nu(x)\}
\]
is an interval satisfying $\mu(I_{\mu,\nu})=1$.
\end{definition}

The potential characterization of convex order states that $\mu\preceq_c\nu$ if and only if $\mu$ and $\nu$ have the same mean and $C_\mu\leq C_\nu$ on $\RR$; see \cite[Section~2.2]{BeJu16}.

\begin{definition}[Integrated quantile function]
\label{def:integ_quant}
For $\eta\in\Prob_1(\RR)$, its integrated quantile function is the map $U_\eta:[0,1]\to\RR$ defined by
\[
U_\eta(p):=\int_0^pQ_\eta(u)\,du.
\]
\end{definition}

We will repeatedly use the characterization proved in \cite[Proposition~1.8]{AcMa26_semidiscrete}. Namely, $U_\eta$ is convex, $U_\eta(0)=0$, and $U_\eta(1)=\mean(\eta)$. Moreover, for $\mu,\nu\in\Prob_1(\RR)$,
\[
\mu\preceq_c\nu \quad\Longleftrightarrow\quad U_\mu(1)=U_{\nu}(1)\ \text{ and }\ U_\mu(p)\geq U_{\nu}(p)\ \text{ for every }p\in(0,1),
\]
while $(\mu,\nu)$ is irreducible precisely when the last inequality is strict throughout $(0,1)$.

\begin{remark}[Irreducible components]
\label{rmk:irreducible-components}
When the pair $(\mu,\nu)$ is not irreducible, the open set $I_{\mu,\nu}$ decomposes into pairwise disjoint connected components. The canonical decomposition of \cite[Section~2.2]{BeJu16} associates with each of these intervals an irreducible pair of subprobability measures, while the remaining common part of $\mu$ and $\nu$ is transported by the identity. After normalization, these pairs are referred to as the \emph{irreducible components} of $(\mu,\nu)$.

The decomposition can often be visualized through the contact points of $U_\mu$ and $U_\nu$. In particular, when the curves meet at finitely many points and the inequality $U_\mu>U_\nu$ is strict between consecutive contact points, these points divide the total mass into blocks corresponding to the irreducible components. In full generality, this interpretation must be understood through the canonical decomposition above, as the contact set may have a more complicated structure.
For instance, in Figure~\ref{fig:integrated_quantile_2}, we take
\[
\mu=\operatorname{Unif}_{[-2,-1]\cup[1,2]},\qquad \nu=\operatorname{Unif}_{[-3,3]}.
\]
The integrated quantile functions meet at $p=1/2$ and satisfy $U_\mu>U_\nu$ on both $(0,1/2)$ and $(1/2,1)$. The corresponding irreducible intervals are $(-3,0)$ and $(0,3)$, and the associated normalized irreducible pairs are
\[
\bigl(\operatorname{Unif}_{[-2,-1]},\operatorname{Unif}_{[-3,0]}\bigr)\qquad\text{and}\qquad\bigl(\operatorname{Unif}_{[1,2]},\operatorname{Unif}_{[0,3]}\bigr).
\]
\end{remark}

\begin{figure}[H]
\centering
\includegraphics[width=0.5\textwidth]{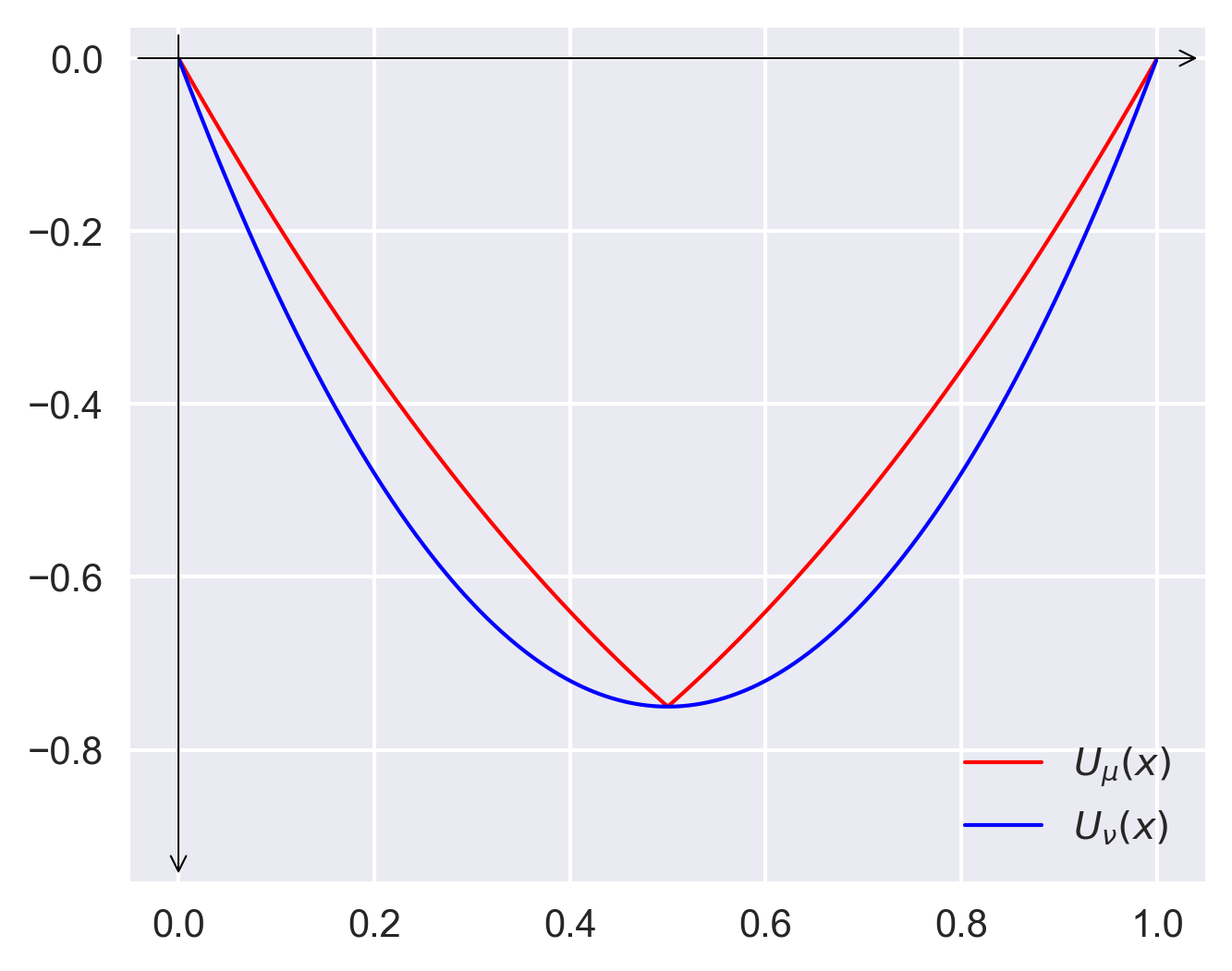}
\caption{Integrated quantile functions of $\mu=\operatorname{Unif}_{[-2,-1]\cup[1,2]}$ and $\nu=\operatorname{Unif}_{[-3,3]}$.
}
\label{fig:integrated_quantile_2}
\end{figure}

\subsection{The $q$-Bass martingale}

The notion of $q$-Bass martingale was introduced by Tschiderer in \cite{Ts24} through a formulation based on dual convex potentials. In one dimension, monotone transport maps admit an explicit representation in terms of cumulative distribution and quantile functions, which yields the equivalent definition given below. 

\begin{definition}[$q$-Bass martingale]
\label{def:q-Bass}
	Let $\mu, \nu \in \Prob_1(\RR)$, let $q \in \Prob(\RR)$ satisfy $q\ll \lambda$, and let
	\[
		(M_0,M_1)\sim\pi\in\Mart(\mu,\nu).
	\]
	We say that $\pi$ is a $q$-Bass martingale if there exists a measure $\alpha \in \Prob(\RR)$ such that
	\begin{equation}
	\label{eq:q-Bass-representation}
		M_0= (q \star T)(X_0), \qquad M_1=T(X_0+Q), \qquad \text{and} \qquad q \star T \text{ is increasing $\alpha$-a.s.},
	\end{equation}
	where $X_0 \sim \alpha$, $Q\sim q$, $X_0$ is independent of $Q$, and $T=Q_\nu \circ F_{\alpha \ast q}$.
\end{definition}

More specifically, any $q$-Bass martingale from $\mu$ to $\nu$ makes the following diagram commute.
\[
\begin{tikzcd}[row sep=0.7in, column sep = 1.4in]
  M_0 \sim \mu \arrow[r, "\text{\normalsize $q$-Bass martingale}"]  & M_1 \sim \nu  \\
  X_0 \arrow[u, "\text{\normalsize $q \star T$}"] \sim \alpha \arrow[r] & X_0+Q \sim \alpha \ast q  \arrow[u, "\text{\normalsize $T$}"]
\end{tikzcd}
\]

The measure $\alpha$ in Definition~\ref{def:q-Bass} will be referred to as either fixed-point distribution or Bass distribution. The monotonicity requirement in \eqref{eq:q-Bass-representation} corresponds to the strict convexity condition on the potential $q\star\hat v$ in \cite[Definition~1.4]{Ts24}, with $T=\nabla\hat v$ in the present notation. The map $q\star T$ is $\alpha$-a.e. well-defined under the assumptions of the definition by \cite[Proposition~1.3]{AcMa26_semidiscrete}; its monotonicity, however, genuinely depends on the choice of $\alpha$, as discussed in \cite[Remark~1.2]{AcMa26_semidiscrete}. Finally, the existence of such a martingale necessarily implies $\mu\preceq_c\nu$, since Strassen's theorem \cite{St65} characterizes the non-emptiness of $\Mart(\mu,\nu)$ by convex order.

\subsection{Variational interpretation}

The relevance of the above construction comes from its optimality in martingale transport. If $\mu,\nu,q\in\Prob_2(\RR)$, Tschiderer \cite{Ts24} showed that a $q$-Bass martingale from $\mu$ to $\nu$ is the unique optimizer of
\begin{equation}
	\label{def:WT^q_S}
	WT^q_\Scal(\mu,\nu)=\sup_{\pi\in\Mart(\mu,\nu)}\Scal(\pi), \qquad \Scal(\pi):=\int_\RR\mu(dx)\MCov(\pi_x, q),
\end{equation}
where, for any $\eta, \zeta \in \Prob(\RR)$, 
\[
\MCov(\eta, \zeta):=\sup_{p\in\Pi(\eta,\zeta)}\int_{\RR^2}xy\,p(dx,dy)
\]
denotes the maximal covariance between $\eta$ and $\zeta$.
Equivalently, up to terms depending only on the marginals and on $q$, it minimizes the average squared Wasserstein distance of the transition kernels from the reference measure:
\begin{equation}
	\label{def:WT^q_I}
    WT^q_\Ical(\mu,\nu)=\inf_{\pi\in\Mart(\mu,\nu)}\Ical(\pi), \qquad \Ical(\pi):=\int_\RR\mu(dx)\,\Wcal_2^2(\pi_x,q),
\end{equation}
where $\Wcal_2$ denotes the $2$-Wasserstein distance. Under the same assumptions, \eqref{def:WT^q_S} admits the dual representation
\begin{equation}
\label{eq:dual-problem}
	D^q(\mu,\nu)=\inf_{\substack{\psi\in L^1(\nu),\\ \psi\text{ convex}}}\left(\int_\RR\psi(x)\,\nu(dx)-\int_\RR(\psi^*\star q)^*(x)\,\mu(dx)\right).
\end{equation}

\subsection{The fixed-point equation}
In dimension one, the condition $(q\star T)_\#\alpha=\mu$ arising from Definition~\ref{def:q-Bass} can be expressed through equations for the distribution function or the quantile function of $\alpha$. The formulation in terms of cumulative distribution functions was introduced by Conze and Henry-Labordère \cite{CoHe21} for computational purposes in the calibration of the Bass Local Volatility model, while a quantile-based formulation was developed in \cite{AcMaPa23}. Motivated by the former approach, for a general absolutely continuous reference measure $q$, we introduce the following operator.

\begin{definition}[Fixed-point operator]
	Let $\mu,\nu\in\Prob_1(\RR)$ and $q\in\Prob(\RR)$ satisfy $q\ll\lambda$. We define $\Acal_q:\CDF\to\CDF$ by
	\begin{equation}
		\label{def:fixed_point_operator}
		\Acal_qF:=F_\mu\circ\bigl(q\star(Q_\nu\circ(q\ast F))\bigr).
	\end{equation}
\end{definition}

We shall repeatedly use the fact that $\Acal_q$ is shift-invariant and monotone: for every $F,G\in\CDF$ and $c,x\in\RR$,
\[
(\Acal_q(F(\cdot+c)))(x)=(\Acal_qF)(x+c); \qquad F\leq G \ \Longrightarrow\ \Acal_qF\leq\Acal_qG.
\]
These elementary properties are proved in the Gaussian case in \cite[Lemma~3.2]{AcMaPa23}, and the same arguments apply to general $q$.

The quantile formulation was extended to arbitrary absolutely continuous reference measures in \cite[Theorem~1.4]{AcMa26_semidiscrete}. More precisely, a $q$-Bass martingale from $\mu$ to $\nu$ exists if and only if there is a distribution $\alpha\in\Prob(\RR)$ such that
\begin{equation}
\label{eq:fixed-point_equiv}
\int_\RR Q_\nu\left(\int_0^1F_q\bigl(Q_\alpha(u)-Q_\alpha(v)+z\bigr)\,dv\right)\rho_q(z)\,dz=Q_\mu(u), \qquad \text{$\lambda$-a.e. }u\in(0,1),
\end{equation}
and $q\star T$ is increasing $\alpha$-a.s., where $T=Q_\nu\circ F_{\alpha\ast q}$.
Under suitable assumptions, such as conditions~(i) and~(ii) of Theorem~\ref{thm:convergence}, a direct adaptation of \cite[Theorem~2.1]{CoHe21} yields the following equivalent formulation:
\begin{equation}
\label{eq:fixed_point_eq}
F_\alpha=\Acal_qF_\alpha.
\end{equation}
The latter formulation gives rise to the fixed-point iteration studied in Section~\ref{sec:convergence}.

\subsection{Standing assumptions}

To prove uniqueness and stability, we work under the following regularity assumptions, already used in \cite{AcMa26_semidiscrete}. Assumption~\ref{ass:A3} is equivalent to the integrability condition appearing in \cite{HaJoLoObPa26} for the existence of a (Gaussian) Bass martingale attaining the variational value in \eqref{def:WT^q_S}.

\begin{assumption}[Regularity of the distributions]
\label{ass:technical-assumptions}
Let $\nu\in\Prob_1(\RR)$ satisfy $\mean(\nu)=0$, and let $q\in\Prob(\RR)$. We assume that:
\begin{enumerate}[label=(A\arabic*)]
	\item \label{ass:A1} the probability measures $\nu$ and $q$ are absolutely continuous with respect to $\lambda$;
	\item \label{ass:A2} the supports of $\nu$ and $q$ are intervals, and their densities are strictly positive $\lambda$-a.e. on the interior of their respective support;
	\item \label{ass:A3} there exist $a>1$ and $b>\frac{a}{a-1}$ such that $\nu\in\Prob_a(\RR)$ and $q\in\Prob_b(\RR)$;
	\item \label{ass:A4} there exists $\varsigma>1$ such that $Q_\nu'\in L^\varsigma(0,1)$ and $\rho_q\in L^{\max\left(\frac{2\varsigma-1}{\varsigma-1},\frac{a}{a-1}\right)}(\RR)$.
\end{enumerate}
\end{assumption}

Since $\rho_q\in L^1(\RR)$, interpolation shows that the exponent in Assumption~\ref{ass:A4} simultaneously yields the two integrability requirements $\rho_q\in L^{(2\varsigma-1)/(\varsigma-1)}(\RR)$ and $\rho_q\in L^{a/(a-1)}(\RR)$.

The following endpoint conditions ensure that every fixed-point distribution is compactly supported and are used in the uniqueness, stability, and convergence analysis.

\begin{assumption}[Conditions on the quantile endpoints]
\label{ass:technical-assumptions-2}
Let $\mu,\nu\in\Prob(\RR)$. We assume that
\begin{enumerate}[label=(A\arabic*)]
  \setcounter{enumi}{4}
	\item \label{ass:A5} $\lim_{u\to0^+}Q_\mu(u)\neq\lim_{u\to0^+}Q_\nu(u)$;
	\item \label{ass:A6} $\lim_{u\to1^-}Q_\mu(u)\neq\lim_{u\to1^-}Q_\nu(u)$.
\end{enumerate}
\end{assumption}

\section{Existence and Uniqueness of Fixed-Point Solutions}
\label{sec:existence-uniqueness}

\subsection{The $q$-Bass operator}

The aim of this subsection is to extend the geometric parametrization developed in \cite[Sections~2 and~3]{AcMa26_semidiscrete} beyond the semidiscrete setting. Suppose that $\mu$ is $n$-atomic, that is $
\mu=\sum_{i=1}^n p_i\delta_{x_i}$,
where $p_i>0$ and $x_1<\cdots<x_n$, and set $p_i^*:=\sum_{j=1}^i p_j$.
Recall that a convex polygonal chain on $[0,1]$ is the graph of a continuous convex piecewise affine function, obtained by joining each pair of consecutive vertices by a line segment. The graph of $U_\mu$ is such a chain, with intermediate vertices having abscissas $p_1^*,\dots,p_{n-1}^*$. Starting from the fixed-point equation \eqref{eq:fixed-point_equiv}, the problem can be reduced to a finite-dimensional system in the gaps between the atoms of the fixed-point distribution. The resulting map, called the $n$-atomic $q$-Bass map, parametrizes exactly the convex polygonal chains arising as integrated quantile functions of measures $\mu$ with the prescribed weights for which $(\mu,\nu)$ is irreducible; see \cite[Theorem~3.5]{AcMa26_semidiscrete}.

For a general initial marginal $\mu$, the graph of $U_\mu$ need no longer be a polygonal chain. Nevertheless, the same geometric description remains valid: by \cite[Proposition~1.8]{AcMa26_semidiscrete}, the integrated quantile functions of measures $\mu$ such that $(\mu,\nu)$ is irreducible are precisely the convex curves having the same endpoints as $U_\nu$ and lying strictly above it on $(0,1)$. This suggests replacing the finite-dimensional vector of the gaps between the atoms of the fixed-point distribution by its quantile function $Q$. The operator introduced below is obtained directly from \eqref{eq:fixed-point_equiv} and plays the analogous role of the $n$-atomic $q$-Bass map in the current infinite-dimensional setting. We will show that every such integrated quantile function $U_\mu$ can be represented as  $\Mcal(Q)$ for some quantile function $Q$, where $\Mcal$ is the operator defined below, while the semidiscrete parametrization is recovered when $Q$ is piecewise constant.

\begin{definition}[$q$-Bass operator with respect to $\nu$]
\label{def:q-Bass-operator}
Let $\nu \in \Prob_1(\RR)$ and $q \in \Prob(\RR)$ be such that $q \ll \lambda$ and $\nu$ is not a Dirac measure. The operator $\Mcal \colon \Mon((0,1), \RR) \to C([0,1])$ defined by
\begin{equation}
\label{def:q-Bass-operator}
\Mcal(Q)(p) = \int_\RR Q_\nu\left( \int_0^1 F_q(z - Q(v))\,dv \right)\int_0^p \rho_q(z - Q(u))\,du\, dz,
\end{equation}
for every $p \in [0,1]$, is called the \emph{$q$-Bass operator} with respect to $\nu$.
\end{definition}

When the terminal distribution $\nu$ is clear from the context, we simply refer to $\Mcal$ as the $q$-Bass operator.
If $Q$ is a quantile function, the fixed-point equation \eqref{eq:fixed-point_equiv} is equivalent to
\begin{equation}
\label{eq:fixed_point_new_eq}
    \Mcal(Q)=U_\mu.
\end{equation}
This formulation recasts the existence problem for $q$-Bass martingales in terms of integrated quantile functions, which provide a natural geometric interpretation of convex order and irreducibility; see Section \ref{sec:cvx-order}.
The next proposition describes the main geometric properties of $\Mcal$, showing in particular that its image consists of convex curves lying above $U_\nu$.
These properties allow us to study the existence problem geometrically, by checking whether the curve $U_\mu$ can be realized as the image under $\Mcal$ of a quantile function.

\begin{proposition}
\label{prop:Bass_operator_properties}
Let $\nu \in \Prob_1(\RR)$ and $q \in \Prob(\RR)$ be such that $q \ll \lambda$ and $\nu$ is not a Dirac measure. Let $\Mcal$ be the $q$-Bass operator with respect to $\nu$. Then, for every $Q \in \Mon((0,1), \RR)$, the function $\Mcal(Q)$ is well defined, convex, and dominates $U_\nu$. In particular, $\Mcal(Q)(0)=0$ and $\Mcal(Q)(1)=\mean(\nu)$. Moreover, the following properties hold:
\begin{enumerate}[label=(\roman*)]
\item If $\Delta Q(p):=Q(p^+)-Q(p)\geq\diam(\supp(q))$, then $\Mcal(Q)(p)=U_\nu(p)$. If $\nu$ is atomless, the converse implication also holds.
\item If $Q$ is constant on a subinterval of $[0,1]$, then $\Mcal(Q)$ is affine on that interval.
    \item $\Mcal(Q) = \Mcal(Q+c)$, for any $c \in \RR$.
\end{enumerate}
\end{proposition}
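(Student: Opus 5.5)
The plan is to read $\Mcal(Q)$ probabilistically. Let $V\sim\mathrm{Unif}(0,1)$ and $Z\sim q$ be independent, and set $Y:=Q(V)+Z$. Then $\int_0^1F_q(z-Q(v))\,dv=F_Y(z)$, and $Y$ has density $\rho_Y(z)=\int_0^1\rho_q(z-Q(u))\,du$, since $q\ll\lambda$. Put $T:=Q_\nu\circ F_Y$; because $Y$ is atomless, $T(Y)\sim\nu$. The definition can then be rewritten as
\[
\Mcal(Q)(p)=\int_\RR T(z)\int_0^p\rho_q(z-Q(u))\,du\,dz=\EE\bigl[T(Y)\,1_{\{V\le p\}}\bigr].
\]
Well-definedness follows because $|T(Y)|$ is integrable, as $T(Y)\sim\nu\in\Prob_1$. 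The function $p\mapsto\Mcal(Q)(p)$ is then an integral in $p$ of $g(u):=\EE[T(Y)\mid V=u]=\int T(Q(u)+z)\,q(dz)=(q\star T)(Q(u))$. The map $T$ is non-decreasing and $Q$ is non-decreasing, so $g$ is non-decreasing. Hence $\Mcal(Q)(p)=\int_0^pg(u)\,du$ is convex, with $\Mcal(Q)(0)=0$ and $\Mcal(Q)(1)=\EE[T(Y)]=\mean(\nu)$.

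For domination, write $W:=T(Y)\sim\nu$. Then $\Mcal(Q)(p)=\EE[W1_{A}]$ with $\PP(A)=p$, where $A=\{V\le p\}$. The Hardy–Littlewood (bathtub) inequality gives $\EE[W1_A]\ge\int_0^pQ_\nu(u)\,du=U_\nu(p)$, because among events of probability $p$ the smallest value of $\EE[W1_A]$ is attained on the lower $p$-quantile set of $W$.

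For (i), equality in the bathtub inequality holds iff $W\le Q_\nu(p^+)$ a.s. on $A$ and $W\ge Q_\nu(p)$ a.s. on $A^c$. If $\Delta Q(p)\ge\diam(\supp q)$, then a.s. $Q(V)+Z\le Q(p)+\sup\supp q\le Q(p^+)+\inf\supp q\le Q(V')+Z'$ for points with $V\le p<V'$. So $Y$ on $A$ lies below $Y$ on $A^c$. Monotonicity of $T$ then separates $W$ accordingly, and since $\PP(A)=p$ this gives equality. For the converse with $\nu$ atomless, $Q_\nu$ is strictly increasing, hence $T$ is strictly increasing on the range of $F_Y$. Equality then forces $Y|_A\le c\le Y|_{A^c}$ a.s. for some $c$. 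The supports of $Q(u)+Z$ for $u\le p$ contain $Q(p)+\supp q$, and similarly $Q(p^+)+\supp q$ for $u>p$; this uses that $Q$ is left-continuous and the essential ranges. These must then be separated by $c$, which yields $Q(p)+\sup\supp q\le Q(p^+)+\inf\supp q$. This converse is the step requiring the most care: I would handle the limit $u\uparrow p$ via left-continuity and $u\downarrow p$ via the right limit, and treat unbounded support, where the inequality cannot hold and equality must fail, as a separate case.

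For (ii), if $Q$ is constant on an interval then $g$ is constant there, so $\Mcal(Q)$ is affine. For (iii), replacing $Q$ by $Q+c$ shifts $Y$ by $c$ and $F_Y$ by the same shift. Then $T$ becomes $T(\cdot-c)$ and $T(Y)$ is unchanged, so $\Mcal(Q+c)=\Mcal(Q)$.
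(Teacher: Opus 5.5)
Your proposal is correct. For well-definedness, convexity, (ii) and (iii) it is the paper's argument in probabilistic language. The paper also shows that the integrand $u\mapsto\int_\RR Q_\nu\bigl(\int_0^1F_q(z+Q(u)-Q(v))\,dv\bigr)\rho_q(z)\,dz$, which is your $g(u)=(q\star T)(Q(u))$, is non-decreasing. It gets (iii) from the change of variables $z\mapsto z+Q(u)$, which is your observation that $T(Y)$ is unchanged when $Y$ is shifted.

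The genuine difference is in the domination and in (i). The paper writes
\[
\Mcal(Q)(p)-U_\nu(p)=\int_0^p\int_\RR\bigl[Q_\nu(A_p+R_p)-Q_\nu(A_p)\bigr]\,q(dz)\,du,
\]
splitting the inner distribution function into the contributions $A_p$ and $R_p$ of $v\in(0,p)$ and $v\in(p,1)$. The subtracted term integrates to $U_\nu(p)$ because $\int_0^pF_q(\cdot-Q(v))\,dv$ is $p$ times the distribution function of the mixture $Q(U_p)+Z$, with $U_p\sim\mathrm{Unif}_{[0,p]}$. Nonnegativity is then pointwise from the monotonicity of $Q_\nu$. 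Equality is reduced to $R_p=0$ a.e., i.e.\ $F_q(z+Q(u)-Q(v))=0$ for a.e.\ $(z,u,v)\in\RR\times(0,p)\times(p,1)$, and strict monotonicity of $Q_\nu$ supplies the converse.

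You instead apply the bathtub (Hardy--Littlewood) inequality to $W=T(Y)\sim\nu$ and the event $\{V\le p\}$. You then read the equality case as a separation of $Y$ on $A$ and on $A^c$, identifying the supports of the two conditional laws through the left-continuity and the right limit of $Q$ at $p$. Your route shows the domination is a general rearrangement fact that does not use the structure of $q$, and turns (i) into a transparent separation statement. The unbounded-support case you planned to treat separately is in fact automatic, since $Q(p)+\sup\supp q\le c$ is impossible when $\sup\supp q=+\infty$. The paper's route is more computational, and its splitting reappears (as $S_{k,p}$ and $T_{k,p}-S_{k,p}$) in the proof of Proposition~\ref{prop:q-Bass-operator-stability}(ii). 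Your representation also reappears there, as \eqref{eq:Mcal-prob-repr-stab}.

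One small correction: the equality criterion you state for the bathtub inequality is not an ``iff''. Suppose $\nu$ has atoms at both $Q_\nu(p)<Q_\nu(p^+)$, e.g.\ $\nu=\tfrac12(\delta_0+\delta_1)$ and $p=\tfrac12$, and $A$ takes half the mass of each atom. Then $W\le Q_\nu(p^+)$ on $A$ and $W\ge Q_\nu(p)$ on $A^c$, yet $\EE[W\mathbf 1_A]=\tfrac14>0=U_\nu(\tfrac12)$. The correct condition is separation by a single constant: $W\le c$ a.s.\ on $A$ and $W\ge c$ a.s.\ on $A^c$. Nothing breaks. In the forward direction you actually obtain this stronger separation, by pushing the separation of $Y$ through the monotone map $T$. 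In the converse $\nu$ is atomless, and there the two conditions coincide.
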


\begin{proof}
Let $Q \in \Mon((0,1), \RR)$. We first show that $\Mcal(Q)$ is well defined. For every $p \in [0,1]$,
\begin{align*}
|\Mcal(Q)(p)|
&\leq
\int_\RR
\left|
Q_\nu\left( \int_0^1 F_q(z - Q(v))\,dv \right)
\right|
\int_0^p \rho_q(z - Q(u))\,du \, dz \\
&\leq
\int_\RR
\left|
Q_\nu\left( \int_0^1 F_q(z - Q(v))\,dv \right)
\right|
\int_0^1 \rho_q(z - Q(u))\,du \, dz =
\int_0^1 |Q_\nu(u)|\,du
< \infty.
\end{align*}
By Fubini's theorem and a change of variable in $z$, we also obtain
\begin{align*} 
		\Mcal(Q)(p) &= \int_\RR \int_0^p Q_\nu\left( \int_0^1 F_q(z - Q(v))\,dv \right) \rho_q(z - Q(u))\,du \, dz \\
		&= \int_\RR \int_0^1 Q_\nu\left( \int_0^1 F_q(z + Q(u) - Q(v))\,dv \right) \rho_q(z)\mathbf{1}_{[0,p]}(u)\,du \, dz,
\end{align*}
which yields the shift-invariance property $(iii)$. Additionally, by the dominated convergence theorem, $[0,1] \ni p \mapsto \Mcal(Q)(p)$ is a continuous function.

We next prove that $\Mcal(Q)$ is convex. Let $0 \leq p_1 < p_2 < p_3 \leq 1$ be such that
\[
p_3-p_2 = p_2-p_1 = \delta.
\]
We need to show that $ \Mcal(Q)(p_2)-\Mcal(Q)(p_1) \leq \Mcal(Q)(p_3)-\Mcal(Q)(p_2)$. Now,
\begin{align*}
\Mcal(Q)(p_2)-\Mcal(Q)(p_1)
=
\int_\RR \int_{p_1}^{p_2}
Q_\nu\left( \int_0^1 F_q(z + Q(u) - Q(v))\,dv \right)
\rho_q(z)\,du \, dz.
\end{align*}
Since $Q$, $F_q$, and $Q_\nu$ are non-decreasing, it follows that
\begin{align*}
&\int_\RR \int_{p_1}^{p_2}
Q_\nu\left( \int_0^1 F_q(z + Q(u) - Q(v))\,dv \right)
\rho_q(z)\,du \, dz \\
&\qquad \leq
\int_\RR \int_{p_2}^{p_3}
Q_\nu\left( \int_0^1 F_q(z + Q(u) - Q(v))\,dv \right)
\rho_q(z)\,du \, dz  =
\Mcal(Q)(p_3)-\Mcal(Q)(p_2).
\end{align*}
Therefore $\Mcal(Q)$ is convex. 
Moreover, if $Q$ is constant on $[p_1,p_3]$, then equality holds. Consequently, $\Mcal(Q)$ is affine on $[p_1,p_3]$.

Finally, let $p\in(0,1)$. Setting
\[
A_p(z,u):=\int_0^p F_q\bigl(z+Q(u)-Q(v)\bigr)\,dv,\qquad R_p(z,u):=\int_p^1 F_q\bigl(z+Q(u)-Q(v)\bigr)\,dv,
\]
we have
\[
\Mcal(Q)(p)-U_\nu(p)=\int_0^p\int_\RR\left[Q_\nu\bigl(A_p(z,u)+R_p(z,u)\bigr)-Q_\nu\bigl(A_p(z,u)\bigr)\right]q(dz)\,du.
\]
Hence $\Mcal(Q)(p)=U_\nu(p)$ if and only if the integrand vanishes for $q(dz)\,du$-a.e. $(z,u)\in\RR\times(0,p)$. In particular, equality holds whenever $R_p(z,u)=0$ for $q(dz)\,du$-a.e. $(z,u)$, which, by Tonelli's theorem, is equivalent to
\[
F_q\bigl(z+Q(u)-Q(v)\bigr)=0
\]
for $q(dz)\,du\,dv$-a.e. $(z,u,v)\in\RR\times(0,p)\times(p,1)$. Since $q\ll\lambda$ and $Q$ is non-decreasing and left-continuous, this is equivalent to
\[
\sup\supp(q)+Q(p)-Q(p^+)\leq\inf\supp(q),
\]
or, equivalently,
\[
\Delta Q(p)\geq\diam\bigl(\supp(q)\bigr).
\]
Thus $\Delta Q(p)\geq\diam(\supp(q))$ implies $\Mcal(Q)(p)=U_\nu(p)$. If, in addition, $\nu$ is atomless, then $Q_\nu$ is increasing and the converse also holds.
\end{proof}

We conclude this section by showing that $\Mcal$ reduces to the $n$-atomic $q$-Bass map when its argument is piecewise constant.

\begin{remark}[Connection with the $n$-atomic $q$-Bass map]
\label{rmk:q-bass-operator-gen}
Let $p_1,\dots,p_n>0$ satisfy $\sum_{i=1}^n p_i=1$, and set $p_i^*:=\sum_{j=1}^i p_j$. Suppose that $Q\in\Mon((0,1),\RR)$ is constant on each interval of the partition
\[
(0,p_1^*],\qquad (p_i^*,p_{i+1}^*],\quad i=1,\dots,n-2,\qquad (p_{n-1}^*,1).
\]
Then $Q$ is determined up to an additive constant by the gap vector $h\in\RR_{\geq0}^{n-1}$ defined by
\[
h_i:=\Delta Q(p_i^*),\qquad i=1,\dots,n-1.
\]
Let $f$ be the $n$-atomic $q$-Bass map associated with the weights $p_1,\dots,p_n$ and the terminal distribution $\nu$, as introduced in \cite[Definition~2.3]{AcMa26_semidiscrete}. Then
\[
\Mcal(Q)(p_i^*)=f_i(h),\qquad i=1,\dots,n-1,
\]
and $\Mcal(Q)$ is affine between consecutive points $p_i^*$. Consequently, $\Mcal(Q)$ coincides with the polygonal chain generated by $f(h)$ in the sense of \cite[Definition~3.4]{AcMa26_semidiscrete}. By the translation invariance property of $\Mcal$, the same conclusion holds for $Q+c$, for every $c\in\RR$.
\end{remark}

\subsection{Existence of Fixed-Point Solutions}
The $q$-Bass operator introduced in Definition~\ref{def:q-Bass-operator} provides a convenient formulation of the existence problem. When $(\mu, \nu)$ is irreducible, we approximate $\mu$ by measures $(\mu_n)_{n \in \NN}$ supported on an increasing number of atoms and such that the pairs $(\mu_n ,\nu)$ are still irreducible. We then solve the corresponding semidiscrete problems, and extract a limit thanks to a generalization of Helly's selection theorem. On the other hand, when $(\mu, \nu)$ is not irreducible, we solve the problem on the irreducible components (see Remark~\ref{rmk:irreducible-components}) and suitably combine the resulting solutions.

\begin{theorem}
\label{thm:general-existence-convex-param} 
Under the assumptions of Theorem \ref{thm:q-bass-existence}, suppose in addition that $\nu$ is not a Dirac measure.
Let $\Mcal$ be the $q$-Bass operator with respect to $\nu$.
Then \eqref{eq:fixed_point_new_eq} admits a solution $Q \in \Mon((0,1), \RR)$.
Moreover, $Q$ may be chosen so that it is $\lambda$-a.e.\ constant on every level set of $Q_\mu$.
\end{theorem}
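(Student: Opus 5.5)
In case~(i), I approximate $\mu$ by finitely supported measures, solve each semidiscrete problem, and pass to the limit with a Helly-type compactness argument. Case~(ii) is then assembled from its irreducible components.

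\textbf{Approximation.} Let $(\mu,\nu)$ be irreducible. For each $n$, I take a partition $0=p_0^n<p_1^n<\dots<p_{m_n}^n=1$ of mesh tending to $0$, refining the previous one. I let $\mu_n$ be the measure whose integrated quantile function $U_{\mu_n}$ is the polygonal chain interpolating $U_\mu$ at the points $p_i^n$. Equivalently, $\mu_n$ places mass $p_i^n-p_{i-1}^n$ at the $\mu$-conditional mean of the $i$-th quantile block.

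Then $U_{\mu_n}\ge U_\mu>U_\nu$ on $(0,1)$, with the same endpoints, so $(\mu_n,\nu)$ is irreducible. Moreover $U_{\mu_n}\to U_\mu$ uniformly. When $Q_\mu$ is constant on an interval, I include its endpoints in the partitions, so that $\mu_n$ inherits the level-set structure of $Q_\mu$.

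By the semidiscrete existence result \cite[Theorem~1.9]{AcMa26_semidiscrete}, together with Remark~\ref{rmk:q-bass-operator-gen}, there is a piecewise constant $Q_n\in\Mon((0,1),\RR)$ with $\Mcal(Q_n)=U_{\mu_n}$. I normalize it by $Q_n(1/2)=0$, which is allowed by Proposition~\ref{prop:Bass_operator_properties}(iii). By construction $Q_n$ is constant on every block where $U_{\mu_n}$ is affine.

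\textbf{Compactness and passage to the limit.} This is the main obstacle. Helly's theorem gives a subsequence $Q_n\to Q$ pointwise, but the limit may take values in $\overline\RR$. I must rule out jumps to $\pm\infty$, that is, a gap that becomes unboundedly large, and more generally any gap of size at least $\diam(\supp q)$ inside $(0,1)$.

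The idea is to show that such a degeneration forces contact with $U_\nu$. Suppose $Q_n(p)\to Q(p)$ with $Q(p^+)-Q(p)\ge\diam(\supp q)$, or that $Q_n$ diverges on $(p,1)$ while staying bounded on $(0,p]$. Then the mass of $F_q(z+Q_n(u)-Q_n(v))$ for $u<p<v$ tends to $0$. The computation in the proof of Proposition~\ref{prop:Bass_operator_properties}, via the $A_p$/$R_p$ decomposition and dominated convergence, then gives $\Mcal(Q_n)(p)-U_\nu(p)\to0$. Domination comes from $|Q_\nu|\in L^1$ after the rearrangement used to show well-definedness.

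Since $\Mcal(Q_n)(p)=U_{\mu_n}(p)\to U_\mu(p)>U_\nu(p)$, this is a contradiction. Hence $Q$ is real-valued on $(0,1)$ (I would apply the generalized Helly theorem to the extended-valued functions and then exclude infinite values as above). I then replace $Q$ by its left-continuous version.

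Finally, $\Mcal(Q_n)\to\Mcal(Q)$ pointwise. The inner CDF arguments converge by dominated convergence, using that $F_q$ is continuous because $q\ll\lambda$. $Q_\nu$ is continuous except at countably many points, and I handle those exceptional points by monotonicity together with the $L^1$ bound. Thus $\Mcal(Q)=U_\mu$, and $Q$ is constant on the level sets of $Q_\mu$ because each $Q_n$ was.

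\textbf{Case (ii).} $K=\{k_1<\dots<k_m\}$ is finite and splits $(0,1)$ into intervals on which $U_\mu>U_\nu$. On each interval I apply the irreducible case to the normalized component pair, with $\nu$ restricted to the corresponding quantile block; this block is not a Dirac measure by \eqref{eq:compact-support-components}. This produces local solutions $Q^{(j)}$.

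I concatenate them, adding shifts chosen so that each jump at $k_j$ is at least $\diam(\supp q)$, which is finite because $q$ is compactly supported. Then the contributions from different blocks decouple, since $F_q\in\{0,1\}$ across blocks. The inner integral $\int_0^1F_q\,dv$ becomes a block offset plus the rescaled local inner integral, so $\Mcal(Q)$ restricted to each block reduces to the component operator. Proposition~\ref{prop:Bass_operator_properties}(i) also gives $\Mcal(Q)(k_j)=U_\nu(k_j)=U_\mu(k_j)$.

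The conditions in \eqref{eq:compact-support-components} ensure that the component marginals have the non-degeneracy needed for the irreducible case.
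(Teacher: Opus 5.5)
Your strategy is the paper's: irreducible atomic approximations, the semidiscrete existence theorem, generalized Helly with $Q_n(1/2)=0$, exclusion of infinite limits via forced contact with $U_\nu$, and gluing of component solutions with gaps of size at least $\diam(\supp(q))$. There is, however, a genuine gap in case~(ii), and the two limit passages need a different argument from the one you give.

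\textbf{Boundedness at the junctions in case~(ii).} Gluing requires each component solution to be bounded at the junctions, i.e.\ $Q^{(j)}(1^-)<\infty$ and $Q^{(j+1)}(0^+)>-\infty$. Otherwise no finite shift makes the concatenation real-valued and non-decreasing, let alone with a jump of size $\diam(\supp(q))$ at $k_j$. The irreducible case gives no such bound: fixed-point quantiles can diverge at an endpoint even for compactly supported marginals (Proposition~\ref{prop:compact-support-fixed-point}, Figure~\ref{fig:non-compact-support-example}).

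This is exactly what \eqref{eq:compact-support-components} is for. Write $(\mu_j,\nu_j)$ for the normalized component on $(k_{j-1},k_j)$. Then $Q_\mu(k_j)\neq Q_\nu(k_j)$ says $\lim_{s\to1^-}Q_{\mu_j}(s)\neq\lim_{s\to1^-}Q_{\nu_j}(s)$, and $Q_\mu(k_j^+)\neq Q_\nu(k_j^+)$ says the analogous inequality at $s\to0^+$ for $(\mu_{j+1},\nu_{j+1})$. Suppose $Q^{(j)}(u)\to+\infty$ as $u\to1^-$. Then the inner CDF $\int_0^1F_q(z+Q^{(j)}(u)-Q^{(j)}(v))\,dv$ tends to $1$ for every $z$. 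Monotone or dominated convergence in the quantile fixed-point equation then forces $Q_{\mu_j}(u)\to\lim_{s\to1^-}Q_{\nu_j}(s)$, a contradiction. The left end of the next component is handled symmetrically. Your reading, that the condition supplies a non-degeneracy needed for the irreducible case, is not right. The irreducible case needs none, and $\nu_j$ is non-Dirac simply because $(\mu_j,\nu_j)$ is irreducible.

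\textbf{The limit passages.} Dominated convergence does not apply, for three reasons. The argument of $Q_\nu$ moves with $n$ and need not converge pointwise: for $u,v<u_1^*$ the difference $Q_n(u)-Q_n(v)$ is of the form $-\infty+\infty$. $Q_\nu$ may be discontinuous, and unbounded near $0$ and $1$. And the rearrangement only gives the uniform bound $\|Q_\nu\|_{L^1(0,1)}$, not an $n$-independent majorant.

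What works (Proposition~\ref{prop:q-Bass-operator-stability}) is the following. Take $U_p\sim\mathrm{Unif}_{[0,p]}$ and $Z\sim q$ independent, set $X_{n,p}:=Q_n(U_p)+Z$, and write $\Mcal(Q_n)(p)=p\,\EE[Q_\nu(T_{n,p})]$ with $T_{n,p}:=\int_0^1F_q(X_{n,p}-Q_n(v))\,dv$. The law of $T_{n,p}$ has density at most $1/p$. In the degenerate case, $S_{n,p}:=\int_0^pF_q(X_{n,p}-Q_n(v))\,dv$ is exactly $\mathrm{Unif}_{[0,p]}$ for every $n$, and $T_{n,p}-S_{n,p}\to0$ a.s. Approximating $Q_\nu$ in $L^1(0,1)$ by bounded continuous functions then gives both limits.

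Relatedly, you should not assume boundedness on $(0,p]$ in the degenerate configuration, since $Q$ may diverge at both ends or blow up as $u\uparrow u_2^*$. At $p=u_2^*$ you only need $\lim_nQ_n(u)<+\infty$ for $u<p$ and $Q_n(v)\to+\infty$ for $v>p$.

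Finally, with countably many atoms you cannot put all level-set endpoints into one finite partition. Add them progressively, or argue as the paper does with compact subintervals of each level set.
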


\begin{proof}
We split the proof in two steps, where we prove the statement under assumptions \ref{it:existence-1} and \ref{it:existence-2}, respectively.

\noindent\emph{Step 1: Let \ref{it:existence-1} hold, i.e. $(\mu, \nu)$ be irreducible.} If $\mu$ is $n$-atomic, the claims follow immediately from \cite[Theorem 1.9]{AcMa26_semidiscrete}, \cite[Theorem 1.4]{AcMa26_semidiscrete}, and Remark~\ref{rmk:q-bass-operator-gen}. If $\mu$ is not $n$-atomic, we can invoke Proposition~\ref{prop:approx_mu} to ensure the existence of a sequence of measures $(\mu_k)_{k \in \NN}$ such that $\mu_k$ is $k$-atomic with equal weights, $U_{\mu_k} \rightarrow U_{\mu}$ pointwise on $[0,1]$,  and $(\mu_k, \nu)$ is irreducible.
For every $k\in\NN$, let $\alpha_k$ be a Bass distribution associated with a $q$-Bass martingale from $\mu_k$ to $\nu$, whose existence follows from \cite[Theorem~1.9]{AcMa26_semidiscrete}, and let $Q_k:=Q_{\alpha_k}$. By translation invariance, we may assume that $Q_k(1/2)=0$. Then 
\begin{equation} \label{eq:approx-condition}
    \Mcal(Q_k)=U_{\mu_k}. 
\end{equation}
Moreover, $Q_k$ is $\lambda$-a.e.\ constant on every level set of $Q_{\mu_k}$. Indeed, setting $S_k:=q\star T_k$, where $T_k:=Q_\nu\circ F_{\alpha_k\ast q}$, we have 
\[ 
S_k(Q_k(u))=Q_{\mu_k}(u) 
\] 
for $\lambda$-a.e.\ $u\in(0,1)$, while $S_k$ is increasing $\alpha_k$-a.s. Hence two distinct values of $Q_k$ cannot be mapped by $S_k$ to the same value of $Q_{\mu_k}$.

Now, we invoke Proposition~\ref{prop:helly-selection-thm} to conclude the existence of $u_1^*, u_2^* \in [0,1]$ such that $u_1^* \leq u_2^*$ and of a converging subsequence $(Q_{k_n})_{n \in \NN} \subseteq \Mon((0,1), \RR)$ with limit $Q^* \in \Mon((0,1), \overline \RR)$ such that
\[
Q^*(u)= -\infty \quad \text{if } u<u_1^*,
\qquad
Q^*(u)\in \RR \quad \text{if } u_1^*<u<u_2^*,
\qquad
Q^*(u)= +\infty \quad \text{if } u>u_2^*.
\]
In particular, since $Q_k(1/2)=0$ by construction, we have $u_1^* \leq 1/2 \leq u_2^*$.
By Proposition~\ref{prop:q-Bass-operator-stability}, if $u^*_1 > 0$ then 
\[
 \lim_{n \rightarrow \infty} \Mcal(Q_{k_n})(u_1^*) = U_\nu(u_1^*) < U_\mu(u_1^*),
\]
which contradicts \eqref{eq:approx-condition}. Similarly, if $u^*_2 < 1$, then $\lim_{n \rightarrow \infty} \Mcal(Q_{k_n})(u_2^*) < U_\mu(u_2^*)$, which again contradicts \eqref{eq:approx-condition}.
Therefore, \eqref{eq:approx-condition} implies that $u_1^*=0$ and $u_2^*=1$. By Proposition~\ref{prop:q-Bass-operator-stability}, we have
\[
	U_{\mu}(p) = \lim_{n \rightarrow \infty} \Mcal(Q_{k_n})(p) = \Mcal(Q^*)(p), \qquad \text{for all } p \in [0,1].
\]

It remains to verify that $Q^*$ is $\lambda$-a.e.\ constant on every level set of $Q_{\mu}$. Let $x$ be an atom of $\mu$ and set $I_x:=\{u\in(0,1):Q_\mu(u)=x\}$. Fix a compact interval $J$ contained in the interior of $I_x$. Since $U_{\mu_k}$ is the piecewise affine interpolation of $U_\mu$ on the grid $\{i/k:i=0,\ldots,k\}$, for all sufficiently large $k$ the function $Q_{\mu_k}$ is constant on $J$. Consequently, $Q_k$ is constant on $J$. Passing to the pointwise limit, we conclude that $Q^*$ is constant on $J$. Since $J$ is arbitrary, $Q^*$ is constant on the interior of $I_x$, and hence $\lambda$-a.e.\ on $I_x$. As the atoms of $\mu$ are at most countable, $Q^*$ is $\lambda$-a.e.\ constant on every level set of $Q_\mu$.

\noindent\emph{Step 2: Let \ref{it:existence-2} hold. Having proved Step 1, w.l.o.g. we can assume that $(\mu, \nu)$ is not irreducible.} Denote by $u_1 < \dots <u_{|K|}$ the elements of $K$ and let $u_0:=0$, $u_{|K|+1}:=1$, and $a_i:=u_i-u_{i-1}$. For each $i=1,\dots,|K|+1$, let $(\mu_i,\nu_i)$ be the normalized irreducible component corresponding to $(u_{i-1},u_i)$, and let $\Mcal_i$ denote the associated $q$-Bass operator. Thus
\begin{equation}
\label{eq:irreducible_decomposition}
    Q_{\mu_i}(s)=Q_\mu(u_{i-1}+a_i s),\qquad Q_{\nu_i}(s)=Q_\nu(u_{i-1}+a_i s),\qquad s\in(0,1),
\end{equation}
and, by applying Step 1 to each irreducible component, we obtain $Q_i^* \in \Mon((0,1),\RR)$ such that
\[
\Mcal_i(Q_i^*)=U_{\mu_i}.
\]
Moreover, $Q_i^*$ may be chosen to be constant on each level set of $Q_{\mu_i}$.

Set $r_i:=Q_i^*(1^-)$ for $i=1,\dots,|K|$ and $\ell_i:=Q_i^*(0^+)$ for $i=2,\dots,|K|+1$. By \eqref{eq:compact-support-components} and Proposition~\ref{prop:compact-support-fixed-point} below,  these quantities are finite and, therefore, the functions $Q_i^*$ are bounded. Choose $c_1:=0$ and, recursively,
\[
c_{i+1}:=c_i+r_i-\ell_{i+1}+D,\qquad i=1,\dots,|K|,
\]
where $D = \diam(\supp(q))$. Extending $Q_i^*$ at $1$ by $Q_i^*(1):=Q_i^*(1^-)$ for $i=1,\dots,|K|$, define $\hat Q:(0,1)\to\RR$ by
\begin{equation}
\label{eq:Q_hat_reducible}
\hat Q(u):=\begin{cases}Q_i^*\left(\dfrac{u-u_{i-1}}{a_i}\right)+c_i,&u\in(u_{i-1},u_i],\quad i=1,\dots,|K|, \\ 
Q_{|K|+1}^*\left(\dfrac{u-u_{|K|}}{a_{|K|+1}}\right)+c_{|K|+1},&u\in(u_{|K|},1).
\end{cases}
\end{equation}
Then $\hat Q$ is non-decreasing and left-continuous, and
\[
\Delta\hat Q(u_i)=\hat Q(u_i^+)-\hat Q(u_i)=D,\qquad i=1,\dots,|K|.
\]

We now verify that $\hat Q$ solves the global problem. By construction, $\Delta\hat Q(u_i)=D$ for every $i=1,\dots,|K|$. Hence, Proposition~\ref{prop:Bass_operator_properties} and the definition of $K$ yield
\[
\Mcal(\hat Q)(u_i)=U_\nu(u_i)=U_\mu(u_i).
\]
Let $p=u_{i-1}+a_i t$ for some $i\in\{1,\dots,|K|+1\}$ and $t\in[0,1]$. The gaps of length $D$ imply that, whenever $u$ belongs to the $i$-th irreducible component, the terms corresponding to earlier components contribute $1$ to the inner distribution function, while those corresponding to later components contribute $0$. Thus, writing $u=u_{i-1}+a_i s$,
\[
\int_0^1F_q\bigl(z+\hat Q(u)-\hat Q(v)\bigr)\,dv=u_{i-1}+a_i\int_0^1F_q\bigl(z+Q_i^*(s)-Q_i^*(r)\bigr)\,dr.
\]
Therefore, after rescaling,
\[
\Mcal(\hat Q)(p)-\Mcal(\hat Q)(u_{i-1})=a_i\Mcal_i(Q_i^*)(t)=a_iU_{\mu_i}(t).
\]
Since $\Mcal(\hat Q)(u_{i-1})=U_\mu(u_{i-1})$ and $U_\mu(p)-U_\mu(u_{i-1})=a_iU_{\mu_i}(t)$, we conclude that
\[
\Mcal(\hat Q)(p)=U_\mu(p),\qquad p\in[0,1].
\]
In particular, since each $Q_i^*$ is constant on every level set of $Q_{\mu_i}$, the reparametrization in \eqref{eq:irreducible_decomposition} and \eqref{eq:Q_hat_reducible} imply that $\hat Q$ is constant on the intersection of every level set of $Q_\mu$.
\end{proof}

We can now prove the existence of the
$q$-Bass martingales stated in the Introduction.

\begin{proof}[Proof of Theorem~\ref{thm:q-bass-existence}]
If $\nu$ is a Dirac measure, then $\mu \preceq_c \nu$ implies $\mu=\nu$. Thus, the existence of a $q$-Bass martingale is immediate: when both marginals are Dirac measures, the product coupling $\mu\otimes\nu$ is itself a $q$-Bass martingale. So w.l.g. we restrict to the case of $\nu$ not Dirac.

By \cite[Theorem~1.4]{AcMa26_semidiscrete}, the existence of a $q$-Bass martingale from $\mu$ to $\nu$ is equivalent to the existence of a function $Q\in\Mon((0,1),\RR)$ such that
\begin{equation}
\label{mainFixedPointEq}
\int_\RR Q_\nu\left(\int_0^1F_q\bigl(Q(u)-Q(v)+z\bigr)\,dv\right)\rho_q(z)\,dz=Q_\mu(u),\qquad\text{for $\lambda$-a.e. }u\in(0,1),
\end{equation}
and the map
\[
S_Q(x):=\int_\RR Q_\nu\left(\int_0^1F_q\bigl(x-Q(v)+z\bigr)\,dv\right)\rho_q(z)\,dz
\]
is increasing $\alpha$-a.s., where $\alpha:=Q_\#\lambda_{|(0,1)}$.
By the change of variables $z\mapsto z-Q(u)$, equation \eqref{mainFixedPointEq} is equivalent to
\[
\int_\RR Q_\nu\left(\int_0^1F_q\bigl(z-Q(v)\bigr)\,dv\right)\rho_q\bigl(z-Q(u)\bigr)\,dz=Q_\mu(u),\qquad\text{for $\lambda$-a.e. }u\in(0,1).
\]
Integrating over $(0,p)$, we conclude that \eqref{mainFixedPointEq} is equivalent to
\[
\int_0^p\int_\RR Q_\nu\left(\int_0^1F_q\bigl(z-Q(v)\bigr)\,dv\right)\rho_q\bigl(z-Q(u)\bigr)\,dz\,du=U_\mu(p),\qquad p\in[0,1].
\]
By Fubini's theorem,
\[
\begin{aligned}
\int_0^p\int_\RR Q_\nu&\left(\int_0^1F_q\bigl(z-Q(v)\bigr)\,dv\right)\rho_q\bigl(z-Q(u)\bigr)\,dz\,du\\
&=\int_\RR Q_\nu\left(\int_0^1F_q\bigl(z-Q(v)\bigr)\,dv\right)\int_0^p\rho_q\bigl(z-Q(u)\bigr)\,du\,dz = \Mcal(Q)(p).
\end{aligned}
\]
Conversely, if $\Mcal(Q)=U_\mu$, then both sides are absolutely continuous on $[0,1]$, and differentiating this identity yields \eqref{mainFixedPointEq} for $\lambda$-a.e. $u\in(0,1)$. Therefore, \eqref{mainFixedPointEq} is equivalent to  \eqref{eq:fixed_point_new_eq}.

By Theorem~\ref{thm:general-existence-convex-param}, there exists $Q\in\Mon((0,1),\RR)$ satisfying \eqref{eq:fixed_point_new_eq} and constant $\lambda$-a.e. on the level sets of $Q_\mu$. Since $S_Q(Q(u))=Q_\mu(u)$
for almost every $u\in(0,1)$, we may choose a set $E\subseteq(0,1)$ of full measure under $\lambda$ on which this identity holds and $Q$ is constant on each level set of $Q_\mu$. If $u,v\in E$ and $Q(u)<Q(v)$, then $u<v$ and $Q_\mu(u)\leq Q_\mu(v)$, with equality excluded by the constancy property. Hence, $S_Q$ is increasing on the $\alpha$-full set $Q(E)$, where $\alpha=Q_\#\text{Unif}_{(0,1)}$. Thus, \eqref{mainFixedPointEq} holds, and \cite[Theorem~1.4]{AcMa26_semidiscrete} yields a $q$-Bass martingale from $\mu$ to $\nu$.
\end{proof}

\subsection{Uniqueness of Fixed-Point Solutions}

In the semidiscrete setting, uniqueness relies on the fact that the $n$-atomic $q$-Bass map is the gradient of a strictly concave potential. Under Assumption~\ref{ass:technical-assumptions}, this yields the injectivity of the map and hence uniqueness, up to translation, of the corresponding Bass distribution; see \cite[Proposition~2.9 and Theorem~1.12]{AcMa26_semidiscrete}.
To extend this argument to general initial marginals, in the proof of Theorem~\ref{thm:uniqueness-fixed-point-sol} we introduce an infinite-dimensional potential associated with the $q$-Bass operator with respect to $\nu$. We show that this potential is twice Fr\'echet differentiable, that its first differential recovers $\Mcal$, and that its second differential is negative definite modulo constant directions. Under Assumptions~\ref{ass:technical-assumptions} and~\ref{ass:technical-assumptions-2}, this yields the injectivity of $\Mcal$ modulo additive constants on the relevant class of quantile functions, and hence the uniqueness, up to translation, of the fixed-point distribution.

\begin{proof}[Proof of Theorem~\ref{thm:uniqueness-fixed-point-sol}]
By Assumptions~\ref{ass:technical-assumptions},\ref{ass:technical-assumptions-2} and Proposition~\ref{prop:compact-support-fixed-point} below, every fixed-point distribution is compactly supported. Hence, when studying the solutions, we may restrict the operator $\Mcal$ to
\[
    \Mon((0,1), \RR) \cap L^\infty(0,1).
\]
We now introduce the functional
\[
    \Vcal: L^\infty(0,1) \to \RR,
    \qquad
    \Vcal (Q) = \int_\RR U_\nu\left( \int_0^1 F_q(z-Q(v))\, dv \right) dz.
\]
This is the infinite-dimensional analogue of the potential considered in \cite[Proposition~2.9]{AcMa26_semidiscrete}. Exactly as in the proof of \cite[Proposition~2.9]{AcMa26_semidiscrete}, Assumption~\ref{ass:A3} guarantees that $\Vcal(Q)$ is well defined for every $Q \in L^\infty(0,1)$.

Let $Q, f \in L^\infty(0,1)$. By the same differentiation argument used in \cite[Proposition~2.9]{AcMa26_semidiscrete}, together with \cite[Lemma A.1]{AcMa26_semidiscrete} and Fubini's theorem,  we find that the Gâteaux derivative of $\Vcal$ at $Q$ along the direction $f$ is given by
\begin{equation}
\label{eq:gateaux_deriv}
    \begin{aligned}
    \frac{d}{d\varepsilon}\Vcal(Q+\varepsilon f)\bigg|_{\varepsilon=0}
    &=
    -\int_\RR Q_\nu\left(\int_0^1 F_q(z-Q(v))\, dv\right)\int_0^1 \rho_q(z-Q(u))f(u)\,du\,dz \\
    &=
    -\int_{[0,1]\times\RR}
    Q_\nu\left(\int_0^1 F_q(z+Q(u)-Q(v))\,dv\right)\rho_q(z)f(u)\,dz\,du,
\end{aligned}
\end{equation}
where in the second line we performed the change of variables $z \mapsto z+Q(u)$.
This expression defines a bounded linear functional of $f$. Moreover, adapting the proof of \cite[Lemma~2.8]{AcMa26_semidiscrete}, the assumptions $\nu\in\Pcal_a(\RR)$ and $\rho_q\in L^{\frac{a}{a-1}}(\RR)$ imply that
\[
    \sup_{\|f\|_\infty\leq1}\left|\frac{d}{d\varepsilon}\Vcal(Q_n+\varepsilon f)\Big|_{\varepsilon=0}-\frac{d}{d\varepsilon}\Vcal(Q+\varepsilon f)\Big|_{\varepsilon=0}\right|\longrightarrow0
\]
whenever $Q_n\to Q$ in $L^\infty(0,1)$. Thus, the Gâteaux derivative in \eqref{eq:gateaux_deriv} depends continuously on $Q$ in the norm of $(L^\infty(0,1))^*$. Therefore, $\Vcal$ is Fr\'echet differentiable, with differential
\[
    D_Q\Vcal(f)
    =
    -\int_{[0,1]\times\RR} Q_\nu\Bigg(\int_0^1 F_q(z+Q(u)-Q(v))  dv\Bigg)\rho_q(z)f(u)\,dz\,du.
\]

We now compute the second-order differential. Differentiating  $D_Q\Vcal(f)$ in the direction $g \in L^\infty(0,1)$, and arguing exactly as in \cite[Lemma~2.8]{AcMa26_semidiscrete},  we obtain the Gâteaux derivative of $D_Q\Vcal(f)$ at $Q$ along the direction $g$:
\begin{equation}
\label{eq:gateaux_deriv_2}
    \begin{aligned}
    \frac{d}{d \varepsilon} D_{Q+\varepsilon g}\Vcal(f)\bigg |_{\varepsilon =0}
    = - \int_{[0,1]^2 \times \RR} & Q'_\nu\Bigg( \int_0^1 F_q(z+Q(u)-Q(v))  dv \Bigg)  \\
    &\times \rho_q(z) \rho_q(z+Q(u)-Q(w)) f(u)[g(u)-g(w)]\, dz\, du\, dw.
\end{aligned}
\end{equation}
Denote by $B_Q(f,g)$ the right-hand side of \eqref{eq:gateaux_deriv_2}. Assumption~\ref{ass:A4} and the estimates used in the proof of \cite[Proposition~2.9]{AcMa26_semidiscrete} show that $B_Q$ is a bounded bilinear form and that
\[
\sup_{\substack{\|f\|_\infty \leq 1 \\ \|g\|_\infty \leq 1}} |B_{Q_n}(f,g) - B_Q(f,g)| \longrightarrow 0
\]
whenever $Q_n \to Q$ in $L^\infty(0,1)$. Hence $Q\mapsto D_Q\Vcal$ is Frèchet differentiable, with $D_Q^2\Vcal=B_Q$.
After the change of variables $z\mapsto z+Q(u)$, we may therefore write
\[
D^2_Q\Vcal(f,g)=-\int_{[0,1]^2}f(u)[g(u)-g(w)]K_Q(u,w)\,du\,dw,
\]
where we set
\[
K_Q(u,w):=\int_\RR Q'_\nu\Bigg(\int_0^1F_q(z-Q(v))\,dv\Bigg)\rho_q(z-Q(u))\rho_q(z-Q(w))\,dz.
\]
We now take $g=f$. Since $K_Q$ is symmetric, we can symmetrize the previous expression and obtain
\[
D^2_Q\Vcal(f,f)=-\frac12\int_{[0,1]^2}|f(u)-f(w)|^2K_Q(u,w)\,du\,dw.
\]
By Assumption~\ref{ass:technical-assumptions}, $K_Q\geq0$ and
\begin{equation}
\label{eq:connection_equation_differetial}
K_Q(u,w)>0\qquad\text{for a.e. $(u,w)\in[0,1]^2$ such that }|Q(u)-Q(w)|<\diam(\supp(q)).
\end{equation}
In particular, if
\[
Q\in\Mon((0,1),\RR)\cap L^\infty(0,1)\qquad\text{and}\qquad\sup_{p\in(0,1)}\Delta Q(p)<\diam(\supp(q)),
\]
\eqref{eq:connection_equation_differetial} implies that, for any fixed $u\in(0,1)$, the map $w\mapsto K_Q(u,w)$ is positive almost everywhere in a neighbourhood of $u$. 
Hence
\[
D^2_Q\Vcal(f,f)\leq0,
\]
with equality if and only if $f$ is constant almost everywhere. Therefore, $Q\mapsto D_Q\Vcal$ is strictly monotone and hence injective on
\[
\Dcal:=\Mon((0,1),\RR)\cap\left\{Q\in L^\infty(0,1):\sup_{p\in(0,1)}\Delta Q(p)<\diam(\supp(q))\right\}
\]
modulo the equivalence relation $Q_1\sim Q_2$ if $Q_1-Q_2$ is constant.

	Since $(\mu, \nu)$ is irreducible, Proposition~\ref{prop:Bass_operator_properties} implies that the quantile function of any fixed-point distribution belongs to $\Dcal$. Finally, using the identification
	\[
		\Mcal(Q)(p) = -D_Q \Vcal(\mathbf 1_{[0,p]}),
	\]
	 we conclude that the fixed-point distribution is unique up to translation, since the quantile function of any fixed-point distribution must solve \eqref{eq:fixed_point_new_eq}.
\end{proof}

\section{Stability of the $q$-Bass operator and the fixed-point distribution}
\label{sec:stability}

We now investigate the stability of the $q$-Bass construction under perturbations  of the initial and terminal marginals, as well as of the reference measure. The first step is to establish the stability of the operator $\Mcal$ when the terminal quantile functions converge pointwise, the reference densities converge in $L^1$, and its arguments form a pointwise convergent sequence of monotone functions.
Since such a sequence need not be uniformly bounded, its pointwise limit may take values in the extended real line. The proposition below therefore treats both finite-valued limits and the two possible forms of divergence at the endpoints. In the latter cases, the corresponding values of $\Mcal_k(Q_k)$ converge to the contact value $U_\nu$. This will allow us to rule out non-finite limits under irreducibility and, together with uniqueness up to translation, to deduce the stability of the normalized fixed-point distributions.

\begin{proposition}[Stability of the $q$-Bass operator]
\label{prop:q-Bass-operator-stability}
Let $\nu \in \Prob_1(\RR)$ and $q\in \Prob(\RR)$ be such that $q \ll \lambda$ and $\nu$ is not a Dirac measure. Let $(\nu_k)_{k\in\NN}\subseteq \Prob_1(\RR)$ and $(q_k)_{k\in\NN}\subseteq \Prob(\RR)$ be such that
\[
Q_{\nu_k}\to Q_\nu \quad\text{pointwise},
\qquad
|Q_{\nu_k}|\le H \text{ for some  $H\in L^1(0,1)$\  for all }k \in \NN,
\]
and
\[
\rho_{q_k}\to \rho_q \quad\text{in } L^1(\RR), \qquad q_k \ll \lambda\ \text{for all }k \in \NN.
\]
For every $k\in\NN$, let $\Mcal_k$ denote the $q_k$-Bass operator with respect to $\nu_k$, and let $\Mcal$ denote the $q$-Bass operator with respect to $\nu$.
Moreover, let $(Q_k)_{k\in\NN}\subseteq \Mon((0,1),\RR)$ be a sequence converging pointwise to some $Q\in \Mon((0,1),\overline{\RR})$. Then the following assertions hold:
\begin{enumerate}[label = (\roman*)]
    \item \label{it:q-Bass-operator-stability-1} If $Q\in \Mon((0,1),\RR)$, then
    \[
    	\Mcal_k(Q_k)(p)\longrightarrow \Mcal(Q)(p), \qquad \text{for every } p\in[0,1].
    \]

    \item \label{it:q-Bass-operator-stability-2}  If there exists $u_1^*\in(0,1)$ such that
    \[
    	Q(u)=-\infty \quad \text{for every } u<u_1^*, \qquad Q(u)>-\infty \quad \text{for every } u>u_1^*,
    \]
    then
    \[
    	\Mcal_k(Q_k)(u_1^*)\longrightarrow U_\nu(u_1^*).
    \]

    \item \label{it:q-Bass-operator-stability-3}  If there exists $u_2^*\in(0,1)$ such that
    \[
    	Q(u)<+\infty \quad \text{for every } u<u_2^*, \qquad Q(u)=+\infty \quad \text{for every } u>u_2^*,
    \]
    then
    \[
    	\Mcal_k(Q_k)(u_2^*)\longrightarrow U_\nu(u_2^*).
    \]
\end{enumerate}
\end{proposition}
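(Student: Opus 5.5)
The plan is to write everything in the ``$u$-first'' form used in the proof of Proposition~\ref{prop:Bass_operator_properties}. For a monotone $Q$, a density $\rho$ with CDF $F$, and $p\in[0,1]$, set
\[
G_{Q,F}(z,u):=\int_0^1F\bigl(z+Q(u)-Q(v)\bigr)\,dv,\qquad
\Mcal(Q)(p)=\int_0^p\int_\RR Q_\nu\bigl(G_{Q,F_q}(z,u)\bigr)\rho_q(z)\,dz\,du .
\]
The key structural fact is that, under the measure $\rho(z)\,dz\,du$ on $\RR\times(0,1)$, the map $G_{Q,F}$ is exactly $F_{\alpha\ast\xi}(X_0+Z)$, where $\alpha=Q_\#\lambda$, $X_0\sim\alpha$ and $Z\sim\xi:=\rho\,d\lambda$ are independent. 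Since $\alpha\ast\xi$ is atomless, this law is $\mathrm{Unif}(0,1)$. Restricting to $u<p$ therefore gives a pushforward with density at most $1$ with respect to $\lambda$. This holds uniformly in $k$ and costs nothing.

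With this fact, the errors in the integrands are handled by an $L^1$ trick. First I replace $\rho_{q_k}$ by $\rho_q$ in the outer weight. Then I replace $Q_{\nu_k}$ by $Q_\nu$, at a cost of at most $\|Q_{\nu_k}-Q_\nu\|_{L^1(0,1)}$, which tends to $0$ by dominated convergence with the majorant $H$. Next I replace $Q_\nu$ by a bounded continuous $h$ with $\|Q_\nu-h\|_{L^1}<\varepsilon$; the density bound again gives an error of at most $\varepsilon$, uniformly in $k$. The outer-weight replacement is the step that needs some care, because $Q_{\nu_k}$ is unbounded. I would do it after truncating: the bounded $h$ together with $\|\rho_{q_k}-\rho_q\|_{L^1}\to0$ gives an error of at most $\|h\|_\infty\|\rho_{q_k}-\rho_q\|_1$. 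The unbounded remainder is controlled by the density bound, applied separately under $\rho_{q_k}$ and under $\rho_q$. The net effect is a reduction to showing that
\[
\int_0^p\int_\RR h\bigl(G_k(z,u)\bigr)\rho_q(z)\,dz\,du
\]
converges to the desired limit, where $G_k:=G_{Q_k,F_{q_k}}$.

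For (i), $L^1$ convergence of densities gives $F_{q_k}\to F_q$ uniformly, and $F_q$ is continuous. Together with $Q_k(u)-Q_k(v)\to Q(u)-Q(v)$ pointwise, dominated convergence in $v$ yields $G_k(z,u)\to G_{Q,F_q}(z,u)$ for every $(z,u)$. Bounded convergence for $h$ then gives the claim, and undoing the approximations yields $\Mcal_k(Q_k)(p)\to\Mcal(Q)(p)$.

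For (ii), set $p=u_1^*$ and split $G_k=A_k+R_k$, where $A_k$ is the integral over $v<u_1^*$ and $R_k$ the integral over $v>u_1^*$. For $u<u_1^*<v$ we have $Q_k(u)-Q_k(v)\to-\infty$, because $Q_k(u)\to-\infty$ while $Q_k(v)$ is eventually bounded above by $Q_k(v')$ for some $v'>v$ with $Q(v')<\infty$. If that fails, the $+\infty$ part only helps. Tightness of $(q_k)$, which follows from $L^1$ convergence, gives $F_{q_k}(z+Q_k(u)-Q_k(v))\to0$, hence $R_k(z,u)\to0$ pointwise on $\RR\times(0,u_1^*)$. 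The identity $\int_0^{p}\int Q_{\nu_k}(A_k)\rho_{q_k}=U_{\nu_k}(p)$ is exactly the computation behind $\Mcal(Q)\ge U_\nu$. Moreover $A_k$ also has pushforward density at most $1$, since it is the CDF of the sub-probability part. So the same replacements reduce the claim to showing
\[
\int_0^{u_1^*}\int\bigl[h(A_k+R_k)-h(A_k)\bigr]\rho_q\,dz\,du\to0,
\]
which follows from uniform continuity of $h$ on $[0,1]$ and bounded convergence. Finally, $U_{\nu_k}(u_1^*)\to U_\nu(u_1^*)$ by dominated convergence. Part (iii) is symmetric: for $u<u_2^*$, the $v>u_2^*$ terms give $F_{q_k}\to0$, and one compares with $\int_0^{u_2^*}$.

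I expect the main obstacle to be the uniform-integrability bookkeeping. The $\rho_{q_k}$ versus $\rho_q$ replacement must be done only after truncating $Q_\nu$, and the density-at-most-$1$ bound for $A_k$ (not only for $G_k$) must be justified, since $A_k$ is the distribution function of $(\alpha_k|_{(0,u_1^*)})\ast q_k$. Care is also needed in (ii) when $Q(v)=+\infty$ for some $v>u_1^*$.
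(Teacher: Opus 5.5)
Your proof is correct. Its skeleton matches the paper's. The law of the inner CDF under $\rho_{q_k}(z)\,dz\,du$ restricted to $u<p$ has density at most $1$, and $Q_\nu$ is approximated in $L^1(0,1)$ by nicer functions. The inner argument converges pointwise, and in (ii)/(iii) the $v<p$ part is exactly uniform on $(0,p)$ while the $v>p$ part vanishes; your $A_k$ is the paper's $S_{k,p}$.

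The genuine difference is how the varying reference density is handled.
\begin{itemize}
\item The paper uses Skorokhod's representation to realize $Z_k\sim q_k$ converging a.s.\ to $Z\sim q$. It proves a.s.\ convergence of $T_{k,p}=I_{k,1}(Q_k(U_p)+Z_k)$. It then invokes the fact that weak convergence with uniformly bounded densities gives $\EE[g(Y_k)]\to\EE[g(Y)]$ for every $g\in L^1(0,1)$.
\item You stay deterministic. You first replace $Q_{\nu_k}$ by a continuous $h$ on $[0,1]$, using the density bound under $\rho_{q_k}$. Only then do you swap the weight $\rho_{q_k}$ for $\rho_q$, at cost $\|h\|_\infty\|\rho_{q_k}-\rho_q\|_1$. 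After that the integration measure is fixed, and bounded convergence in $(z,u)$ suffices.
\end{itemize}
Your route avoids Skorokhod and the auxiliary lemma. The price is the ordering constraint you flag in your last paragraph. Under $\rho_q(z)\,dz\,du$, the law of $G_k$ is $(F_{\alpha_k\ast q_k})_\#(\alpha_k\ast q)$. This need not have a bounded density; it can even have atoms where $F_{\alpha_k\ast q_k}$ is flat but $\alpha_k\ast q$ has mass. So in a write-up, drop the sentence in your second paragraph announcing that you swap the weight first. The density bound under $\rho_q$ is needed only for the limit object $G_{Q,F_q}$, when undoing the approximation in (i).

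There is one small slip in (ii). For $u<u_1^*<v$, the limit $Q_k(u)-Q_k(v)\to-\infty$ holds because $Q(v)>-\infty$ forces $Q_k(v)$ to be eventually bounded \emph{below}. An upper bound on $Q_k(v)$ via some $v'>v$ is irrelevant here. The case $Q(v)=+\infty$ also needs no special care, since it only makes the difference more negative.
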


\begin{proof}
By the dominated convergence theorem,
\[
	\|Q_{\nu_k}-Q_\nu\|_{L^1(0,1)}\longrightarrow 0.
\]
Moreover,
\[
\sup_{x\in\RR}|F_{q_k}(x)-F_q(x)| \leq \|\rho_{q_k}-\rho_q\|_{L^1(\RR)}\longrightarrow 0.
\]
In particular, $q_k$ converges weakly to $q$. By the Skorokhod representation theorem, up to enlarging the underlying probability space, we may assume that there exist random variables $(Z_k)_{k\in\NN}$ and $Z$ such that
\[
Z_k\sim q_k,\qquad Z\sim q,\qquad Z_k\to Z \quad \text{a.s.}
\]

For every $k\in\NN$, every $p\in(0,1]$, and every $x\in\RR$, define
\[
I_{k,p}(x):=\int_0^p F_{q_k}(x-Q_k(v))\,dv.
\]
Whenever $Q((0,1))\subseteq\RR$, define also $I_p(x):=\int_0^p F_q(x-Q(v))\,dv$. Fix $p\in(0,1]$, let $U_p\sim \mathrm{Unif}_{[0,p]}$, and assume that $U_p$ is independent of $Z$ and of the whole family $(Z_k)_{k\in\NN}$. Set
\[
X_{k,p}:=Q_k(U_p)+Z_k,
\qquad
T_{k,p}:=I_{k,1}(X_{k,p}).
\]
By Fubini's theorem,
\begin{equation}
\label{eq:Mcal-prob-repr-stab}
\Mcal_k(Q_k)(p)=p\,\EE\bigl[Q_{\nu_k}(T_{k,p})\bigr].
\end{equation}
Since $q_k\ll\lambda$, the law of $X_{k,p}$ is absolutely continuous, with CDF
\[
F_{X_{k,p}}(x)=\frac{1}{p}I_{k,p}(x),
\qquad x\in\RR.
\]
Hence $X_{k,p}$ has density $\frac1p I_{k,p}'$, where
\[
I_{k,p}'(x)=\int_0^p \rho_{q_k}(x-Q_k(v))\,dv,
\]
for a.e.\ $x\in\RR$. In particular,
\[
0\le I_{k,p}'(x)\le I_{k,1}'(x)
\qquad\text{for a.e. }x\in\RR.
\]
Therefore, for every interval $J\subseteq(0,1)$,
\[
P(T_{k,p}\in J) = \frac 1 p\int_{\RR}  \mathbf{1}_{J}(I_{k,1}(x)) I_{k,p}'(x)\,dx \leq \frac 1 p\int_{\RR} \mathbf{1}_{J}(I_{k,1}(x)) I_{k,1}'(x)\,dx = \frac{|J|}{p}.
\]
Thus the law of $T_{k,p}$ is absolutely continuous with density bounded by $1/p$.

We shall use the following elementary fact: if $Y_k$ converges to $Y$ in law and the laws of $Y_k$ and $Y$ are absolutely continuous with densities bounded by some constant $C>0$, then
\[
\EE[g(Y_k)]\longrightarrow \EE[g(Y)]
\qquad\text{for every } g\in L^1((0,1)).
\]
Indeed, this follows by approximating $g$ in $L^1((0,1))$ with bounded continuous functions.

\medskip
\noindent\emph{Proof of~\ref{it:q-Bass-operator-stability-1}.}
Fix $p\in(0,1]$. Since $Q((0,1))\subseteq\RR$, set $X_p:=Q(U_p)+Z$ and $T_p:=I_1(X_p)$.
Again by Fubini's theorem,
\[
\Mcal(Q)(p)=p\,\EE\bigl[Q_\nu(T_p)\bigr].
\]
Moreover, the same argument as above shows that the law of $T_p$ is absolutely continuous with density bounded by $1/p$. We claim that $T_{k,p}\longrightarrow T_p$ a.s.
Indeed, for every $v\in(0,1)$,
\[
	F_{q_k}\bigl(Q_k(U_p)+Z_k-Q_k(v)\bigr) \longrightarrow F_q\bigl(Q(U_p)+Z-Q(v)\bigr) \qquad\text{a.s.}
\]
Since $0\le F_{q_k}\le 1$, the dominated convergence theorem yields
\[
T_{k,p}
=
\int_0^1 F_{q_k}\bigl(Q_k(U_p)+Z_k-Q_k(v)\bigr)\,dv
\longrightarrow
\int_0^1 F_q\bigl(Q(U_p)+Z-Q(v)\bigr)\,dv
=
T_p
\qquad\text{a.s.}
\]
Thus $T_{k,p}$ converges to $T_p$ in $L^1$ and, therefore, in measure. Therefore,
\[
\begin{aligned}
\bigl|\EE[Q_{\nu_k}(T_{k,p})]-\EE[Q_\nu(T_p)]\bigr|
&\le
\bigl|\EE[(Q_{\nu_k}-Q_\nu)(T_{k,p})]\bigr|
+\bigl|\EE[Q_\nu(T_{k,p})]-\EE[Q_\nu(T_p)]\bigr| \\
&\le
\frac1p\|Q_{\nu_k}-Q_\nu\|_{L^1(0,1)}
+\bigl|\EE[Q_\nu(T_{k,p})]-\EE[Q_\nu(T_p)]\bigr|.
\end{aligned}
\]
For $k\to\infty$, the first term tends to $0$, and the second one tends to $0$ by the observation above with $g=Q_\nu$. Using \eqref{eq:Mcal-prob-repr-stab}, we conclude that $
\Mcal_k(Q_k)(p)\longrightarrow \Mcal(Q)(p)$, for every $p\in(0,1]$.

\medskip
\noindent\emph{Proof of~\ref{it:q-Bass-operator-stability-2}.}
Set $p:=u_1^*$ and define
\[
S_{k,p}:=I_{k,p}(X_{k,p})
=
\int_0^p F_{q_k}\bigl(Q_k(U_p)+Z_k-Q_k(v)\bigr)\,dv.
\]
Since $F_{X_{k,p}}=I_{k,p}/p$ is continuous, it holds that
\[
S_{k,p}=p\,F_{X_{k,p}}(X_{k,p})\sim \mathrm{Unif}_{[0,p]}
\qquad\text{for every }k\in\NN.
\]
We claim that
\[
T_{k,p}-S_{k,p}\longrightarrow 0
\qquad\text{a.s.}
\]
Indeed, since $U_p<p$ almost surely and $Q(u)=-\infty$ for every $u<p$, one has
\[
Q_k(U_p)\longrightarrow -\infty
\qquad\text{a.s.}
\]
On the other hand, for every $v>p$ one has $Q(v)>-\infty$, hence
\[
Q_k(U_p)+Z_k-Q_k(v)\longrightarrow -\infty
\qquad\text{a.s.}
\]
and therefore
\[
F_{q_k}\bigl(Q_k(U_p)+Z_k-Q_k(v)\bigr)\longrightarrow 0
\qquad\text{a.s.}
\]
for every $v>p$. Since $0\le F_{q_k}\le 1$, the dominated convergence theorem gives
\[
T_{k,p}-S_{k,p}
=
\int_p^1 F_{q_k}\bigl(Q_k(U_p)+Z_k-Q_k(v)\bigr)\,dv
\longrightarrow 0
\qquad\text{a.s.}
\]
As $S_{k,p}\sim \mathrm{Unif}_{[0,p]}$ for every $k$, it follows that $T_{k,p}$ converges to $\mathrm{Unif}_{[0,p]}$ weakly. Now, let $U\sim \mathrm{Unif}_{[0,p]}$. Then
\[
\begin{aligned}
\left|\EE[Q_{\nu_k}(T_{k,p})]-\frac1 p U_\nu(p)\right|
&\le
\bigl|\EE[(Q_{\nu_k}-Q_\nu)(T_{k,p})]\bigr|
+\left|\EE[Q_\nu(T_{k,p})]-\EE[Q_\nu(U)]\right| \\
&\le
\frac1p\|Q_{\nu_k}-Q_\nu\|_{L^1(0,1)}
+\left|\EE[Q_\nu(T_{k,p})]-\EE[Q_\nu(U)]\right|.
\end{aligned}
\]
The first term tends to $0$, and the second one tends to $0$ by the observation above. Therefore,
\[
\Mcal_k(Q_k)(u_1^*) = p\EE[Q_{\nu_k}(T_{k,p})]
\longrightarrow U_\nu(u_1^*).
\]

\medskip

\noindent\emph{Proof of~\ref{it:q-Bass-operator-stability-3}.}
The proof is identical to that of \ref{it:q-Bass-operator-stability-2}, replacing the assumption $Q(u)=-\infty$ for $u<u_1^*$, with $Q(u)=+\infty$ for $u>u_2^*$.
One again writes
\[
T_{k,p}-S_{k,p}
=
\int_p^1 F_{q_k}\bigl(Q_k(U_p)+Z_k-Q_k(v)\bigr)\,dv,
\qquad p:=u_2^*,
\]
and observes that the integrand converges almost surely to $0$ for every $v>p$, because now $Q_k(v)\to+\infty$. The rest of the argument is unchanged, and yields
\[
\Mcal_k(Q_k)(u_2^*)\longrightarrow U_\nu(u_2^*).
\]
\end{proof}

We can now use the above proposition, together with uniqueness, to establish the stability of the fixed-point distribution stated in the Introduction.

\begin{proof}[Proof of Theorem~\ref{thm:fixed-point-stability}]
	By Theorem~\ref{thm:q-bass-existence}, there exists a $q$-Bass martingale from $\mu$ to $\nu$. By Theorem~\ref{thm:uniqueness-fixed-point-sol}, we denote by $\alpha \in \Prob(\RR)$ the unique fixed-point distribution of this $q$-Bass martingale such that $Q_\alpha(1/2)=0$.
	For any $k \in \NN$, let $\Mcal_k$ be the $q_k$-Bass operator with respect to $\nu_k$. 
    Since $\alpha_k\in\Prob(\RR)$ is a fixed-point distribution  associated with a $q_k$-Bass martingale from $\mu_k$ to $\nu_k$, we obtain (see the proof of Theorem~\ref{thm:q-bass-existence})
	\begin{equation}
    \label{eq:stability_eq_sequence}
		\Mcal_k(Q_{\alpha_k}) = U_{\mu_k}.
	\end{equation}
	Let $(Q_{\alpha_{k_n}})_{n \in \NN}$ be a subsequence of $(Q_{\alpha_k})_{k \in \NN}$. As we argued in the proof of Theorem~\ref{thm:general-existence-convex-param}
 the sequence $(Q_{\alpha_{k_n}})_{n \in \NN}$ admits a further subsequence with pointwise limit $Q^* \in \Mon((0,1), \RR)$. Otherwise,  $(Q_{\alpha_{k_n}})_{n \in \NN}$ would admit a further subsequence with pointwise limit $Q^* \in \Mon((0,1), \overline \RR)\setminus \Mon((0,1), \RR)$, in contradiction with the assumption that $(\mu, \nu)$ is irreducible.
 
In particular, since $Q^*$ is the pointwise limit of a subsequence of $Q_{\alpha_k}$, Proposition~\ref{prop:q-Bass-operator-stability} together with \eqref{eq:stability_eq_sequence} yields
\[
    \Mcal(Q^*) = \lim_{k \to \infty} \Mcal_k(Q_{\alpha_k}) = U_{\mu}.
\]
Thus, $Q^*$ is a quantile function satisfying \eqref{eq:fixed_point_new_eq}. Since the assumptions of Theorem~\ref{thm:uniqueness-fixed-point-sol} are satisfied, the corresponding Bass distribution is unique up to translation. Hence, $Q^*$ is the quantile function of the Bass distribution for the $q$-Bass martingale from $\mu$ to $\nu$.

Moreover, since $Q_{\alpha_k}(1/2)=0$ for every $k \in \NN$, we have $Q^*(1/2)=0$. Therefore, since $Q_{\alpha}(1/2)=0$, the uniqueness of the Bass distribution up to translation implies that
	\[
		Q^* = Q_{\alpha}.
	\]
	Since any subsequence of $(Q_{\alpha_k})_{k \in \NN}$ admits a further subsequence converging to $Q_\alpha$ pointwise, then $(Q_{\alpha_k})_{k \in \NN}$ converges pointwise to $Q_\alpha$. This yields the weak convergence of $\alpha_k$ to $\alpha$.
\end{proof}

\section{Support Bounds and Invariance Properties of the Fixed-Point Distribution
}\label{sec:support}

Having established existence, uniqueness, and stability of fixed-point distributions, we now investigate what the initial and terminal marginals, together with the reference measure $q$, reveal about the diameter of their supports. This information is also relevant from a numerical perspective. In particular, knowing in advance whether a fixed-point distribution is compactly supported helps determine whether the fixed-point equation can be approximated on a bounded computational domain. 

We first relate boundedness of the quantile function of fixed-point distributions at each endpoint to the corresponding endpoints of the initial and terminal quantiles. This yields sufficient conditions for compact support and, under a mild strict monotonicity assumption on $Q_\nu$, a converse characterization. In particular, compactness of the prescribed marginals alone does not guarantee compactness of the fixed-point distribution.

We then seek quantitative information on the size of the support. To this end, we introduce the $r$-maximal curves associated with the $q$-Bass operator and use them to derive lower bounds on the diameter of any fixed-point distribution. Such estimates may help determine the range and resolution of the grid used to implement the fixed-point iteration numerically. The convergence of this iteration is studied in Section~\ref{sec:convergence}. Finally, we describe how fixed-point distributions, maximal curves, and the induced $q$-Bass coupling behave under positive affine transformations of the reference measure.

\begin{proposition}[Compact support of fixed-point solutions]
\label{prop:compact-support-fixed-point}
Let $\mu,\nu \in \Prob_1(\RR)$ and $q \in \Prob(\RR)$ be such that $\mu \preceq_c \nu$, $q \ll \lambda$ and $\nu$ is not a Dirac measure. For $\eta \in \Prob(\RR)$, set $\ell_\eta := \lim_{u \to 0^+} Q_\eta(u)$ and $r_\eta := \lim_{u \to 1^-} Q_\eta(u)$.

\begin{enumerate}[label=(\roman*)]
	\item \label{it:compact-support-fixed-point-1} If $\ell_\mu \neq \ell_\nu$, then the quantile function of every fixed-point distribution is bounded from below. If, moreover, $\ell_\nu \in \RR$ and $Q_\nu$ is increasing on a neighborhood of $0$, then the converse holds too.
	
	\item \label{it:compact-support-fixed-point-2} If $r_\mu \neq r_\nu$, then the quantile function of every fixed-point distribution is bounded from above. If, moreover, $r_\nu \in \RR$ and $Q_\nu$ is increasing on a neighborhood of $1$, then the converse holds too.
\end{enumerate}
Hence, the inequalities $\ell_\mu \neq \ell_\nu$ and $r_\mu \neq r_\nu$ together imply that every fixed-point distribution is compactly supported. If, in addition, $\ell_\nu, r_\nu \in \RR$ and $Q_\nu$ is increasing in neighborhoods of both $0$ and $1$, then the reverse implication also holds.
\end{proposition}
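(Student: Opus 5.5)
The plan is to work with the pointwise form of the fixed-point equation established in the proof of Theorem~\ref{thm:q-bass-existence}. Let $Q=Q_\alpha$ be the quantile function of a fixed-point distribution, and let $Z\sim q$. Set $G(x):=\int_0^1F_q(x-Q(v))\,dv=F_{\alpha\ast q}(x)$, which is continuous because $q\ll\lambda$. Then
\[
Q_\mu(u)=S_Q(Q(u))=\EE\bigl[Q_\nu\bigl(G(Q(u)+Z)\bigr)\bigr]\qquad\text{for }u\text{ in a set }E\subseteq(0,1)\text{ of full measure}.
\]
Both endpoint statements will come from letting $u\to0^+$ (resp.\ $u\to1^-$) along $E$ and identifying the limit by monotone convergence. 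Part \ref{it:compact-support-fixed-point-2} is the mirror image of part \ref{it:compact-support-fixed-point-1}, obtained by exchanging the roles of $0$ and $1$, so I only describe part \ref{it:compact-support-fixed-point-1}. The final compact-support statement then follows by combining the two parts.

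\emph{Direct implication.} I argue by contraposition and assume $Q(0^+)=-\infty$. Take $u\downarrow0$ along $E$. Then $Q(u)\downarrow-\infty$, so $G(Q(u)+Z)\downarrow0$ almost surely, and hence $Q_\nu(G(Q(u)+Z))\downarrow\ell_\nu$ almost surely, monotonically. The family is dominated from above by its value at a fixed $u_0\in E$, and that value is integrable since $Q_\mu(u_0)$ is finite. The monotone convergence theorem, applied to decreasing sequences, then gives $\ell_\mu=\lim_{u\to0^+}Q_\mu(u)=\ell_\nu$; this also covers the case $\ell_\nu=-\infty$. This contradicts $\ell_\mu\neq\ell_\nu$, so $Q$ is bounded from below.

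\emph{Converse.} Now assume $\ell_\nu\in\RR$, that $Q_\nu$ is increasing near $0$, and that $a:=Q(0^+)\in\RR$. The same monotone limit, using continuity of $G$, gives
\[
\ell_\mu=\EE\bigl[Q_\nu\bigl(G(a+Z)\bigr)\bigr].
\]
It therefore suffices to show that $G(a+Z)>0$ with positive probability. Indeed, on that event strict monotonicity of $Q_\nu$ near $0$ gives $Q_\nu(G(a+Z))>\ell_\nu$: if $G(a+Z)$ lies in the neighbourhood this is direct, and otherwise $Q_\nu(G(a+Z))$ is bounded below by the value at an interior point of the neighbourhood, which already exceeds $\ell_\nu$. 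Since also $Q_\nu(G(a+Z))\ge\ell_\nu$ always, we get $\ell_\mu>\ell_\nu$.

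To show $G(a+Z)>0$ with positive probability, fix $\varepsilon>0$ and choose $\delta>0$ with $Q(v)<a+\varepsilon$ for $v\in(0,\delta)$. On the event $\{Z>\inf\supp(q)+\varepsilon\}$, which has positive $q$-probability, we have $F_q(a+Z-Q(v))>0$ for every $v\in(0,\delta)$, and hence $G(a+Z)>0$.

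The main point of care is this converse step. It requires the positivity of $F_q$ just above $\inf\supp(q)$, which holds by the definition of the support; it also requires justifying the interchange of limit and expectation at the endpoint, where integrability follows from the domination by a finite value $Q_\mu(u_0)$. The rest is bookkeeping of monotone limits.
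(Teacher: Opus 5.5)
Your proposal is correct and follows essentially the same route as the paper. The direct implication goes by contraposition, with monotone convergence of $Q_\nu(G(Q(u)+Z))$ to $\ell_\nu$ dominated by the value at a fixed $u_0$. The converse uses a small window $\varepsilon$ above $\inf\supp(q)$ together with strict monotonicity of $Q_\nu$ near $0$.

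Two small points need tightening.
\begin{itemize}
\item $\varepsilon$ must be taken in $(0,\diam(\supp(q)))$; otherwise the event $\{Z>\inf\supp(q)+\varepsilon\}$ can be $q$-null.
\item $Q_\nu$ is only left-continuous, so continuity of $G$ alone does not identify the limit as $\EE[Q_\nu(G(a+Z))]$. You do not need that identification: $\ell_\mu\ge\EE[Q_\nu(G(a+Z))]$ already follows from $Q(u)\ge a$ and monotonicity, and this is all your argument uses. The paper likewise works only with a uniform lower bound, $Q_\mu(p)\ge\int_\RR Q_\nu(\delta F_q(z-\varepsilon))\rho_q(z)\,dz$.
\end{itemize}
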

\begin{proof}
Let $\alpha \in \Prob(\RR)$ be a fixed-point distribution, and let $Q_\alpha \in \Mon((0,1),\RR)$ be its quantile function. By \eqref{eq:fixed-point_equiv}, for every $p \in (0,1)$,
\[
\int_\RR Q_\nu\!\left(\int_0^1 F_q(z+Q_\alpha(p)-Q_\alpha(v))\,dv\right)\rho_q(z)\,dz = Q_\mu(p).
\]

We first prove the statement at the left endpoint. Assume that $Q_\alpha$ is not bounded from below, namely $Q_\alpha(p) \to -\infty$ as $p \to 0^+$. Let $p_n \downarrow 0$ and set $I_n(z) := \int_0^1 F_q(z+Q_\alpha(p_n)-Q_\alpha(v))\,dv$. Then $I_n(z) \downarrow 0$ for every $z \in \RR$, hence $Q_\nu(I_n(z)) \downarrow \ell_\nu$ for every $z \in \RR$.

We claim that $\int_\RR Q_\nu(I_n(z))\rho_q(z)\,dz \to \ell_\nu$. If $\ell_\nu > -\infty$, then $\ell_\nu \leq Q_\nu(I_n(z)) \leq Q_\nu(I_1(z))$ for every $n$ and $z$, and
\[
\int_\RR Q_\nu(I_1(z))\rho_q(z)\,dz = Q_\mu(p_1) < \infty.
\]
Thus dominated convergence yields $\int_\RR Q_\nu(I_n(z))\rho_q(z)\,dz \to \ell_\nu$. If instead $\ell_\nu = -\infty$, then $Q_\nu(I_1(z)) - Q_\nu(I_n(z)) \uparrow +\infty$ for every $z$, and monotone convergence applied to $\bigl(Q_\nu(I_1(z)) - Q_\nu(I_n(z))\bigr)\rho_q(z)$ gives again $\int_\RR Q_\nu(I_n(z))\rho_q(z)\,dz \to \ell_\nu$.
Since $\int_\RR Q_\nu(I_n(z))\rho_q(z)\,dz = Q_\mu(p_n)$ for every $n$, we obtain $Q_\mu(p_n) \to \ell_\nu$, hence $\ell_\mu = \ell_\nu$. Therefore, if $\ell_\mu \neq \ell_\nu$, then $Q_\alpha$ is bounded from below. 

Assume now that $\ell_\nu\in\RR$, that $Q_\nu$ is increasing on a neighborhood of $0$, and that $Q_\alpha$ is bounded from below. Fix $\varepsilon \in (0,\diam(\supp(q)))$. Since $Q_\alpha(u) \downarrow \ell_\alpha$ as $u \to 0^+$, there exists $\delta \in (0,1)$ such that $Q_\alpha(\delta) \leq \ell_\alpha + \varepsilon$. Hence, for every $p,v \in (0,\delta]$, we have $Q_\alpha(p) - Q_\alpha(v) \geq -\varepsilon$, and therefore
\[
\int_0^1 F_q(z+Q_\alpha(p)-Q_\alpha(v))\,dv \geq \int_0^\delta F_q(z-\varepsilon)\,dv = \delta F_q(z-\varepsilon).
\]
By monotonicity of $Q_\nu$,
\[
Q_\mu(p) \geq \int_\RR Q_\nu\bigl(\delta F_q(z-\varepsilon)\bigr)\rho_q(z)\,dz
\qquad\text{for every } p \in (0,\delta].
\]
Since $\varepsilon < \diam(\supp(q))$, the set $A := \{z \in \RR : F_q(z-\varepsilon) > 0\}$ has positive $q$-measure. Moreover, strict monotonicity of $Q_\nu$ near $0$ implies $Q_\nu(t) > \ell_\nu$ for every $t > 0$. Thus $Q_\nu\bigl(\delta F_q(z-\varepsilon)\bigr) \geq \ell_\nu$ for every $z$, with strict inequality on $A$, so
\[
\int_\RR Q_\nu\bigl(\delta F_q(z-\varepsilon)\bigr)\rho_q(z)\,dz > \ell_\nu.
\]
Letting $p \to 0^+$ in the previous inequality, and using $Q_\mu(p) \to \ell_\mu$, we get $\ell_\mu > \ell_\nu$.

The proof of~\ref{it:compact-support-fixed-point-2} is analogous. The final claim follows by combining the two endpoint statements.
\end{proof}

\begin{figure}
     \centering
          \begin{subfigure}{0.6\textwidth}
         \centering
         \includegraphics[width=\textwidth]{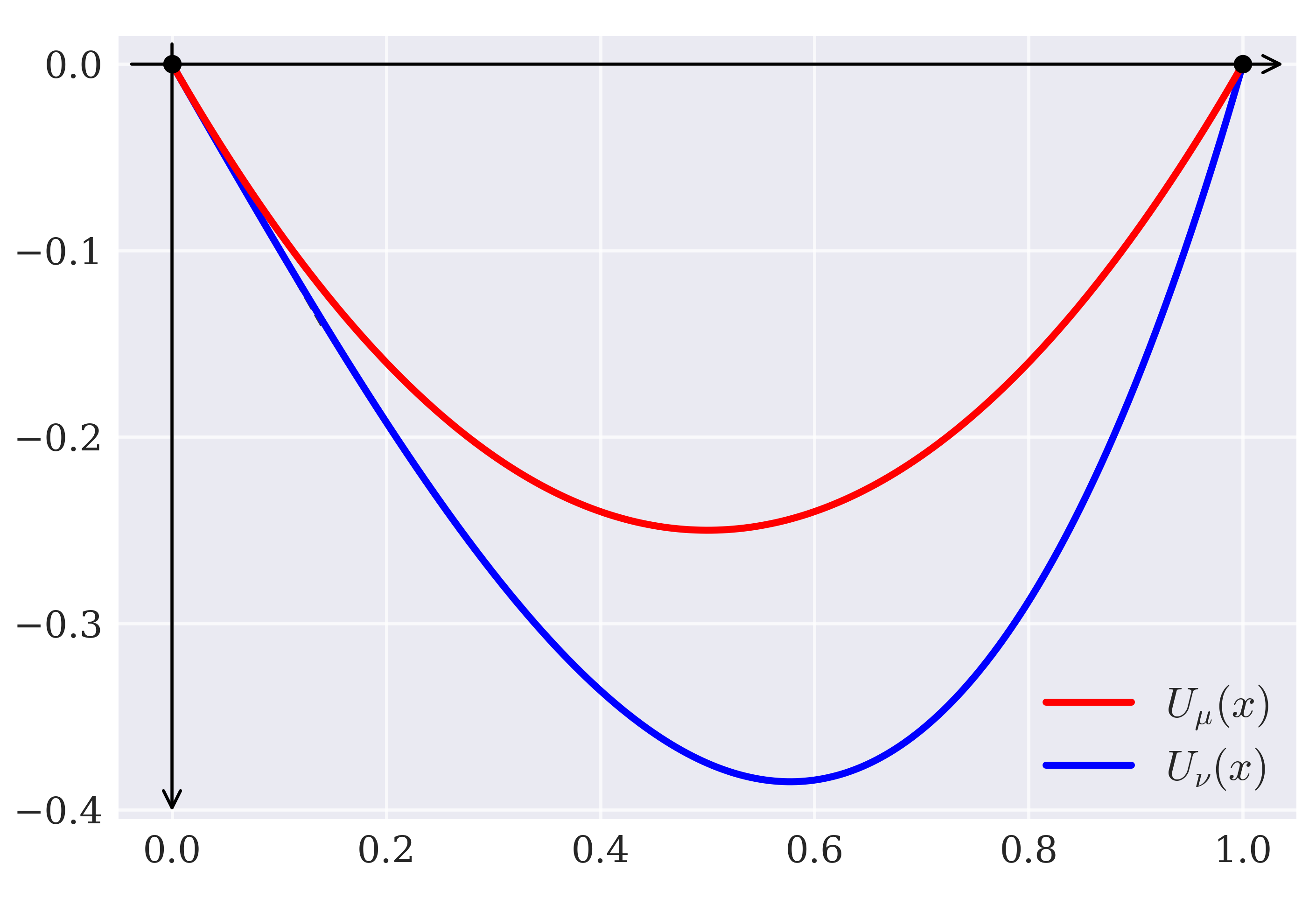}
     \end{subfigure}
	\caption{Here $\mu,\nu$ are such that $Q_\mu(u)=2u-1$, $Q_\nu(u)=3u^2-1$, and $(\mu,\nu)$ is irreducible. Although both $\mu$ and $\nu$ are compactly supported, with $\supp(\mu)=[-1,1]$ and $\supp(\nu)=[-1,2]$, Proposition~\ref{prop:compact-support-fixed-point} shows that any fixed-point distribution of a $q$-Bass martingale from $\mu$ to $\nu$ fails to be compactly supported, regardless of the choice of $q\in\Prob(\RR)$ with $q \ll \lambda$.}
     	\label{fig:non-compact-support-example}
\end{figure}

To obtain quantitative information on the support diameter from the data $\mu$, $\nu$, and $q$, we introduce the following family of $r$-maximal curves.

\begin{definition}[$r$-maximal curve]\label{def:r-maximal-curve}
	Let $\nu \in \Prob_1(\RR)$ and $q \in \Prob(\RR)$ be such that $q \ll \lambda$ and $\nu$ is not a Dirac measure. For any $r > 0$, the curve $\Gamma_r^{(\nu, q)}:[0,1] \rightarrow \RR$ defined by
	\[
		\Gamma_r^{(\nu, q)}(p) = \int_\RR Q_\nu\Big(pF_q(z)+(1-p)F_q(z-r)\Big)\, p \rho_q(z) \, dz
	\]
	is called the $r$-maximal curve associated with the $q$-Bass operator with terminal marginal $\nu$.
\end{definition}

\begin{proposition}[Lower bound on the support diameter]
\label{prop:maximal-curves-bound}
Let $r>0$, $\mu,\nu\in\Prob_1(\RR)$, and $q\in\Prob(\RR)$ be such that $q\ll\lambda$  and $\nu$ is not a Dirac measure.
Let $\Gamma_r$ be the $r$-maximal curve associated with the $q$-Bass operator with terminal marginal $\nu$.
Let $\alpha\in\Prob(\RR)$ be a fixed-point distribution of the $q$-Bass martingale from $\mu$ to $\nu$.
If there exists $p\in(0,1)$ such that
\[
	U_\mu(p)<\Gamma_r^{(\nu, q)}(p),
\]
then
\[
	\diam(\supp(\alpha))\ge r.
\]
\end{proposition}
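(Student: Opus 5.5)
The plan is to argue by contraposition. Assume $\diam(\supp(\alpha))<r$, or more generally $\diam(\supp(\alpha))\le r$. We show that then $U_\mu(p)\ge\Gamma_r^{(\nu,q)}(p)$ for every $p\in(0,1)$. Since $\alpha$ is a fixed-point distribution, $Q:=Q_\alpha$ solves $\Mcal(Q)=U_\mu$, as in the proof of Theorem~\ref{thm:q-bass-existence}. It therefore suffices to prove the purely operator-theoretic inequality $\Mcal(Q)(p)\ge\Gamma_r^{(\nu,q)}(p)$ for every $Q\in\Mon((0,1),\RR)$ with $m:=\inf Q>-\infty$ and $\sup Q\le m+r$.

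To do this I would use the probabilistic representation from the proof of Proposition~\ref{prop:q-Bass-operator-stability}. Let $U_p\sim\mathrm{Unif}_{[0,p]}$ and $Z\sim q$ be independent, and set $X:=Q(U_p)+Z$. Then
\[
\Mcal(Q)(p)=p\,\EE\bigl[Q_\nu\bigl(pF_X(X)+R(X)\bigr)\bigr],\qquad R(x):=\int_p^1F_q\bigl(x-Q(v)\bigr)\,dv .
\]
Here $F_X(x)=\frac1p\int_0^pF_q(x-Q(v))\,dv$ is continuous because $q\ll\lambda$, so $V:=pF_X(X)\sim\mathrm{Unif}_{[0,p]}$. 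In the same way, $\Gamma_r^{(\nu,q)}(p)=p\,\EE\bigl[Q_\nu\bigl(V+(1-p)F_q(Q_q(V/p)-r)\bigr)\bigr]$, since $Q_q(V/p)\sim q$ and $pF_q(Q_q(V/p))=V$ a.s. Both quantities are thus $p$ times the expectation of $Q_\nu$ evaluated at $V$ plus a correction. Because $Q_\nu$ is non-decreasing, it is enough to compare the corrections pointwise, i.e. to prove
\[
R(X)\ \ge\ (1-p)\,F_q\bigl(Q_q(F_X(X))-r\bigr)\qquad\text{a.s.}
\]

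The comparison should come from two elementary bounds that use only $m\le Q\le m+r$. First, $Q(v)\le m+r$ gives $R(x)\ge(1-p)F_q(x-m-r)$. Second, $Q(v)\ge m$ gives $F_X(x)\le F_q(x-m)$, hence $Q_q(F_X(x))\le x-m$ up to the usual quantile conventions. Combining these with the monotonicity of $F_q$ yields $(1-p)F_q(Q_q(F_X(X))-r)\le(1-p)F_q(X-m-r)\le R(X)$, which is the required comparison. Taking expectations then gives $\Mcal(Q)(p)\ge\Gamma_r^{(\nu,q)}(p)$, and hence $U_\mu\ge\Gamma_r^{(\nu,q)}$ on $(0,1)$. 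This contradicts the hypothesis, so $\diam(\supp(\alpha))\ge r$.

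I expect the main obstacle to be the quantile step $F_q(Q_q(F_X(x))-r)\le F_q(x-m-r)$. It can fail on a null set, or where $F_q$ has flat pieces, i.e. outside the interior of $\supp(q)$. I would handle this by noting that $F_q(Q_q(t))=t$ and working with $y:=Q_q(F_X(x))$ directly. If $y>x-m$, then $F_q$ is constant on $[x-m,y]$, and monotonicity of $F_q$ on translates should still give the inequality. Two further points need checking: integrability, which follows from $|Q_\nu|\in L^1$ and the uniform law of $V$ as in Proposition~\ref{prop:Bass_operator_properties}; and the endpoint conventions when $\supp(\alpha)$ is unbounded, a case in which the conclusion holds trivially.
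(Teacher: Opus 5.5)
Your argument is correct, and it follows a genuinely different and more direct route than the paper.

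\textbf{The paper's route.} The paper assumes $\diam(\supp(\alpha))<r$ and discretizes $Q_\alpha$ into piecewise-constant $Q_k$ whose gap vectors satisfy $\|h^{(k)}\|_1<r$. It replaces $\nu,q$ by approximations satisfying Assumption~\ref{ass:technical-assumptions}. It then invokes the comparison property of the atomic $q_k$-Bass map from \cite[Proposition~2.9]{AcMa26_semidiscrete} to obtain $\Mcal_k(Q_k)(p)\ge\Mcal_k(r\mathbf{1}_{(p,1)})(p)$, and passes to the limit with Proposition~\ref{prop:q-Bass-operator-stability}.

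\textbf{Your route.} You prove the operator inequality
\[
\Mcal(Q)(p)\ \ge\ \Mcal(r\mathbf{1}_{(p,1)})(p)=\Gamma_r^{(\nu,q)}(p)
\]
in one step, for every monotone $Q$ with values in $[m,m+r]$. You write both sides as $p\,\EE[Q_\nu(V+\cdot)]$ with the same $V\sim\mathrm{Unif}_{[0,p]}$, obtained from the probability integral transform, and compare the remainders pointwise.

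\textbf{What each buys.} Your argument needs nothing beyond $q\ll\lambda$. It uses no regularization of $\nu$ or $q$, no semidiscrete theory, and no limit passage. Since it covers $\diam(\supp(\alpha))\le r$, its contrapositive even yields the strict conclusion $\diam(\supp(\alpha))>r$. The paper's route mainly buys reuse of the finite-dimensional machinery already developed in the companion work.

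\textbf{Two small corrections.}

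First, the step you flag as the main obstacle is not one. For $t\in(0,1)$ and $y\in\RR$ one has $Q_q(t)\le y$ if and only if $t\le F_q(y)$. Hence $F_X(x)\le F_q(x-m)$ gives $Q_q(F_X(x))\le x-m$ for every $x$ with $F_X(x)\in(0,1)$. Monotonicity of $F_q$ then yields $F_q(Q_q(F_X(x))-r)\le F_q(x-m-r)$, with no exceptional set. The case $y>x-m$ never occurs. That is fortunate, because your proposed patch would not work: flatness of $F_q$ on $[x-m,y]$ says nothing about $F_q$ on the translate $[x-m-r,y-r]$.

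Second, integrability does not follow from the uniform law of $V$ alone, since that does not control the law of $V+R(X)$. Instead, use the bound in the proof of Proposition~\ref{prop:Bass_operator_properties}, applied to $Q$ and to $r\mathbf{1}_{(p,1)}$. Equivalently, the density of $\int_0^1F_q(X-Q(v))\,dv$ is at most $1/p$, so $\EE|Q_\nu(\cdot)|\le\frac1p\int_0^1|Q_\nu(u)|\,du$ for both expectations.
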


\begin{figure}
     \centering
          \begin{subfigure}{0.6\textwidth}
         \centering
         \includegraphics[width=\textwidth]{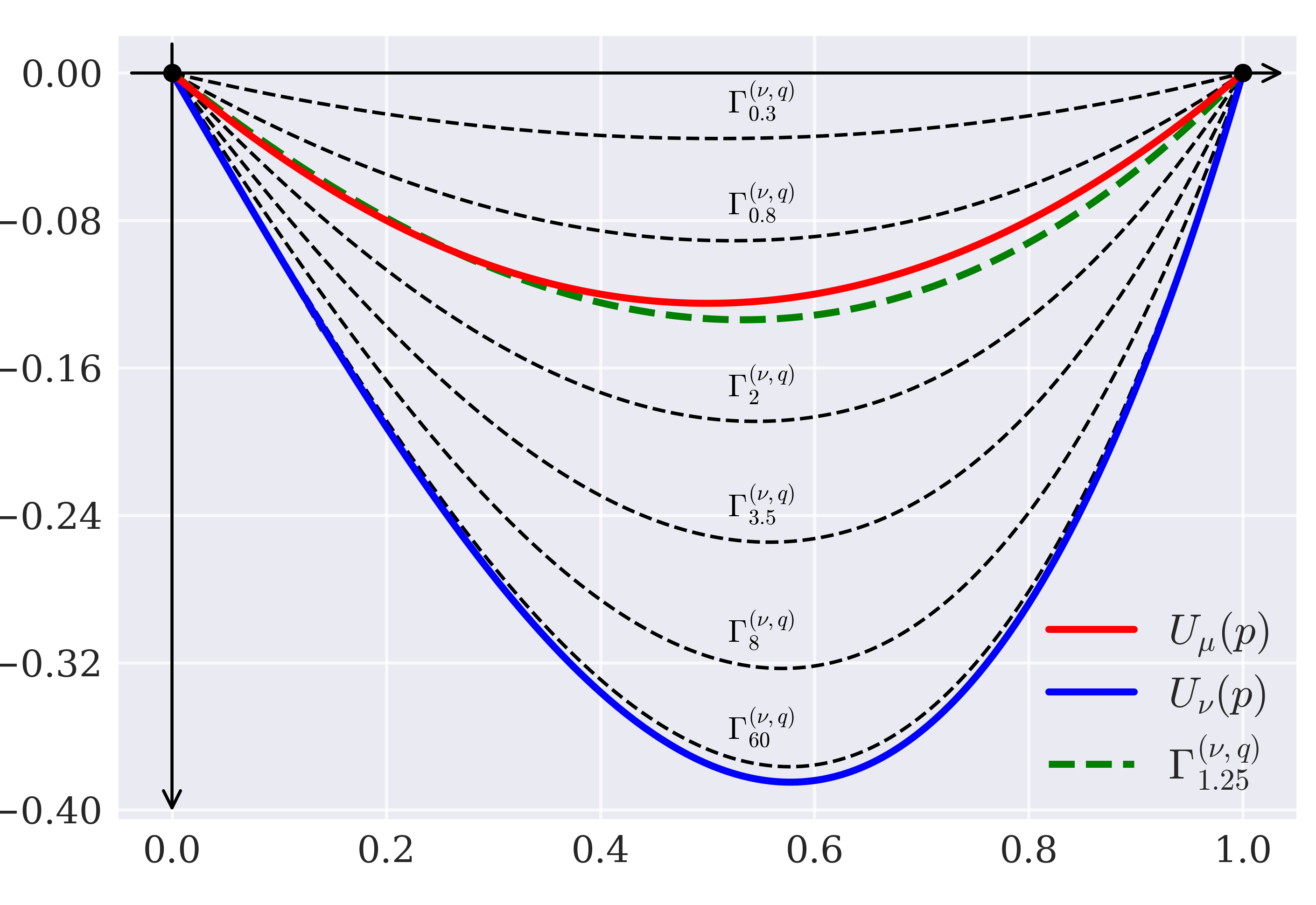}
     \end{subfigure}
	\caption{In this example, $Q_\mu(u)=u-\frac12$, $Q_\nu(u)=3u^2-1$, and $q$ is the standard Cauchy distribution. Since $U_\mu(0.2)<\Gamma_{1.25}^{(\nu,q)}(0.2)$, any fixed-point distribution of the $q$-Bass martingale from $\mu$ to $\nu$ must have support of diameter at least $1.25$, by Proposition~\ref{prop:maximal-curves-bound}.}
     	\label{fig:maximal-curves-example}
\end{figure}

\begin{proof}
We argue by contradiction. Fix $r>0$ and assume that $\diam(\supp(\alpha))<r$. Then $\alpha$ is compactly supported. In particular, we may assume that
\[
Q_\alpha \in L^\infty([0,1]) \cap \Mon((0,1), \RR)
\]
and that $J=\mathrm{ess}\sup_{u,v \in (0,1)} \bigl(Q_\alpha(u)-Q_\alpha(v)\bigr)=Q_\alpha(1)-Q_\alpha(0)\in [0,r)$.

For any $k \in \NN$, let
\[
I_k = \{i/k: i=0, \dots, k\} \cup \{p\}
\]
and set $m_k = |I_k|-1$. We denote by $0=u_0<u_1^{(k)}< \dots < u_{m_k}^{(k)}=1$ the elements of $I_k$, listed in increasing order. In particular, there exists $m_k^* \in \{1, \dots, m_k-1\}$ such that $u_{m_k^*}^{(k)} = p$.
Moreover, let $Q_k \in L^\infty([0,1]) \cap \Mon((0,1), \RR)$ be the piecewise constant function defined by
\[
Q_k(u)= Q_\alpha(u_i^{(k)}),\quad \text{if } u \in (u_i^{(k)}, u_{i+1}^{(k)}], \qquad \text{and} \qquad Q_k(0)=Q_k(u_{0}^{(k)}).
\]
Then $Q_k \to Q_\alpha$ pointwise. We define the vector $h^{(k)} \in \RR^{m_k-1}$ by
\[
h_i^{(k)}=\Delta Q_k(u_i^{(k)}), \qquad i =1, \dots, m_k-1.
\]
Since $Q_k$ is nondecreasing, we have $h^{(k)} \in \RR_{\geq 0}^{m_k-1}$ and $\|h^{(k)}\|_1 \leq J<r$.

Let $(\nu_k)_{k \in \NN}$ and $(q_k)_{k \in \NN}$ be the sequences given by Corollary~\ref{cor:approx_nu_2} and \cite[Proposition A.2]{AcMa26_semidiscrete}, respectively. We denote by
$\Mcal_k : \Mon((0,1), \RR) \rightarrow C([0,1])$
the $q_k$-Bass operator with terminal marginal $\nu_k$, and by
$f^{(k)}:\RR^{m_k-1}_{\geq 0} \rightarrow \RR^{m_k-1}$
the $m_k$-atomic $q_k$-Bass map with respect to the distribution $\nu_k$ and the weights
\[
p_i^{(k)} = u^{(k)}_{i+1}- u^{(k)}_{i}, \qquad i=0, \dots, m_k-1.
\]
Since $(\nu_k, q_k)$ satisfies Assumption~\ref{ass:technical-assumptions}, we can apply \cite[Proposition 2.9]{AcMa26_semidiscrete} to $f^{(k)}$ and obtain
\[
\Mcal_k(Q_k)(p)=f^{(k)}_{m_k^*}(h^{(k)}) \geq f^{(k)}_{m_k^*}(0, \dots, r, \dots, 0) = \Mcal_k(r\mathbf{1}_{(p, 1)}),
\]
where the first and last equalities follow from Remark~\ref{rmk:q-bass-operator-gen}, and $r$ appears in the $m_k^*$-th position of the vector.
Proposition~\ref{prop:q-Bass-operator-stability} then yields 
\[
\Mcal(Q_\alpha)(p) = \lim_{k \rightarrow \infty}\Mcal_k(Q_k)(p) \geq \lim_{k \rightarrow \infty} \Mcal_k(r \mathbf{1}_{(p, 1)}) = \Mcal(r \mathbf{1}_{(p, 1)}) = \Gamma_r^{(\nu, q)}(p),
\]
where $\Mcal$ is the $q$-Bass operator with terminal distribution $\nu$.
Since $\alpha$ is a fixed-point distribution, we have $U_\mu(p) = \Mcal(Q_\alpha)$. Therefore, $U_\mu(p) \geq \Gamma_r^{(\nu, q)}(p)$.
\end{proof}

We conclude this section by describing how fixed-point distributions, maximal curves, and the associated $q$-Bass coupling transform under positive affine transformations of the reference measure.

\begin{proposition}[$q$-invariance under linear transformations] \label{prop:scaling-properties}
	Let $r,a>0, b\in \RR$, $\mu,\nu\in\Prob_1(\RR)$, and $q\in\Prob(\RR)$ be such that $q\ll\lambda$ and $\nu$ is not a Dirac measure. Let us define the linear map $\ell:\RR \rightarrow \RR$ as $\ell(x)=ax+b$, for any $x\in \RR$. Then the following statements hold:
	\begin{enumerate}[label=(\roman*)]
		\item \label{it:scaling-properties-1} If $\alpha \in \Prob(\RR)$ is a fixed-point distribution for a $q$-Bass martingale $\pi \in \Mart(\mu, \nu)$, then $\pi$ is also an $\ell_\# q$-Bass martingale and $\ell_\# \alpha$ a fixed-point distribution for it.
		\item \label{it:scaling-properties-2} If $\Gamma_r^{(\nu, q)}$ is the $r$-maximal curve of the $q$-Bass operator with terminal marginal $\nu$, then 
		\[
			\Gamma_r^{(\nu, q)}(p) = \Gamma_{ar}^{\left(\nu, \ell_\#q\right)}(p), \qquad \text{for all } p \in [0,1].
		\]
	\end{enumerate}
	
\end{proposition}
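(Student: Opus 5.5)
The plan is to verify both statements by direct change of variables, using Definition~\ref{def:q-Bass} for (i) and Definition~\ref{def:r-maximal-curve} for (ii). Throughout, set $\tilde q:=\ell_\#q$. Since $a>0$, we have $\tilde q\ll\lambda$, with
\[
F_{\tilde q}(x)=F_q\bigl((x-b)/a\bigr),\qquad \rho_{\tilde q}(x)=\tfrac1a\rho_q\bigl((x-b)/a\bigr).
\]

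For (i), let $X_0\sim\alpha$ and $Q\sim q$ be independent, as in Definition~\ref{def:q-Bass}, with $T=Q_\nu\circ F_{\alpha\ast q}$. Define $\tilde X_0:=aX_0$ and $\tilde Q:=aQ+b$. These are independent, with $\tilde Q\sim\tilde q$ and $\tilde X_0\sim(x\mapsto ax)_\#\alpha$. Then $\tilde X_0+\tilde Q=a(X_0+Q)+b$. Since $a>0$, this gives $F_{\tilde\alpha\ast\tilde q}(y)=F_{\alpha\ast q}((y-b)/a)$, hence
\[
\tilde T(y)=T\bigl((y-b)/a\bigr)\quad\text{and}\quad \tilde T(\tilde X_0+\tilde Q)=T(X_0+Q)=M_1.
\]
Similarly, $(\tilde q\star\tilde T)(ax)=\int T\bigl((ax+az+b-b)/a\bigr)q(dz)=(q\star T)(x)$. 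Therefore $(\tilde q\star\tilde T)(\tilde X_0)=M_0$, and monotonicity $\tilde\alpha$-a.s.\ follows from monotonicity of $q\star T$ $\alpha$-a.s., composed with the increasing map $x\mapsto x/a$. So $\pi$ is a $\tilde q$-Bass martingale with Bass distribution $(x\mapsto ax)_\#\alpha$.

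The statement asks instead for $\ell_\#\alpha$, which is this measure translated by $b$. Translating $\alpha$ by a constant $c$ translates $\alpha\ast\tilde q$, and hence shifts $\tilde T$, by the same $c$. So $\tilde q\star\tilde T$ evaluated at the translated point is unchanged; this is the shift invariance already noted for $\Acal_q$ and in Proposition~\ref{prop:Bass_operator_properties}(iii). Concretely, replacing $\tilde X_0$ by $aX_0+b$ and $\tilde Q$ by $aQ$ leaves the pair $(M_0,M_1)$ identical, but $aQ$ has law $(x\mapsto ax)_\#q$, not $\tilde q$. To keep the reference measure equal to $\tilde q=\ell_\#q$, I will instead use $\tilde X_0=aX_0+b'$ with $\tilde Q=aQ+b$, and check that any $b'$ works. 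Indeed, $\tilde T$ is then defined through $\alpha'\ast\tilde q$, the law of $a(X_0+Q)+b+b'$. The identities go through exactly as above, with total shift $b+b'$ in $\tilde T$, which is cancelled when evaluating at $\tilde X_0+\tilde Q$ and in $\tilde q\star\tilde T$ at $\tilde X_0$. Taking $b'=b$ yields $\ell_\#\alpha$.

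For (ii), I substitute $z=ay+b$ in
\[
\Gamma_{ar}^{(\nu,\tilde q)}(p)=\int Q_\nu\bigl(pF_{\tilde q}(z)+(1-p)F_{\tilde q}(z-ar)\bigr)p\rho_{\tilde q}(z)\,dz.
\]
This uses $F_{\tilde q}(ay+b)=F_q(y)$, $F_{\tilde q}(ay+b-ar)=F_q(y-r)$, and $\rho_{\tilde q}(ay+b)\,a\,dy=\rho_q(y)\,dy$, and gives $\Gamma_r^{(\nu,q)}(p)$ directly.

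The only delicate point is bookkeeping the translation parameter in (i). There is also a minor technical caveat: $q\star T$ is only $\alpha$-a.e.\ defined (by the cited Proposition~1.3 of \cite{AcMa26_semidiscrete}), so the identity $(\tilde q\star\tilde T)(ax+b)=(q\star T)(x)$ should be stated almost everywhere. This is preserved because $\ell$ is a bijection that pushes $\alpha$-null sets to $\ell_\#\alpha$-null sets.
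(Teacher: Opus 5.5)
Your proof is correct. For (ii) it is the paper's argument: the substitution $z=ay+b$ in the definition of the $r$-maximal curve.

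For (i) you take a different route. The paper's steps are:
\begin{enumerate}[label=(\arabic*)]
\item It checks that $Q_{\ell_\#\alpha}$ solves the quantile fixed-point equation \eqref{eq:fixed-point_equiv} with reference $\ell_\#q$. This uses $Q_{\ell_\#\alpha}(p)-Q_{\ell_\#\alpha}(v)=a\bigl(Q_\alpha(p)-Q_\alpha(v)\bigr)$ and a change of variables in $z$.
\item It invokes \cite[Theorem~1.4]{AcMa26_semidiscrete} to obtain an $\ell_\#q$-Bass martingale from $\mu$ to $\nu$.
\item It argues separately, and informally, that this martingale has the same transition kernels as $\pi$. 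The justification given is $T^{\ell_\#q}=T^q(\cdot/a)$, with the translation dismissed by translation invariance.
\end{enumerate}

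You work directly with Definition~\ref{def:q-Bass}. You realize the new Bass variable and noise as $aX_0+b$ and $aQ+b$ on the same probability space. This gives the exact relations $\tilde T(y)=T\bigl((y-2b)/a\bigr)$ and $(\tilde q\star\tilde T)(y)=(q\star T)\bigl((y-b)/a\bigr)$, which reproduce literally the same pair $(M_0,M_1)$. In one step you therefore get three things:
\begin{itemize}
\item the fixed-point property;
\item the required $\ell_\#\alpha$-a.s.\ monotonicity of $\tilde q\star\tilde T$, which the paper's reduction to \eqref{eq:fixed-point_equiv} leaves implicit even though the cited equivalence needs it;
\item the identification of the resulting coupling with $\pi$, with no separate kernel comparison.
\end{itemize}
Your computation also makes explicit the shift by $2b$ that the paper's formula $T^{\ell_\#q}=T^q(\cdot/a)$ suppresses. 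The paper's route has the advantage of staying inside the quantile formulation used in the rest of the paper.

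Your write-up would be cleaner if you took $b'=b$ from the start, instead of passing through $b'=0$ and the discarded attempt with $aQ$.
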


\begin{proof}
	Let $\alpha \in \Prob(\RR)$ be a fixed-point solution of the $q$-Bass martingale from $\mu$ to $\nu$.  We first prove that $\ell_\#\alpha$ is a fixed point of the $\ell_\#q$-Bass martingale from $\mu$ to $\nu$. By \cite[Theorem 1.4]{AcMa26_semidiscrete}, this is equivalent to showing  that, for $p \in (0,1)$,
\[
	Q_\mu(p) = \int_\RR Q_\nu\!\left(\int_0^1 F_{\ell_\#q}(z+Q_{\ell_\#\alpha}(p)-Q_{\ell_\#\alpha}(v))\,dv\right)\rho_{\ell_\#q}(z)\,dz.
\]
We have
\begin{align*}
	\int_\RR Q_\nu & \!\left(\int_0^1 F_{\ell_\#q}(z+Q_{\ell_\#\alpha}(p)-Q_{\ell_\#\alpha}(v))\,dv\right)\rho_{\ell_\#q}(z)\,dz \\
	& = \int_\RR Q_\nu \!\left(\int_0^1 F_{\ell_\#q}\left(a \left(\frac{z}{a}+Q_\alpha(p)-Q_\alpha(v)\right)\right)\,dv\right)\rho_{\ell_\#q}(z)\,dz\\
	& = \int_\RR Q_\nu \!\left(\int_0^1 F_{q}\left(\frac{z-b}{a}+Q_\alpha(p)-Q_\alpha(v)\right)\,dv\right)\rho_{q}\left(\frac{z-b}{a}\right)\frac{1}{a}\,dz\\
	& = \int_\RR Q_\nu \!\left(\int_0^1 F_{q}\left(z+Q_\alpha(p)-Q_\alpha(v)\right)\,dv\right)\rho_{q}\left(z\right)\,dz = Q_\mu(p),
\end{align*}
where the last equality follows from \eqref{eq:fixed-point_equiv}.  Now recall that, by \cite[Proposition~2.1]{Ts24}, the optimization problems associated with the reference measures $q$ and $\ell_\#q$ each admits a unique optimizer, although the corresponding fixed points need not be unique. Thus, the $q$-Bass and $\ell_\#q$-Bass martingales are a priori distinct. Nevertheless, their transition kernels coincide and, since they have the same marginals, they define the same martingale coupling. Indeed, translation invariance implies that the coefficient $b$ has no effect on the transition kernel, while scaling both the fixed point $\alpha$ and the reference measure $q$ by a factor $a$ amounts to rescaling the argument of the transport map $T^{q}$ associated with the $q$-Bass martingale by the factor $1/a$. Consequently, the transport map $T^{\ell_\#q}$ associated with the $\ell_\#q$-Bass martingale satisfies
$T^{\ell_\#q}=T^{q}(\frac{\cdot}{a})$.

The proof of~\ref{it:scaling-properties-2} follows by applying the same argument to the definition of $r$-maximal curves.
\end{proof}

\section{Convergence of the Fixed-Point Iteration}
\label{sec:convergence}

The fixed-point characterization \eqref{eq:fixed_point_eq} of $q$-Bass martingales naturally suggests an iterative procedure for computing a Bass distribution: starting from a cumulative distribution function $F$, one repeatedly applies the operator $\Acal_q$ defined in \eqref{def:fixed_point_operator}. In this section, we study the convergence of this scheme in the compactly supported setting, which is the case most directly relevant for numerical applications.

We begin under additional regularity assumptions and extend to an arbitrary reference measure $q$ the differential approach developed in \cite{AcMaPa23} for the Gaussian case. Passing from distribution functions to quantile functions, we introduce an operator $\Gcal_q$ corresponding to $\Acal_q$ and study its Fr\'echet derivative. The resulting estimates show that, upon identifying cumulative distribution functions with their associated probability measures, $\Acal_q$ is non-expansive with respect to $\Wcal_\infty$. They further yield linear convergence both to the set of fixed points and to a fixed-point distribution. An approximation argument then allows us to retain the non-expansiveness property under substantially weaker assumptions.

We next establish convergence without imposing the regularity conditions required for a quantitative rate. Since non-expansiveness alone does not guarantee convergence of the iterates, we complement it with a strict descent property for the dual objective associated with the martingale transport problem. We show that this objective decreases strictly unless the current iterate is already a fixed point. Combining strict descent with compactness and non-expansiveness yields convergence of the fixed-point iteration. Furthermore, under the stronger assumptions considered in the first part of the section, this convergence is linear.

\subsection{Linear convergence under regularity assumptions}

We first establish linear convergence, with respect to the $\infty$-Wasserstein distance, of the fixed-point sequence
\[
  \bigl(\Acal_q^k F\bigr)_{k \in \NN} \subseteq \CDF,
\]
for any $F \in \CDF$, under Assumption~\ref{ass:technical-assumptions-2} and the assumptions below.

\begin{assumption}[Assumptions for linear $\Wcal_\infty$-convergence]\label{ass:linear-convergence}
Let $\mu, \nu, q \in \Prob(\RR)$. Assume that:
\begin{enumerate}[label=(A\arabic*)]
  \setcounter{enumi}{6}
  \item \label{ass:A7} the measure $\nu$ is absolutely continuous, and its density is bounded away from zero on $\co(\supp(\nu))$;
  \item \label{ass:A8} the measure $q$ is equivalent to the Lebesgue measure with square integrable density.
\end{enumerate}
\end{assumption}

To analyze $\Acal_q$ in the $\Wcal_\infty$ distance, it is convenient to reformulate the iteration on the quantile side. The auxiliary functions introduced below encode the transport step defining $\Acal_q$ and its behavior under perturbations of the input quantile function.

\begin{definition}
Let $\mu, \nu \in \Prob_\infty(\RR)$ and $q\in\Prob(\RR)$ be such that $q\ll\lambda$ and $\nu$ is not a Dirac measure. For $Q \in L^0(0,1)$, define
\begin{align}
  & S_Q^q(x) := \int_\RR Q_\nu\left( \int_0^1 F_q(x - Q(y) + z)\,dy \right)\rho_q(z)\,dz, & x \in \RR, \label{aux_operator_def1}\\
  & T_Q^q(x,y) := \int_\RR Q_\nu'\left(\int_0^1 F_q(x - Q(w) + z)\,dw \right)\rho_q(x-y+z)\rho_q(z)\,dz,& x,y \in \RR \nonumber.
\end{align}
Moreover, for $Q,f \in L^\infty(0,1)$, define
\begin{equation*}
  S_{Q,f}^q(\epsilon,x) := S_{Q+\epsilon f}^q(x),\quad \epsilon,x \in \RR.
\end{equation*}
\end{definition}
The main properties of these functions were proved for $q$ Gaussian in \cite{AcMaPa23}. Their generalization to any $q$ satisfying Assumption~\ref{ass:A8} is immediate.\footnote{In contrast to the Gaussian case, in the present setting one only has $S_Q^q \in C^1(\RR)$, which is sufficient for convergence. The Lipschitz estimate for $(S_Q^q)'$ requires additional regularity of $\rho_q$ and is not used here.}
Intuitively, the operator $Q \mapsto S_Q^q$ maps a random variable $Q \in L^0(0,1)$ with law $\alpha$ to $q \star T_{\alpha \ast q,\nu}$, where $T_{\alpha \ast q,\nu}$ denotes the unique monotone map from $\alpha \ast q$ to $\nu$. Moreover, the operator $(Q,f) \mapsto S_{Q,f}^q$ is introduced to study the sensitivity of $S_Q^q$ under perturbations of $Q$ of the form $Q+\epsilon f$, where $f \in L^\infty(0,1)$ and $\epsilon > 0$. 

To express the fixed-point iteration on the quantile side, we introduce the following counterpart of $\Acal_q$.

\begin{definition}
\label{def:G_q}
Let $\mu, \nu \in \Prob_\infty(\RR)$ and $q\in\Prob(\RR)$ be such that $q\ll\lambda$, $\nu$ is not a Dirac measure, and $\supp(\mu) \subseteq \interior(\co(\supp(\nu)))$. We define the operator
$\Gcal_q \colon L^\infty(0,1) \to L^\infty(0,1)$ by
\[
  \Gcal_q Q := (S_Q^q)^{-1} \circ Q_\mu,
\]
where $S_Q^q$ is given in \eqref{aux_operator_def1} and $(S_Q^q)^{-1}$ denotes its left-continuous generalized inverse.
\end{definition}

The following proposition collects the regularity and contraction properties of $\Gcal_q$ needed for the convergence analysis.

\begin{proposition}
\label{prop:properties-G_q}
Let $\mu, \nu \in \Prob_\infty(\RR)$ and $q\in\Prob(\RR)$ be such that $q\ll\lambda$, $\nu$ is not a Dirac measure, and $\supp(\mu) \subseteq \interior(\co(\supp(\nu)))$. Under Assumptions~\ref{ass:technical-assumptions-2} and
\ref{ass:linear-convergence}, the operator $\Gcal_q$ has the following
properties:
\begin{enumerate}[label=(\roman*)]
  \item \label{it:mainProposition.1}
  For every $Q \in L^\infty(0,1)$, the map $u \mapsto \Gcal_q Q(u)$ is left-continuous.

  \item \label{it:mainProposition.2}
  For every $F \in \CDF$ with quantile function $Q \in L^\infty(0,1)$, we have
  \[
    (\Acal_q F)^{-1} = \Gcal_q Q.
  \]

  \item \label{it:mainProposition.3}
  The operator $\Gcal_q$ is Fréchet differentiable, with derivative $D_Q \Gcal_q$.

  \item \label{it:mainProposition.4}
  For every $Q \in L^\infty(0,1)$, the operator norm of the Fréchet derivative $D_Q\Gcal_q$ is bounded by $1$, and $\|D_Q \Gcal_q(f)\|_\infty = \|f\|_\infty \iff f \in \mathrm{Span}(\mathbf 1)$.
\end{enumerate}
\end{proposition}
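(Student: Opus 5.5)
The plan is to follow the Gaussian-case analysis of \cite{AcMaPa23}, replacing $\phi$ by $\rho_q$ and using Assumption~\ref{ass:linear-convergence} wherever the Gaussian structure was used. The first step is to establish that $S_Q^q$ is $C^1$ and strictly increasing for every $Q\in L^\infty(0,1)$. Differentiating under the integral in \eqref{aux_operator_def1} gives
\[
(S_Q^q)'(x)=\int_0^1 T_Q^q(x,Q(y))\,dy .
\]
Both differentiations are legitimate: by \ref{ass:A7}, $Q_\nu'$ is bounded (it is at most the reciprocal of the density lower bound), and by \ref{ass:A8}, $\rho_q\in L^2$, so $\int\rho_q(x-y+z)\rho_q(z)\,dz$ is a bounded continuous function of $x-y$. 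Since $\rho_q>0$ a.e. and $Q_\nu'>0$, the derivative is strictly positive. Moreover $S_Q^q(\RR)=(\ell_\nu,r_\nu)$, because the inner argument sweeps $(0,1)$ as $x$ ranges over $\RR$. The assumption $\supp(\mu)\subseteq\interior(\co(\supp(\nu)))$ then guarantees that $(S_Q^q)^{-1}\circ Q_\mu$ is finite and bounded, so $\Gcal_q$ indeed maps into $L^\infty$.

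Item (i) then follows because $(S_Q^q)^{-1}$ is continuous and increasing and $Q_\mu$ is left-continuous. For item (ii), I would compute directly that $\Acal_qF=F_\mu\circ S_Q^q$ when $Q=Q_F$, using $q\ast F(x)=\int_0^1F_q(x-Q(y))\,dy$. The generalized inverse of $F_\mu\circ S$ with $S$ a continuous increasing bijection onto an interval containing $\supp(\mu)$ is $S^{-1}\circ Q_\mu$, which gives $(\Acal_qF)^{-1}=\Gcal_qQ$.

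For item (iii), I would use the implicit relation $S^q_{\Gcal_qQ\,\text{-side}}$: writing $G=\Gcal_qQ$, we have $S_Q^q(G(u))=Q_\mu(u)$. Differentiating $S_{Q,f}^q(\epsilon,x)$ in $\epsilon$ gives
\[
\partial_\epsilon S_{Q,f}^q(0,x)=-\int_0^1 T_Q^q(x,Q(y))f(y)\,dy .
\]
The implicit function theorem, applied pointwise and made uniform in $u$ by the bounds of the first step, then gives the candidate
\[
D_Q\Gcal_q(f)(u)=\frac{\int_0^1 T_Q^q(G(u),Q(y))f(y)\,dy}{\int_0^1T_Q^q(G(u),Q(y))\,dy}.
\]
This candidate must be shown to be the Fréchet derivative, which amounts to a remainder that is $o(\|f\|_\infty)$ uniformly in $u$. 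I expect this to be the main obstacle. The Gaussian proof used a Lipschitz bound on $(S_Q^q)'$, which is not available here (see the footnote). The plan is to instead use uniform continuity: $Q\mapsto T_Q^q$ is continuous in sup norm on bounded sets, by $L^2$-continuity of translations of $\rho_q$ and dominated convergence (using that $Q_\nu'$ is bounded, continuity of $Q_\nu'$ should be available). This gives a mean-value remainder $o(\|f\|_\infty)$, together with a positive lower bound on the denominator uniform over $u$. The lower bound holds because $G(u)$ and $Q$ range over a compact set and $T_Q^q>0$ continuous.

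Item (iv) is then immediate from the formula. $D_Q\Gcal_q(f)(u)$ is an average of $f$ against the probability density $T_Q^q(G(u),Q(\cdot))/\int T_Q^q$, so $\|D_Q\Gcal_q f\|_\infty\le\|f\|_\infty$ and $D_Q\Gcal_q\mathbf 1=\mathbf 1$. For the equality case, suppose $\|D_Q\Gcal_q f\|_\infty=\|f\|_\infty$ with $f$ nonconstant. Since the weights are strictly positive a.e. (by \ref{ass:A8}, which makes $q$ equivalent to $\lambda$), each average is strictly less than $\operatorname{ess\,sup}|f|$. To turn this into a strict inequality after taking the sup over $u$, I would use compactness of the range of $G$, which holds because $G$ is bounded, together with continuity of the weights in $x=G(u)$. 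This rules out attainment in the limit and forces $f\in\mathrm{Span}(\mathbf 1)$.
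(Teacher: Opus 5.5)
Your proposal has the same structure as the paper's proof. The paper also transfers \cite[Proposition~3.9]{AcMaPa23} with $\phi$ replaced by $\rho_q$. It derives from the implicit-function theorem $D_Q\Gcal_q(f)(u)=\int_0^1f(v)K_Q^q(u,v)\,dv$ with $K_Q^q(u,v)=T_Q^q(\Gcal_qQ(u),Q(v))/(S_Q^q)'(\Gcal_qQ(u))$, which is exactly your candidate. It upgrades this to a Fr\'echet derivative through operator-norm continuity of $Q\mapsto D_Q\Gcal_q$, not a Lipschitz bound on $(S_Q^q)'$. It gets (iv) from $K_Q^q(u,\cdot)$ being a strictly positive probability density. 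For the equality case you use continuity of the weights in $x$ over the compact set $\overline{\Gcal_qQ((0,1))}$. The paper instead uses a uniform lower bound $K_Q^q\geq\varepsilon_Q>0$, which gives $\|D_Q\Gcal_q(f)\|_\infty\leq\|f\|_\infty-\varepsilon_Q\bigl(\|f\|_\infty-\bigl|\int_0^1f(v)\,dv\bigr|\bigr)$. Both rest on the same continuity, positivity and compactness facts, so they are interchangeable.

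The one step that fails as written is your justification of the continuity of $Q\mapsto T_Q^q$. Assumption~\ref{ass:A7} only makes $Q_\nu'=1/(\rho_\nu\circ Q_\nu)$ bounded. It need not be continuous, or even a.e.\ continuous (let $\rho_\nu$ jump on a fat Cantor set). So $Q_\nu'\bigl(F_{\alpha_n\ast q}(s)\bigr)\to Q_\nu'\bigl(F_{\alpha\ast q}(s)\bigr)$ can fail, and pointwise dominated convergence is unavailable.

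The claim itself survives, by the following repair:
\begin{enumerate}
\item Substituting $s=x+z$ gives $T_Q^q(x,y)=\int_\RR Q_\nu'\bigl(F_{\alpha\ast q}(s)\bigr)\rho_q(s-y)\rho_q(s-x)\,ds$ with $\alpha=Q_\#\lambda_{|(0,1)}$.
\item By Lusin, replace $Q_\nu'$ by a continuous $\psi$ off a set $E$ of small measure. Then $\psi\circ F_{\alpha_n\ast q}\to\psi\circ F_{\alpha\ast q}$ uniformly.
\item Control the exceptional region $F_{\alpha_n\ast q}^{-1}(E)$ by Cauchy--Schwarz together with:
\begin{itemize}
\item the identity $\int_{F_{\alpha_n\ast q}^{-1}(E)}\rho_{\alpha_n\ast q}\,ds=\lambda(E)$;
\item the convergence $\rho_{\alpha_n\ast q}\to\rho_{\alpha\ast q}$ in $L^1$;
\item the positivity $\rho_{\alpha\ast q}>0$ a.e.\ (this is where $q\sim\lambda$ enters), so small $\rho_{\alpha\ast q}\,ds$-measure forces small Lebesgue measure on bounded intervals;
\item the uniform integrability and tightness of $\{\rho_q(\cdot-x)^2\}$ for $x$ in a bounded set.
\end{itemize}
\end{enumerate}
This yields $\sup_{|x|\leq R,\,y\in\RR}|T^q_{Q_n}(x,y)-T^q_Q(x,y)|\to0$. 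Since $\Gcal_qQ$ is bounded, that is all the implicit-function step and the operator-norm continuity need. The paper does not carry this out either; it imports the continuity properties of $T_Q^q$ from \cite{AcMaPa23}.
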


\begin{proof}
The proof follows the argument of \cite[Proposition~3.9]{AcMaPa23}, with the Gaussian distribution function and density replaced by $F_q$ and $\rho_q$, respectively, and with the corresponding versions of \cite[Lemmas~3.4, 3.6, and 3.7]{AcMaPa23}. In particular, $S_Q^q$ is continuously differentiable and increasing, with range $\operatorname{range}(S_Q^q)=\interior\bigl(\co(\supp(\nu))\bigr)$; see \cite[Lemma~3.4(i)--(ii)]{AcMaPa23}. The argument only uses the continuity and strict positivity of $(S_Q^q)'$.

The implicit-function theorem applied to the identity $S_{Q+\epsilon f}^q\bigl(\Gcal_q(Q+\epsilon f)(u)\bigr)=Q_\mu(u)$ yields
\[
D_Q\Gcal_q(f)(u)=\int_0^1f(v)K_Q^q(u,v)\,dv,
\]
where
\[
K_Q^q(u,v):=\frac{T_Q^q\bigl(\Gcal_qQ(u),Q(v)\bigr)}{(S_Q^q)'\bigl(\Gcal_qQ(u)\bigr)};
\]
see \cite[Proof of Proposition~3.9(iii), equations~(3.16)--(3.20)]{AcMaPa23}. The corresponding continuity properties of $T_Q^q$, $(S_Q^q)'$, and $\Gcal_qQ$, together with the uniform positive lower bound for $(S_Q^q)'$ on bounded subsets of $L^\infty(0,1)$, show that $Q\mapsto D_Q\Gcal_q$ is continuous in operator norm; see \cite[Proof of Proposition~3.9(iii), equations~(3.21)--(3.23)]{AcMaPa23}. Hence the operator above is indeed the Fréchet derivative of $\Gcal_q$.

For almost every $u\in(0,1)$, the map $v\mapsto K_Q^q(u,v)$ is strictly positive and satisfies $\int_0^1K_Q^q(u,v)\,dv=1$; see \cite[Proof of Proposition~3.9(iii), equations~(3.19)--(3.20)]{AcMaPa23}. Therefore,
\[
\bigl|D_Q\Gcal_q(f)(u)\bigr|\leq\int_0^1|f(v)|K_Q^q(u,v)\,dv\leq\|f\|_\infty,
\]
and thus $\|D_Q\Gcal_q\|\leq1$. To characterize the equality case, recall that, for each fixed $Q\in L^\infty(0,1)$, there exists $\varepsilon_Q>0$ such that $K_Q^q(u,v)\geq\varepsilon_Q$ for almost every $(u,v)\in(0,1)^2$; see \cite[Proof of Proposition~3.9(iv)]{AcMaPa23}. Let $M:=\|f\|_\infty$. If $f\notin\mathrm{Span}(\mathbf 1)$, then $\left|\int_0^1f(v)\,dv\right|<M$, and consequently, for almost every $u\in(0,1)$,
\[
\bigl|D_Q\Gcal_q(f)(u)\bigr|\leq\varepsilon_Q\left|\int_0^1f(v)\,dv\right|+\int_0^1|f(v)|\bigl(K_Q^q(u,v)-\varepsilon_Q\bigr)\,dv<M.
\]
Thus, $\|D_Q\Gcal_q(f)\|_\infty<\|f\|_\infty$ whenever $f$ is not constant. Conversely, the normalization of the kernel gives $D_Q\Gcal_q(c\mathbf 1)=c\mathbf 1$ for every $c\in\RR$, which proves the characterization of equality; see \cite[Proof of Proposition~3.9(iv)]{AcMaPa23}.
\end{proof}

The derivative estimate in Proposition~\ref{prop:properties-G_q} yields both the non-expansiveness of $\Acal_q$ and the linear convergence of its iterates.

\begin{theorem} \label{thm:linear-convergence}
Under Assumptions~\ref{ass:technical-assumptions-2} and
\ref{ass:linear-convergence}, let $(\mu,\nu)$ be irreducible and let $F \in \CDF$. Then $\Acal_q$ is non-expansive with respect to the $\infty$-Wasserstein distance,\footnote{By abuse of notation, we write $\Wcal_\infty(F_\xi,F_\zeta) := \Wcal_\infty(\xi,\zeta)$ for distributions $\xi,\zeta \in \Pcal(\RR)$.} that is,
\[
  \Wcal_\infty(\Acal_q F,\Acal_q G) \le \Wcal_\infty(F,G), \qquad F,G \in \CDF_c.
\]
Moreover, equality holds if and only if $F^{-1}-G^{-1} \in \mathrm{Span}(\mathbf 1) \subseteq L^\infty(0,1)$.

Furthermore, the sequence $(\Acal_q^k F)_{k \in \NN}$ converges to a fixed point
$F^\ast$ of $\Acal_q$, and there exists $\theta \in (0,1)$ such that we have:
\begin{enumerate}[label=(\roman*)]
  \item \label{it:conv_1}
  \emph{Linear convergence to the solution space:}
  for all $k \in \NN$,
  \[
    \Wcal_\infty(\Acal_q^{k+1}F,\Lcal^\ast)
    \le
    \theta \Wcal_\infty(\Acal_q^kF,\Lcal^\ast),
  \]
  where $\Lcal^\ast$ denotes the set of fixed points of $\Acal_q$.

  \item \label{it:conv_2}
  \emph{Convergence to a fixed point:}
  for all $k \in \NN$,
  \[
    \Wcal_\infty(\Acal_q^k F,F^\ast)
    \le
    2\theta^k \Wcal_\infty(F,F^\ast).
  \]
\end{enumerate}
\end{theorem}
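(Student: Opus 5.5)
We pass to the quantile side. By Proposition~\ref{prop:properties-G_q}\ref{it:mainProposition.2}, $(\Acal_qF)^{-1}=\Gcal_q F^{-1}$ for $F\in\CDF_c$, and in one dimension $\Wcal_\infty(F,G)=\|F^{-1}-G^{-1}\|_\infty$. So everything reduces to properties of $\Gcal_q$ on $L^\infty(0,1)$.

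\emph{Non-expansiveness.} For $Q_0,Q_1\in L^\infty(0,1)$, set $Q_t:=Q_0+t(Q_1-Q_0)$. Since $\Gcal_q$ is Fréchet differentiable with continuous derivative (Proposition~\ref{prop:properties-G_q}\ref{it:mainProposition.3}), the mean value inequality gives
\[
\|\Gcal_qQ_1-\Gcal_qQ_0\|_\infty\le\sup_{t\in[0,1]}\bigl\|D_{Q_t}\Gcal_q(Q_1-Q_0)\bigr\|_\infty\le\|Q_1-Q_0\|_\infty ,
\]
using \ref{it:mainProposition.4}. For the equality case, write $\Gcal_qQ_1-\Gcal_qQ_0=\int_0^1D_{Q_t}\Gcal_q(f)\,dt$ with $f:=Q_1-Q_0$. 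If $f$ is not constant, then $\|D_{Q_t}\Gcal_q f\|_\infty<\|f\|_\infty$ for each $t$. By continuity in $t$ and compactness of $[0,1]$, the supremum over $t$ is attained and is therefore strict. Conversely, if $f=c\mathbf 1$, shift invariance gives $\Gcal_q(Q+c)=\Gcal_qQ+c$.

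\emph{Linear rate.} I would use a uniform contraction modulo constants. Work in the quotient space $L^\infty/\mathrm{Span}(\mathbf 1)$ with the oscillation seminorm $\omega(f):=\tfrac12(\operatorname{ess\,sup}f-\operatorname{ess\,inf}f)=\inf_c\|f-c\|_\infty$. The kernel representation $D_Q\Gcal_q f(u)=\int f(v)K_Q^q(u,v)\,dv$ has $\int K=1$ and $K\ge\varepsilon_Q$. A Doeblin-type estimate then gives $\omega(D_Q\Gcal_qf)\le(1-\varepsilon_Q)\,\omega(f)$.

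All iterates and fixed points can be normalized, for instance by $Q(1/2)=0$. I would show they stay in a bounded set $B\subset L^\infty$. Boundedness follows from non-expansiveness applied against a fixed point $Q^\ast$, which exists by Theorem~\ref{thm:q-bass-existence}, is compactly supported by Proposition~\ref{prop:compact-support-fixed-point}, and is unique up to translation. By the uniform lower bound for $(S^q_Q)'$ and the continuity of $T^q_Q$ on bounded sets, $\varepsilon:=\inf_{Q\in B}\varepsilon_Q>0$. Set $\theta:=1-\varepsilon$.

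The set $\Lcal^\ast$ is exactly the translates of $Q^\ast$, so $\Wcal_\infty(\cdot,\Lcal^\ast)=\omega(\,\cdot-Q^\ast)$. Integrating the derivative along the segment from $Q^\ast$ to $Q_k$ gives \ref{it:conv_1}.

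For \ref{it:conv_2}, let $c_k$ be the optimal constant at step $k$. Non-expansiveness gives $|c_{k+1}-c_k|\le\omega(Q_k-Q^\ast)\le\theta^k\omega(Q_0-Q^\ast)$, so $(c_k)$ is Cauchy. Hence $Q_k\to Q^\ast+c_\infty=:F^{\ast-1}$. Summing the geometric tail bounds the distance by $\theta^k\omega+\sum_{j\ge k}\theta^j\omega$, which after adjusting $\theta$ gives the stated factor $2\theta^k\Wcal_\infty(F,F^\ast)$.

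\emph{Main obstacle.} The hardest step is the \emph{uniform} positivity $\inf_{Q\in B}\varepsilon_Q>0$ on a bounded set. Proposition~\ref{prop:properties-G_q} only gives $\varepsilon_Q>0$ pointwise in $Q$. I would try to obtain it from the explicit form of $K^q_Q$. The density of $\nu$ is bounded below by \ref{ass:A7}, so $Q_\nu'$ is bounded above and below. The reference measure $q$ is equivalent to $\lambda$ by \ref{ass:A8}, so $\rho_q(x-y+z)\rho_q(z)$ integrates to a positive quantity that is continuous in the shift $x-y$, which ranges over a compact set.

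A further technical point is the first step, passing from arbitrary $F\in\CDF$ to $\CDF_c$. One application of $\Acal_q$ already yields a compactly supported output, because $\operatorname{range}S^q_Q=\interior\co(\supp\nu)$ and \ref{ass:A5}--\ref{ass:A6} hold. This part still needs checking.
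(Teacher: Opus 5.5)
Your reduction to $\Gcal_q$, the non-expansiveness, and the equality case coincide with the paper's argument: the integral Taylor formula, Proposition~\ref{prop:properties-G_q}\ref{it:mainProposition.4}, and uniformity in $t$ from operator-norm continuity of $Q\mapsto D_Q\Gcal_q$.

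You genuinely differ on the rate. The paper expands $\Gcal_q$ between $Q_k$ and a closest fixed point, imports the two-case estimate of \cite{AcMaPa23}, and obtains (ii) from Helly compactness and lower semicontinuity of $\|\cdot\|_\infty$. You instead use the Dobrushin contraction $\omega(D_Q\Gcal_qf)\le(1-\varepsilon)\,\omega(f)$ for $\omega(f):=\inf_{c\in\RR}\|f-c\|_\infty$. This is correct: write $K_Q^q=\varepsilon+(1-\varepsilon)R$ with $R$ a Markov kernel. Combined with the fact that $\Lcal^\ast$ is a single translation orbit, it makes (i) immediate and turns (ii) into a Cauchy argument on scalars, with no compactness needed.

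There are two points to tighten.
\begin{itemize}
\item Uniqueness up to translation should come from your own strict equality case. Two fixed points $Q_1,Q_2$ are bounded by Proposition~\ref{prop:compact-support-fixed-point} and satisfy $\|\Gcal_qQ_1-\Gcal_qQ_2\|_\infty=\|Q_1-Q_2\|_\infty$, so they differ by a constant. Do not cite Theorem~\ref{thm:uniqueness-fixed-point-sol}: its hypotheses \ref{ass:A3}--\ref{ass:A4} are not implied by \ref{ass:A7}--\ref{ass:A8}.
\item No adjustment of $\theta$ is needed. Set $d_k:=\omega(Q_k-Q^\ast)$. The essential range of $Q_{k+1}-Q^\ast$ has half-length $d_{k+1}$, and it lies in $[c_k-d_k,c_k+d_k]$ by non-expansiveness against the fixed point $Q^\ast+c_k$. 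Hence its midpoint satisfies $|c_{k+1}-c_k|\le d_k-d_{k+1}$. Telescoping gives $|c_k-c_\infty|\le d_k$, so $\|Q_k-Q^\ast-c_\infty\|_\infty\le 2d_k\le 2\theta^kd_0$, with $d_0\le\Wcal_\infty(F,F^\ast)$.
\end{itemize}

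The uniform bound $\inf_{Q\in B}\varepsilon_Q>0$ is the same ingredient the paper imports from \cite[Remark~3.11]{AcMaPa23}. Your heuristic for it is wrong as stated, for two reasons.
\begin{itemize}
\item A lower bound on $(S_Q^q)'$ controls $K_Q^q$ from above, not from below.
\item \ref{ass:A7} bounds $Q_\nu'=1/(\rho_\nu\circ Q_\nu)$ only from above.
\end{itemize}
A lower bound on $T_Q^q$ must instead use the integrated strict monotonicity of $Q_\nu$. For instance, with $I(s):=\int_0^1F_q(s-Q(w))\,dw$, one has $\int_{s_1}^{s_2}Q_\nu'(I(x+z))\,I'(x+z)\,dz=Q_\nu(I(x+s_2))-Q_\nu(I(x+s_1))$. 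This must be combined with a compactness argument in $(x,y,Q)$.

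Your first-step remark for $F\notin\CDF_c$ is correct and covers a point the paper skips. The map $q\star(Q_\nu\circ(q\ast F))$ is a continuous increasing bijection from $\RR$ onto $\interior(\co(\supp(\nu)))$. By \ref{ass:A5}--\ref{ass:A6} and convex order, $Q_\mu$ takes values in a compact subinterval. So all iterates from $k=1$ on have bounded quantile functions.
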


\begin{proof}
The proof follows the arguments of \cite[Proofs of Theorems~1.2 and 1.4]{AcMaPa23}, given that Proposition~\ref{prop:properties-G_q} has been established. For $Q_1,Q_2\in L^\infty(0,1)$, the integral form of Taylor's formula gives
\[
\Gcal_qQ_1-\Gcal_qQ_2=\int_0^1D_{Q_2+t(Q_1-Q_2)}\Gcal_q(Q_1-Q_2)\,dt
\]
in $L^\infty(0,1)$. Proposition~\ref{prop:properties-G_q}\ref{it:mainProposition.4} then yields non-expansiveness, as in \cite[Proof of Theorem~1.2]{AcMaPa23}. If $Q_1-Q_2$ is not constant, then $\|D_{Q_2+t(Q_1-Q_2)}\Gcal_q(Q_1-Q_2)\|_\infty<\|Q_1-Q_2\|_\infty$ for every $t\in[0,1]$. Since the derivative depends continuously on its base point in operator norm, this estimate is uniform in $t$, and hence $\|\Gcal_qQ_1-\Gcal_qQ_2\|_\infty<\|Q_1-Q_2\|_\infty$. Conversely, if $Q_1-Q_2$ is constant, the translation invariance of $\Gcal_q$ yields equality. This proves the characterization of the equality case.

For linear convergence, the corresponding versions of \cite[Lemmas~3.4 and 3.7]{AcMaPa23} imply that, for every $R>0$, there exist $\varepsilon(R),\delta(R)>0$ such that $\varepsilon(R)\leq K_Q^q(u,v)\leq\delta(R)$ for every $Q$ with $\|Q\|_\infty\leq R$ and almost every $(u,v)\in(0,1)^2$; cf. \cite[Remark~3.11, equation~(3.24)]{AcMaPa23}. As in \cite[Proof of Theorem~1.4(i), before equation~(3.28)]{AcMaPa23}, non-expansiveness implies that the iterates and their closest fixed points remain in a common bounded subset of $L^\infty(0,1)$. Applying the integral form of Taylor's formula along the segment joining an iterate $Q_k$ to a closest fixed point $Q_k^\ast$ gives
\[
Q_k^\ast-Q_{k+1}=\int_0^1D_{Q_k+t(Q_k^\ast-Q_k)}\Gcal_q(Q_k^\ast-Q_k)\,dt
\]
in $L^\infty(0,1)$. The two-case estimate in \cite[Proof of Theorem~1.4(i), equations~(3.29)--(3.30) and the subsequent argument]{AcMaPa23} therefore applies without further modification and yields a constant $\theta\in(0,1)$ independent of $k$. Finally, the relative compactness of the bounded sequence of quantile functions, the lower semicontinuity of the $L^\infty$-norm, and the resulting estimate in \ref{it:conv_2} follow as in \cite[Proof of Theorem~1.4(ii)]{AcMaPa23}.
\end{proof}

\subsection{Non-expansiveness under weaker assumptions}

Although the linear convergence result in Theorem~\ref{thm:linear-convergence} relies on Assumption~\ref{ass:linear-convergence}, its non-expansiveness conclusion remains valid under considerably weaker hypotheses. This is obtained by approximating the reference measure and passing to the limit in the quantile representation of $\Acal_q$.

\begin{corollary}\label{cor:general-non-expansivness}
Let $\mu,\nu\in\Prob_\infty(\RR)$ and $q\in\Prob(\RR)$ be such that $q\ll\lambda$ and $\nu$ is not a Dirac measure. Let $F_1,F_2\in\CDF_c$, and suppose that Assumption~\ref{ass:technical-assumptions-2} holds. Assume, in addition, that one of the following conditions holds:
\begin{enumerate}[label=(\roman*)]
    \item \label{it:non-expan:assumptio1} $\supp(q)=\RR$; 
    \item \label{it:non-expan:assumptio2} $\supp(q)$ and $\supp(\mu)$ are intervals, $\mu,\nu\ll\lambda$, and $F_1,F_2$ are increasing on the convex hulls of their supports.
\end{enumerate}
Then
\[
\Wcal_\infty(\Acal_qF_1,\Acal_qF_2)\leq\Wcal_\infty(F_1,F_2).
\]
Under condition~\ref{it:non-expan:assumptio2}, $\Acal_qF_1$ and $\Acal_qF_2$ are also increasing on the convex hulls of their supports.

In particular, $\Wcal_\infty(\Acal_qF,F^*)\leq\Wcal_\infty(F,F^*)$ for every $F\in\CDF_c$ and every fixed point $F^*\in\CDF$ of $\Acal_q$, provided, under condition~\ref{it:non-expan:assumptio2}, that $F$ and $F^*$ are increasing on the convex hulls of their supports.
\end{corollary}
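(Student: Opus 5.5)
The plan is to approximate the data by data satisfying the hypotheses of Theorem~\ref{thm:linear-convergence} (in fact only Proposition~\ref{prop:properties-G_q}), use non-expansiveness there, and then pass to the limit.

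Concretely, I would construct $q_n\ll\lambda$ equivalent to $\lambda$ with square-integrable density and $\rho_{q_n}\to\rho_q$ in $L^1$. One choice is $\rho_{q_n}:=(1-1/n)\,\rho_q\ast\phi_{1/n}+\tfrac1n\phi$, which satisfies~\ref{ass:A8}. Similarly, I would take $\nu_n$ satisfying~\ref{ass:A7}, compactly supported, with $\mu\preceq_c\nu_n$, $\supp(\mu)\subseteq\interior(\co(\supp(\nu_n)))$, the endpoint conditions~\ref{ass:A5}--\ref{ass:A6}, and $Q_{\nu_n}\to Q_\nu$ pointwise with a uniform $L^\infty$ bound. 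For instance, $Q_{\nu_n}$ can be a small increasing perturbation of $Q_\nu$ that slightly widens the support, arranged as in Corollary~\ref{cor:approx_nu_2}.

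With $F_i$ compactly supported and quantiles $Q^i\in L^\infty$, Proposition~\ref{prop:properties-G_q}\ref{it:mainProposition.2} and the Taylor argument in the proof of Theorem~\ref{thm:linear-convergence} give
\[
\|\Gcal_{q_n}^{(n)}Q^1-\Gcal_{q_n}^{(n)}Q^2\|_\infty\le\|Q^1-Q^2\|_\infty=\Wcal_\infty(F_1,F_2).
\]
This step only uses $\|D_Q\Gcal\|\le1$, not irreducibility.

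Next I would show $\Gcal^{(n)}_{q_n}Q^i\to\Gcal_qQ^i$ $\lambda$-a.e. on $(0,1)$. First, $S^{q_n,(n)}_{Q^i}\to S^q_{Q^i}$ pointwise, by the argument of Proposition~\ref{prop:q-Bass-operator-stability}: convergence of the inner averages in $F_{q_n}$, together with the density bound that allows passing $Q_{\nu_n}\to Q_\nu$ inside in $L^1$. Generalized inverses of nondecreasing functions then converge at every $y$ where the limit $S^q_{Q^i}$ is strictly increasing around $(S^q_{Q^i})^{-1}(y)$. Lower semicontinuity of $\|\cdot\|_\infty$ under a.e. convergence then yields $\|\Gcal_qQ^1-\Gcal_qQ^2\|_\infty\le\Wcal_\infty(F_1,F_2)$, which is the claim once $(\Acal_qF_i)^{-1}=\Gcal_qQ^i$ a.e.

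The main obstacle is the identification $(\Acal_qF_i)^{-1}=\Gcal_qQ^i$ together with the a.e. convergence of inverses, which both need strict monotonicity of $S^q_{Q^i}$ on the relevant range. Under~\ref{it:non-expan:assumptio1}, $F_q$ is strictly increasing everywhere and $Q_\nu$ is non-constant, so $S^q_Q$ is strictly increasing; then $F_\mu\circ S$ has generalized inverse $S^{-1}\circ Q_\mu$, and convergence holds except on the countable set of jumps. Under~\ref{it:non-expan:assumptio2}, strict increase of $S^q_{Q^i}$ only holds on the set where $x-Q^i(y)+z$ reaches the interior of $\supp(q)$ for some $y$. This set is an interval, because $\supp(q)$ is an interval and $F_i$ is increasing on the hull of its support, so $\supp(\alpha_i)$ is an interval. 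On this interval, $S$ maps onto $\interior(\co(\supp(\nu)))\supseteq\supp(\mu)$, and outside it $S$ is flat at the endpoints of $\supp(\nu)$, which $\mu$ does not charge since $\mu,\nu\ll\lambda$. I would check this carefully. Since $\mu\ll\lambda$ with interval support, $F_\mu$ is continuous and increasing on that support. Consequently $\Acal_qF_i=F_\mu\circ S$ is increasing on the hull of its support, which gives the additional monotonicity claim.

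The final statement follows by taking $F_2=F^*$, since $\Acal_qF^*=F^*$. Compact support of $F^*$ comes from Proposition~\ref{prop:compact-support-fixed-point} under Assumption~\ref{ass:technical-assumptions-2}.
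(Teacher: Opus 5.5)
Your architecture is the paper's. You regularize the data so that Proposition~\ref{prop:properties-G_q} applies, and you get $\|\Gcal^{(n)}_{q_n}Q^1-\Gcal^{(n)}_{q_n}Q^2\|_\infty\le\|Q^1-Q^2\|_\infty$ from the derivative bound. You then prove $\lambda$-a.e.\ convergence of $\Gcal^{(n)}_{q_n}Q^i$ through pointwise convergence of $S$ and of generalized inverses, and conclude by lower semicontinuity of $\|\cdot\|_\infty$. Two of your choices are legitimate simplifications of the paper's route:
\begin{itemize}
\item You keep $\mu$ fixed, so you never need $Q_{\mu_k}\to Q_\mu$.
\item You invoke Proposition~\ref{prop:properties-G_q} directly instead of Theorem~\ref{thm:linear-convergence}, whose statement carries an irreducibility hypothesis the corollary lacks.
\end{itemize}
Separately, the identification $(\Acal_qF_i)^{-1}=\Gcal_qQ^i$ is not an obstacle and needs no strict monotonicity. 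Since $F_\mu(y)\ge u\iff y\ge Q_\mu(u)$, one has $(F_\mu\circ S)^{-1}(u)=\inf\{x:S(x)\ge Q_\mu(u)\}$ for every non-decreasing $S$.

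The step that fails as justified is the pointwise convergence $S^{q_n,(n)}_{Q^i}(x)\to S^q_{Q^i}(x)$.
\begin{itemize}
\item The density bound in Proposition~\ref{prop:q-Bass-operator-stability} comes from averaging over $u\in(0,p)$. There $X_{k,p}=Q_k(U_p)+Z_k$ has law at most $\frac1p\,(\alpha_k\ast q_k)$, so $F_{\alpha_k\ast q_k}(X_{k,p})$ has density at most $1/p$.
\item At a fixed $x$, the argument of $Q_\nu$ is $T=F_{\alpha_i\ast q}(x+Z)$ with $Z\sim q$. Its density at $t=F_{\alpha_i\ast q}(x+z)$ is $\rho_q(z)/\rho_{\alpha_i\ast q}(x+z)$, which is unbounded in general. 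For instance, with $\alpha_i=\delta_0$, $q=\gamma$ and $x\neq0$ it equals $e^{xz+x^2/2}$.
\end{itemize}
So there is no $L^1$ mechanism for passing $Q_{\nu_n}\to Q_\nu$ inside the integral.

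The paper's argument replaces it. All quantile functions here are uniformly bounded, so dominated convergence reduces the claim to $Q_{\nu_n}(T_n)\to Q_\nu(T)$ a.s. This holds, by monotonicity of $Q_{\nu_n}$, as soon as $T$ a.s.\ avoids the countable discontinuity set of $Q_\nu$. This is where conditions (i) and (ii) enter this step, not only the inversion step:
\begin{itemize}
\item Under (i), $F_{\alpha_i\ast q}$ is strictly increasing, so $T$ is atomless.
\item Under (ii), $F_{\alpha_i\ast q}$ is strictly increasing on the interior of the interval $\supp(\alpha_i\ast q)$. Hence $T$ can have atoms only at $0$ and $1$, which happens when $\supp(q)$ is bounded on that side, even at points $x$ where $S(x)$ lies in $\interior(\co(\supp(\nu)))$.
\end{itemize}
In case (ii) these atoms are harmless only if $Q_{\nu_n}(0^+)\to Q_\nu(0)$ and $Q_{\nu_n}(1^-)\to Q_\nu(1)$. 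Your vanishing widening gives this, but pointwise convergence on $(0,1)$ alone would not. With this repair, the rest of your proposal goes through.
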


\begin{proof}
To emphasize the dependence on the marginals and the reference measure, write
\[
S_Q^{q,\nu}(x):=\int_\RR Q_\nu\left(\int_0^1F_q(x-Q(y)+z)\,dy\right)\rho_q(z)\,dz,\qquad \Gcal_q^{\mu,\nu}Q:=(S_Q^{q,\nu})^{-1}\circ Q_\mu,
\]
where $(S_Q^{q,\nu})^{-1}$ denotes the left-continuous inverse of $S_Q^{q,\nu}$. Set $Q_i:=F_i^{-1}$ for $i=1,2$. Let $(\mu_k,\nu_k)_{k\in\NN}$ be an approximating sequence as in \cite[Lemma~A.1]{AcMaPa23}, and let $(q_k)_{k\in\NN}$ be given by Proposition~\ref{prop:approx_q_2}. Thus, $(\mu_k,\nu_k,q_k)$ satisfies Assumptions~\ref{ass:technical-assumptions-2} and \ref{ass:linear-convergence} for every $k\in\NN$, while $\mu_k\to\mu$ and $\nu_k\to\nu$ in $\Wcal_1$ and $\rho_{q_k}\to\rho_q$ in $L^1(\RR)$. Hence, Theorem~\ref{thm:linear-convergence} gives
\begin{equation}\label{eq:non-expan-compact}
\left\|\Gcal_{q_k}^{\mu_k,\nu_k}Q_1-\Gcal_{q_k}^{\mu_k,\nu_k}Q_2\right\|_\infty\leq\|Q_1-Q_2\|_\infty,\qquad k\in\NN.
\end{equation}

We show that, under either condition~\ref{it:non-expan:assumptio1} or condition~\ref{it:non-expan:assumptio2},
\begin{equation}\label{eq:convergence-G-approx}
\Gcal_{q_k}^{\mu_k,\nu_k}Q_i(u)\longrightarrow\Gcal_q^{\mu,\nu}Q_i(u)
\end{equation}
for almost every $u\in(0,1)$ and $i=1,2$.

Assume first that condition~\ref{it:non-expan:assumptio1} holds. Since $\supp(q)=\RR$, the map $S_{Q_i}^{q,\nu}$ is continuous and increasing, with range $\interior(\co(\supp(\nu)))$. This follows as in \cite[Lemma~3.4(i)--(ii)]{AcMaPa23}, with the Gaussian distribution function and density replaced by $F_q$ and $\rho_q$. The convergence argument is the same as in \cite[Proof of Corollary~1.3]{AcMaPa23}, except that the reference measure is also approximated. To verify this additional point, let $U\sim\text{Unif}_{[0,1]}$ and denote by $\alpha_i$ the law of $Q_i(U)$. Then
\[
\left\|F_{\alpha_i\ast q_k}-F_{\alpha_i\ast q}\right\|_\infty\leq\left\|\rho_{q_k}-\rho_q\right\|_{L^1(\RR)}\longrightarrow0.
\]
Since $F_{\alpha_i\ast q}$ is increasing, the same dominated-convergence argument as in \cite[Proof of Corollary~1.3]{AcMaPa23} yields $S_{Q_i}^{q_k,\nu_k}(x)\to S_{Q_i}^{q,\nu}(x)$ for every $x\in\RR$. The convergence argument used there then gives
\[
(S_{Q_i}^{q_k,\nu_k})^{-1}\longrightarrow(S_{Q_i}^{q,\nu})^{-1}
\]
locally uniformly on $\interior(\co(\supp(\nu)))$. Together with $Q_{\mu_k}\to Q_\mu$ almost everywhere, this proves \eqref{eq:convergence-G-approx}.

Assume now that condition~\ref{it:non-expan:assumptio2} holds. Since $F_i$ is increasing on the convex hull of its support, $Q_i$ is continuous. Hence $\supp(\alpha_i)$ is an interval, and so is $\supp(\alpha_i\ast q)=\supp(\alpha_i)+\supp(q)$. Since $\alpha_i\ast q\ll\lambda$, the distribution function $F_{\alpha_i\ast q}$ is continuous and increasing on the interior of its support. Moreover, $\nu\ll\lambda$ implies that $Q_\nu$ is increasing. Therefore, $Q_\nu\circ F_{\alpha_i\ast q}$ is increasing on the interior of $\supp(\alpha_i\ast q)$. The convolution argument in \cite[Proof of Lemma~3.4(ii)]{AcMaPa23}, with the strict positivity of the Gaussian density replaced by the fact that every non-empty open subinterval of $\supp(q)$ has positive $q$-measure, shows that $S_{Q_i}^{q,\nu}$ is continuous and increasing on the region where it takes values in $\interior(\co(\supp(\nu)))$.

As above, $F_{\alpha_i\ast q_k}\to F_{\alpha_i\ast q}$ uniformly. Since $Q_\nu$ has at most countably many discontinuities and $F_{\alpha_i\ast q}$ is increasing on the interior of its support, for every $x\in\RR$ the argument of $Q_\nu$  belongs to its discontinuity set only on a $q$-null set of values of $z$. The same dominated-convergence argument therefore yields $S_{Q_i}^{q_k,\nu_k}(x)\to S_{Q_i}^{q,\nu}(x)$ for every $x\in\RR$. Since the limiting map is continuous and increasing on the relevant region,  the sequence of inverse functions $(S_{Q_i}^{q_k,\nu_k})^{-1}$ converges locally uniformly on $\interior(\co(\supp(\nu)))$, as in \cite[Proof of Corollary~1.3]{AcMaPa23}. Together with $Q_{\mu_k}\to Q_\mu$ almost everywhere, this proves \eqref{eq:convergence-G-approx} also in this case.

Using $(\Acal_qF_i)^{-1}=\Gcal_q^{\mu,\nu}Q_i$, the lower semicontinuity of the $L^\infty$-norm under almost-everywhere convergence and \eqref{eq:non-expan-compact} give
\[
\begin{aligned}
\Wcal_\infty(\Acal_qF_1,\Acal_qF_2)
&=\left\|\Gcal_q^{\mu,\nu}Q_1-\Gcal_q^{\mu,\nu}Q_2\right\|_\infty\leq\liminf_{k\to\infty}\left\|\Gcal_{q_k}^{\mu_k,\nu_k}Q_1-\Gcal_{q_k}^{\mu_k,\nu_k}Q_2\right\|_\infty\\
&\leq\|Q_1-Q_2\|_\infty
=\Wcal_\infty(F_1,F_2).
\end{aligned}
\]

Finally, under condition~\ref{it:non-expan:assumptio2}, the assumptions that $\mu\ll\lambda$ and that $\supp(\mu)$ is an interval imply that $Q_\mu$ is continuous. Since $(S_{Q_i}^{q,\nu})^{-1}$ is continuous on the range of $Q_\mu$, the quantile function
\[
(\Acal_qF_i)^{-1}=(S_{Q_i}^{q,\nu})^{-1}\circ Q_\mu
\]
is continuous. Hence $\Acal_qF_i$ is increasing on the convex hull of its support. The final claim follows by taking $F_2=F^*$, with the additional monotonicity assumptions stated under condition~\ref{it:non-expan:assumptio2}.
\end{proof}

\subsection{Convergence in the compactly supported setting}

Non-expansiveness controls the distance between different iterates but does not, by itself, guarantee convergence of the fixed-point sequence. To overcome this limitation under weaker assumptions, we combine it with a strict descent property of the dual objective along the iteration. In the Gaussian case, its strict descent along the fixed-point sequence was established in \cite[Theorem~3.5]{HaJoLoObPa26}. The next theorem extends this property to arbitrary absolutely continuous reference measures $q$. We restrict ourselves to compactly supported measures $\mu$ and $\nu$, which is the most relevant setting for numerical applications.

\begin{theorem}[Strict descent in the compact case]\label{thm:strict-descent}
Let $\mu,\nu\in\Prob_\infty(\RR)$ and let $q\in\Prob_1(\RR)$ be such that $q\ll\lambda$. Under Assumption~\ref{ass:technical-assumptions-2}, let $F\in\CDF$, and denote by $\alpha_k\in\Prob_\infty(\RR)$ the probability measure with CDF $\Acal_q^kF$. Let
\[
\Ecal^q(v):=\int_\RR v(x)\,\nu(dx)-\int_\RR(v^\ast\star q)^\ast(x)\,\mu(dx)
\]
be the dual objective associated with \eqref{eq:dual-problem}. For every $k\in\NN$, let $v_k$ be a convex lower semicontinuous function such that
\[
(v_k^\ast)'=Q_\nu\circ F_{\alpha_k\ast q}.
\]
Then, for every $k\geq1$,
\[
\Ecal^q(v_k)\leq\Ecal^q(v_{k-1}).
\]
Moreover, the inequality is strict unless $\alpha_k$ is a fixed-point distribution.
\end{theorem}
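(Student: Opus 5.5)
The plan is to adapt the Fenchel--Young argument behind the Gaussian strict-descent result \cite[Theorem~3.5]{HaJoLoObPa26}. The comparison between $\Ecal^q(v_k)$ and $\Ecal^q(v_{k-1})$ will go through one intermediate quantity: the maximal covariance between $\alpha_k\ast q$ and $\nu$.

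\emph{Notation and the representation of $\alpha_k$.} For each $k$ set $T_k:=Q_\nu\circ F_{\alpha_k\ast q}=(v_k^\ast)'$ and $w_k:=v_k^\ast\star q$. Then $w_k$ is convex with $w_k'=q\star T_k=:S_k$, and $S_k$ is the map $S^q_{Q_{\alpha_k}}$ of \eqref{aux_operator_def1}. Since $\Acal_q^kF=F_\mu\circ S_{k-1}$, the measure $\alpha_k$ is the pull-back of $\mu$ through $S_{k-1}$. Concretely, I would take $M_0\sim\mu$ and $X_k:=S_{k-1}^{-1}(M_0)$, using the left-continuous inverse as in Definition~\ref{def:G_q}, so that $X_k\sim\alpha_k$ and $S_{k-1}(X_k)=M_0$ a.s. Let $Z\sim q$ be independent of $(M_0,X_k)$. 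Before any of this, I would check the preliminaries:
\begin{itemize}
\item all $\alpha_k$ are compactly supported (Assumption~\ref{ass:technical-assumptions-2} together with Proposition~\ref{prop:compact-support-fixed-point}-type endpoint arguments);
\item $v_k$ is finite on $\co(\supp(\nu))$;
\item $q\in\Prob_1(\RR)$ makes $w_k$ finite and all expectations below integrable.
\end{itemize}

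\emph{The two Fenchel--Young steps.}
\begin{enumerate}
\item Since $X_k\in\partial w_{k-1}^\ast(M_0)$, we have the equality $w_{k-1}^\ast(M_0)=M_0X_k-w_{k-1}(X_k)$. For $v_k$ we only have the inequality $w_k^\ast(M_0)\ge M_0X_k-w_k(X_k)$. This gives
\[
\Ecal^q(v_k)\le \int v_k\,d\nu+\EE\bigl[v_k^\ast(X_k+Z)\bigr]-\EE[M_0X_k],
\]
and the same expression with $v_{k-1}$ equals $\Ecal^q(v_{k-1})$.
\item Put $Y:=T_k(X_k+Z)\sim\nu$, which is the monotone coupling with $X_k+Z\sim\alpha_k\ast q$. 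Fenchel--Young gives
\[
\int v\,d\nu+\EE\bigl[v^\ast(X_k+Z)\bigr]\ge\EE\bigl[Y(X_k+Z)\bigr]=:C_k\quad\text{for every convex } v.
\]
Equality holds for $v=v_k$, because $Y=(v_k^\ast)'(X_k+Z)$.
\end{enumerate}
Combining the two steps yields the chain
\[
\Ecal^q(v_k)\le C_k-\EE[M_0X_k]\le\Ecal^q(v_{k-1}).
\]

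\emph{Strictness.} Suppose $\Ecal^q(v_k)=\Ecal^q(v_{k-1})$. Then both inequalities in the chain are equalities.
\begin{itemize}
\item Equality in the second step forces $Y\in\partial v_{k-1}^\ast(X_k+Z)$ a.s., i.e.\ $T_{k-1}=T_k$ $(\alpha_k\ast q)$-a.e.
\item Equality in the first step forces $X_k\in\partial w_k^\ast(M_0)$ a.s. Equivalently, $M_0\in\partial w_k(X_k)=\{S_k(X_k)\}$, so $S_k(X_k)=M_0$ a.s.
\end{itemize}
The second bullet says $(q\star T_k)_\#\alpha_k=\mu$. Moreover, $q\star T_k$ is nondecreasing, and it is increasing $\alpha_k$-a.s. because $X_k=S_{k-1}^{-1}(M_0)$ with $S_{k-1}=S_k$ on the support. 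Hence $\alpha_k$ satisfies Definition~\ref{def:q-Bass}, and via \eqref{eq:fixed-point_equiv} it is a fixed-point distribution.

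\emph{Main obstacle.} I expect the delicate part to be this last identification, together with justifying the a.s.\ subdifferential equalities. Here $S_k$ is merely nondecreasing, so $\partial w_k^\ast$ may be set-valued, and the left-continuous inverse must be handled carefully. I would first try to use the structure of $\Gcal_q$ and the endpoint conditions \ref{ass:A5}--\ref{ass:A6} to ensure that $S_{k-1}$ maps the convex hull of $\supp(\alpha_k)$ onto a set containing $\supp(\mu)$. With that in hand, the generalized inverse gives $X_k\in\partial w_{k-1}^\ast(M_0)$ exactly.
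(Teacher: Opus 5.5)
Your proposal is correct and follows the paper's route: the paper's proof consists of the single line that the argument of \cite[Theorem~3.5]{HaJoLoObPa26} carries over with $q$ in place of the Gaussian. Your chain $\Ecal^q(v_k)\le C_k-\EE[M_0X_k]\le\Ecal^q(v_{k-1})$ is exactly the Fenchel--Young (alternating minimization of $\int v\,d\nu+\int v^\ast\,d(\alpha\ast q)-\MCov(\mu,\alpha)$) argument you explicitly set out to adapt from that result. Your treatment of the equality case, via $S_k(X_k)=M_0=S_{k-1}(X_k)$ and the injectivity of $S_{k-1}$ on the range of its left-continuous inverse, is sound.

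Two minor remarks on your preliminaries. First, $v_k$ may be $+\infty$ at the endpoints of $\co(\supp(\nu))$. This is harmless, since $\int v_k\,d\nu=\EE[Y(X_k+Z)-v_k^\ast(X_k+Z)]$ is finite by the Fenchel--Young equality. Second, the range issue you flag is settled by the continuity of $S_{k-1}$ (which follows from $q\ll\lambda$) together with $\ell_\nu<\ell_\mu\le r_\mu<r_\nu$.
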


\begin{proof}
The proof is identical to that of \cite[Theorem~3.5]{HaJoLoObPa26}, with the Gaussian reference measure replaced by $q$.
\end{proof}

We can now combine non-expansiveness, compactness of the iterates, and strict descent of the dual objective to prove the main convergence result, that was stated in the Introduction.

\begin{proof}[Proof of Theorem~\ref{thm:convergence}]
Let $(Q_k)_{k\in\NN}$ and $(\alpha_k)_{k\in\NN}$ be defined as in the statement. Since $\alpha_0\in\Prob_\infty(\RR)$, we have $Q_k\in L^\infty(0,1)$ for every $k\in\NN$.
Fix $\overline\alpha\in\Lcal^\ast$ and denote its quantile function by $\overline Q$. By Proposition~\ref{prop:compact-support-fixed-point}, $\overline Q\in L^\infty(0,1)$. Since both $Q_0$ and $\overline Q$ are bounded, there exist $c_-,c_+\in\RR$ such that
\[
\overline Q+c_-\leq Q_0\leq\overline Q+c_+.
\]
By monotonicity and shift-invariance of $\Gcal_q$, the quantile functions $\overline Q+c_-$ and $\overline Q+c_+$ are fixed points and
\[
\overline Q+c_-\leq Q_k\leq\overline Q+c_+,\qquad k\in\NN.
\]
Consequently, the measures $(\alpha_k)_{k\in\NN}$ are supported in a common compact interval.

Let $v_k$ be the convex potential associated with $\alpha_k$ as in Theorem~\ref{thm:strict-descent}. If some $\alpha_k$ is a fixed-point distribution, then the sequence is constant from that index onward and there is nothing to prove. Otherwise, Theorem~\ref{thm:strict-descent} shows that $(\Ecal^q(v_k))_{k\in\NN}$ is decreasing.  Hence, $\Ecal^q(v_k)\longrightarrow L_{\Ecal}$ for some $L_{\Ecal}\in\RR$. In particular, $L_{\Ecal}>-\infty$. Indeed, otherwise the dual problem \eqref{eq:dual-problem} associated with \eqref{def:WT^q_S} would fail to have a finite value. This is impossible, since $\mu,\nu\in\Prob_\infty(\RR)\subseteq\Prob_2(\RR)$ and \eqref{def:WT^q_S} is finite, as
\[
|\Scal(\pi)|\leq \int_\RR \left|\MCov(\pi_x,q)\right|\, \mu(dx)\leq \frac{1}{2}\int_\RR |x|^2\,\mu(dx) + \frac{1}{2}\int_\RR |x|^2\,\nu(dx)<\infty.
\]

Set $\beta_k:=\alpha_k\ast q$, $T_k:=Q_\nu\circ F_{\beta_k}$, and $\varphi_k:=v_k^\ast$, where the additive constant in $\varphi_k$ is chosen so that $\varphi_k(0)=0$. Thus
\[
\varphi_k(x)=\int_0^xT_k(y)\,dy,\qquad x\in\RR.
\]
The quantile functions $Q_k$ are uniformly bounded, and the non-decreasing maps $T_k$ take values in the compact interval $\co(\supp(\nu))$. Therefore, by successive applications of Helly’s selection theorem, we may extract a subsequence $(k_n)_{n\in\NN}$ and find bounded quantile functions $\widetilde Q_-$, $\widetilde Q$, and $\widetilde Q_+$, together with non-decreasing functions $\widetilde T_-$ and $\widetilde T$, such that
\[
Q_{k_n-1}\longrightarrow\widetilde Q_-,\qquad 
Q_{k_n}\longrightarrow\widetilde Q, \qquad
Q_{k_n+1}\longrightarrow\widetilde Q_+, \qquad
T_{k_n-1}\longrightarrow\widetilde T_-,\quad \text{and} \quad
T_{k_n}\longrightarrow\widetilde T
\]
at every continuity point of the respective limit functions.
Denote by $\widetilde\alpha_-$, $\widetilde\alpha$, and $\widetilde\alpha_+$ the probability measures associated with $\widetilde Q_-$, $\widetilde Q$, and $\widetilde Q_+$. Since all these quantile functions take values in a common compact interval,
\[
\alpha_{k_n-1}\longrightarrow\widetilde\alpha_-,\qquad
\alpha_{k_n}\longrightarrow\widetilde\alpha,\qquad
\alpha_{k_n+1}\longrightarrow\widetilde\alpha_+
\]
in $\Wcal_1$. Consequently, $\beta_{k_n-1}\to\widetilde\beta_-:=\widetilde\alpha_-\ast q$ and $\beta_{k_n}\to\widetilde\beta:=\widetilde\alpha\ast q$ in $\Wcal_1$.

Since $q\ll\lambda$, the measures $\widetilde\beta_-$ and $\widetilde\beta$ are atomless. The identities
\[
T_k(Q_{\beta_k}(u))=Q_\nu(u),\qquad \lambda\text{-a.e. }u\in(0,1),
\]
together with the convergence of the quantiles and the monotonicity of the transport maps, show that $\widetilde T_-$ and $\widetilde T$ are the monotone transport maps from $\widetilde\beta_-$ and $\widetilde\beta$ to $\nu$, respectively. Additionally, setting
\[
\widetilde\varphi_-(x):=\int_0^x\widetilde T_-(y)\,dy,
\qquad
\widetilde\varphi(x):=\int_0^x\widetilde T(y)\,dy,
\]
we have $\varphi_{k_n-1}\to\widetilde\varphi_-$ and $\varphi_{k_n}\to\widetilde\varphi$ locally uniformly on $\RR$.

Set $\psi_k:=\varphi_k\star q$, $\widetilde\psi_-:=\widetilde\varphi_-\star q$, and $\widetilde\psi:=\widetilde\varphi\star q$. Since the functions $\varphi_k$ have a common Lipschitz constant and satisfy $\varphi_k(0)=0$, and $q\in\Prob_1(\RR)$, the above local uniform convergence implies $\psi_{k_n-1}\to\widetilde\psi_-$ and $\psi_{k_n}\to\widetilde\psi$ locally uniformly on $\RR$.
For every $k\in\NN$, the iterates satisfy
$Q_\mu(u)\in\partial\psi_k(Q_{k+1}(u))$ for $\lambda$-a.e. $u\in(0,1)$.  In particular, choosing $k=k_n-1$ and $k=k_n$, respectively, we obtain
$Q_\mu(u)\in\partial\psi_{k_n-1}(Q_{k_n}(u))$ and $Q_\mu(u)\in\partial\psi_{k_n}(Q_{k_n+1}(u))$. Passing to the limit as $n\to\infty$ in the two corresponding subgradient inequalities, associated with $k=k_n-1$ and $k=k_n$,
respectively, yields
\[
Q_\mu(u)\in\partial\widetilde\psi_-(\widetilde Q(u)),
\qquad
Q_\mu(u)\in\partial\widetilde\psi(\widetilde Q_+(u)),\qquad \text{for $\lambda$-a.e. $u\in(0,1)$.}
\]
Thus, $\widetilde\alpha$ is obtained from $\widetilde\alpha_-$, and $\widetilde\alpha_+$ from $\widetilde\alpha$, by applying the same fixed-point algorithm as in the original iteration.

We use the following elementary observation: if $\eta_n\to\eta$ in $\Wcal_1$, the functions $f_n$ have a common Lipschitz constant, $(f_n(0))_{n\in\NN}$ is bounded, and $f_n\to f$ locally uniformly, then
\[
\int_\RR f_n\,d\eta_n\longrightarrow\int_\RR f\,d\eta.
\]
Indeed, if $L$ is a common Lipschitz constant, then
\[
\left|\int_\RR f_n\,d\eta_n-\int_\RR f\,d\eta\right|
\leq L\Wcal_1(\eta_n,\eta)+\int_\RR|f_n-f|\,d\eta,
\]
and the last term converges to zero by dominated convergence.

Since $\varphi_k$ induces the monotone transport from $\beta_k$ to $\nu$, while $\psi_k^\ast$ induces the update from $\mu$ to $\alpha_{k+1}$, we have
\[
\Ecal^q(v_k)
=
\MCov(\beta_k,\nu)
-\int_\RR\varphi_k\,d\beta_k
-\MCov(\mu,\alpha_{k+1})
+\int_\RR\psi_k\,d\alpha_{k+1}.
\]
Moreover, for $\zeta\in\{\mu,\nu\}$, $\left|\MCov(\eta_n,\zeta)-\MCov(\eta,\zeta)\right|\leq\|Q_\zeta\|_\infty\Wcal_1(\eta_n,\eta)$. The above convergences therefore yield
\[
\Ecal^q(v_{k_n-1})\longrightarrow\Ecal^q(\widetilde\varphi_-^\ast),
\qquad
\Ecal^q(v_{k_n})\longrightarrow\Ecal^q(\widetilde\varphi^\ast).
\]
Since both subsequences of $(\Ecal^q(v_k))_{k\in\NN}$ converge to $L_{\Ecal}$,
\[
\Ecal^q(\widetilde\varphi^\ast)=\Ecal^q(\widetilde\varphi_-^\ast)=L_{\Ecal}.
\]
Applying the equality characterization in Theorem~\ref{thm:strict-descent} to the limiting pair $(\widetilde\alpha_-,\widetilde\alpha)$ shows that $\widetilde\alpha$ is a fixed-point distribution. Hence, the next application of the fixed-point algorithm leaves it unchanged, and therefore $\widetilde\alpha_+=\widetilde\alpha$.

Since the previous argument applies to every subsequence of $(\alpha_k)_{k\in\NN}$, each admits a further subsequence converging weakly to an element of $\Lcal^\ast$. Since all measures are supported on a common compact interval, weak convergence implies $\Wcal_p$-convergence for every $p<\infty$. Furthermore, if $\Wcal_p(\alpha_k,\Lcal^\ast)$ did not converge to zero, a subsequence bounded away from $\Lcal^\ast$ would admit a further subsequence converging in $\Wcal_p$ to an element of $\Lcal^\ast$, which is a contradiction. Therefore,
\[
\Wcal_p(\alpha_k,\Lcal^\ast)\longrightarrow0,\qquad p\in[1,\infty).
\]

Assume now that either condition~(i) or condition~(ii) holds. Let $\widetilde\alpha\in\Lcal^\ast$ be the limiting distribution obtained above and set $\widetilde S_-:=q\star\widetilde T_-$.
As in the proof of Corollary~\ref{cor:general-non-expansivness}, $S_{Q_{k_n-1}}^q\to\widetilde S_-$ locally uniformly, and the associated left-continuous generalized inverses converge uniformly on the compact range of $Q_\mu$. Since the limit functions satisfy $\widetilde Q=(\widetilde S_-)^{-1}\circ Q_\mu$, we obtain
\[
\Wcal_\infty(\alpha_{k_n},\widetilde\alpha)=\|Q_{k_n}-\widetilde Q\|_\infty=\left\|(S_{Q_{k_n-1}}^q)^{-1}\circ Q_\mu-(\widetilde S_-)^{-1}\circ Q_\mu\right\|_\infty\longrightarrow0.
\]
Under condition~(ii), the distribution function of $\widetilde\alpha$ is increasing on the convex hull of its support by Lemma~\ref{lem:fixed-point-connected-support}. Hence, by
Corollary~\ref{cor:general-non-expansivness},
\[
\Wcal_\infty(\alpha_{k+1},\widetilde\alpha) \leq \Wcal_\infty(\alpha_k,\widetilde\alpha),\qquad k\in\NN.
\]
Thus $(\Wcal_\infty(\alpha_k,\widetilde\alpha))_{k\in\NN}$ is non-increasing and has a subsequence converging to zero. Hence $\Wcal_\infty(\alpha_k,\widetilde\alpha)\to0$, and therefore $\Wcal_\infty(\alpha_k,\Lcal^\ast)\to0$.

Finally, if Assumption~\ref{ass:linear-convergence} holds, Theorem~\ref{thm:linear-convergence} yields  $\theta\in(0,1)$ such that
\[
\Wcal_\infty(\alpha_{k+1},\Lcal^\ast)\leq\theta\,\Wcal_\infty(\alpha_k,\Lcal^\ast),\qquad\text{and}\qquad\Wcal_\infty(\alpha_{k+1},\widetilde\alpha)\leq2\theta^k\Wcal_\infty(\alpha_1,\widetilde\alpha),\qquad k\in\NN.
\]
\end{proof}

\begin{lemma}\label{lem:fixed-point-connected-support}
Let $\mu,\nu\in\Prob_\infty(\RR)$, and $q\in\Prob(\RR)$ satisfy $q\ll\lambda$. Assume that $\supp(q)$ and $\supp(\mu)$ are intervals and that $\mu,\nu\ll\lambda$. Then every fixed point $F^\ast$ of $\Acal_q$ is increasing on the convex hull of its support.
\end{lemma}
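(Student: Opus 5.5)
Let $F^\ast$ be a fixed point of $\Acal_q$, let $\alpha^\ast$ be the associated measure, and set $Q^\ast:=(F^\ast)^{-1}$. The claim is that $F^\ast$ is increasing on $\co(\supp(\alpha^\ast))$. This is equivalent to $Q^\ast$ being continuous on $(0,1)$, i.e.\ $\supp(\alpha^\ast)$ being an interval. So I plan to show that $Q^\ast$ has no jumps.

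\textbf{Step 1: regularity of $S:=S^q_{Q^\ast}$.} Write $T:=Q_\nu\circ F_{\alpha^\ast\ast q}$, so that $S=q\star T$. Since $\mu,\nu\in\Prob_\infty(\RR)$ and $F^\ast=\Acal_qF^\ast=F_\mu\circ S$, the measure $\alpha^\ast$ is compactly supported and $Q^\ast$ is bounded. Because $\nu\ll\lambda$, $Q_\nu$ is increasing and $T$ is non-decreasing. Because $\alpha^\ast\ast q\ll\lambda$, the map $S=q\star T$ is continuous. This uses absolute continuity of $q$: the function $x\mapsto\int T(x+z)\rho_q(z)\,dz$ is continuous when $T$ is bounded and $\rho_q\in L^1$, by continuity of translations in $L^1$.

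\textbf{Step 2: the fixed-point relation.} From $F^\ast=F_\mu\circ S$, the measure $\alpha^\ast$ is the pushforward of $\mu$ under the generalized inverse of $S$. Equivalently, $Q^\ast=S^{-1}\circ Q_\mu$ with the left-continuous inverse, as in Proposition~\ref{prop:properties-G_q}\ref{it:mainProposition.2}.

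\textbf{Step 3: ruling out a jump.} Since $\mu\ll\lambda$ and $\supp(\mu)$ is an interval, $Q_\mu$ is continuous and strictly increasing. Hence any jump of $Q^\ast$ at some $u$ must come from $S^{-1}$ jumping at $y=Q_\mu(u)\in\interior(\co(\supp(\mu)))$. That happens only if $S$ is constant, equal to $y$, on a nondegenerate interval $(a,b)$ with $\alpha^\ast((a,b))=0$ and $a,b\in\supp(\alpha^\ast)$.

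Now use that $\supp(q)=[c,d]$ is an interval. On $(a,b)$ we have $S(x)=\int_c^d T(x+z)\rho_q(z)\,dz$. Since $T$ is non-decreasing and $S$ is constant, $T$ must be constant, $q$-a.e.\ in $z$, on the relevant translates: taking $x<x'$ in $(a,b)$, $T(x'+z)-T(x+z)\ge0$ integrates to $0$. Hence $T$ is constant on $(a+c,b+d)$, up to null sets, because $\rho_q>0$ a.e.\ on $[c,d]$.

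I expect this to be the main obstacle: the support of $q$ being an interval does not immediately give $\rho_q>0$ a.e.\ there. Instead I will use that every open subinterval of $\supp(q)$ has positive $q$-measure, which is the same device used in Corollary~\ref{cor:general-non-expansivness}\ref{it:non-expan:assumptio2}. With that, choosing $x'$ close to $x$ yields constancy of $T$ on overlapping windows covering $(a+c,b+d)$.

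Since $Q_\nu$ is strictly increasing, $F_{\alpha^\ast\ast q}$ must then be constant on $(a+c,b+d)$, i.e.\ $(\alpha^\ast\ast q)((a+c,b+d))=0$. But $a\in\supp(\alpha^\ast)$ and $\supp(q)=[c,d]$ with $c<d$, so $\alpha^\ast\ast q$ charges $(a+c,a+d)\cap(a+c,b+d)$, which is nonempty. This is a contradiction.

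One also has to check that $y$ lies in the range where $S$ takes values in $\interior(\co(\supp(\nu)))$; this follows because $\supp(\mu)\subseteq\co(\supp(\nu))$ by convex order. Endpoint cases are excluded since only interior $u$ matter. Hence $Q^\ast$ is continuous and $F^\ast$ is increasing on $\co(\supp(\alpha^\ast))$.
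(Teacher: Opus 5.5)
Your proof is correct and follows essentially the paper's route: continuity of $S=q\star T$, a gap $(a,b)$ in $\supp(\alpha^\ast)$ forcing $S$ to be constant on $[a,b]$, and a contradiction from the strict monotonicity of $Q_\nu$ and the interval support of $q$. The only difference is the last step: you propagate flatness of $T$ to $(a+c,b+d)$ and contradict $a\in\supp(\alpha^\ast)$, whereas the paper shows $\PP(a<X+Z-Z'<b)>0$ directly. Drop the aside in Step~1 that $\alpha^\ast$ is compactly supported, which is false without Assumption~\ref{ass:technical-assumptions-2} (see Proposition~\ref{prop:compact-support-fixed-point}), and also drop the closing appeal to convex order, which is not a hypothesis of the lemma; neither is used, since boundedness of $T$ already follows from $\nu\in\Prob_\infty(\RR)$.
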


\begin{proof}
Let $\alpha^\ast$ be the probability measure with CDF $F^\ast$, set $Q^\ast:=(F^\ast)^{-1}$, and let $T:=Q_\nu\circ F_{\alpha^\ast\ast q}$. Then $S_{Q^\ast}^q=q\star T$ is continuous. Since $F^\ast$ is a fixed point, $F^\ast=F_\mu\circ S_{Q^\ast}^q$. In particular, $F^\ast$ is continuous because both $F_\mu$ and $S_{Q^\ast}^q$ are continuous.

Suppose, by contradiction, that $\supp(\alpha^\ast)$ is not an interval. Then there exist $a<b$ in $\supp(\alpha^\ast)$ such that $\alpha^\ast((a,b))=0$. Since $F^\ast$ is continuous, it is constant on $[a,b]$. Moreover, $F_\mu$ is increasing on $\supp(\mu)$ because $\supp(\mu)$ is an interval. The fixed-point identity therefore implies that $S_{Q^\ast}^q$ is constant on $[a,b]$.
We show that this is impossible. Let $X\sim\alpha^\ast$ and let $Z,Z'$ be independent random variables with law $q$, independent of $X$. Since $\supp(q)$ is an interval and $q\ll\lambda$, the support of $\Law(Z-Z')$ contains a neighborhood of zero. Choosing $\varepsilon\in(0,(b-a)/3)$ sufficiently small, we have
\[
\mathbb P\bigl(X\in(a-\varepsilon,a],\ \varepsilon<Z-Z'<2\varepsilon\bigr)>0.
\]
Consequently,
\[
\mathbb P\bigl(a<X+Z-Z'<b\bigr)>0,
\]
and hence
\[
\int_\RR\left[F_{\alpha^\ast\ast q}(b+z)-F_{\alpha^\ast\ast q}(a+z)\right]q(dz)>0.
\]
Thus, the set of points $z$ for which $F_{\alpha^\ast\ast q}(b+z)>F_{\alpha^\ast\ast q}(a+z)$ has positive $q$-measure. Since $\nu$ is atomless, $Q_\nu$ is increasing, and therefore
\[
S_{Q^\ast}^q(b)-S_{Q^\ast}^q(a)
=
\int_\RR\left[T(b+z)-T(a+z)\right]q(dz)>0.
\]
This contradicts the fact that $S_{Q^\ast}^q$ is constant on $[a,b]$. Hence $\supp(\alpha^\ast)$ is an interval, which is equivalent to $F^\ast$ being increasing on the convex hull of its support.
\end{proof}

\section{The Dynamic Problem}
\label{sec:dynamic}
The $q$-Bass martingale considered so far is a two-step martingale coupling between prescribed initial and terminal distributions. This naturally raises the question of whether the same construction admits a canonical dynamic interpolation and, if so, whether the resulting process can be characterized as the optimizer of a suitable dynamic martingale optimal transport problem.
To address these questions, we replace the reference random variable with law $q$ by an additive process $X$, whose independent increments provide reference dynamics at intermediate times. Although $X$ need not be a martingale, its semimartingale decomposition identifies a continuous local martingale part $X^c$ and a compensated jump measure $\widetilde\Ncal$. Moreover, in its natural filtration, an additive process has the weak representation property: every local martingale can be represented in terms of $X^c$ and $\widetilde\Ncal$.
This representation provides the natural framework for formulating dynamic counterparts of the static problems \eqref{def:WT^q_S} and \eqref{def:WT^q_I}. We will show that, whenever an $X$-Bass martingale in the sense of Definition~\ref{def:X_Bass} exists, it solves both dynamic optimization problems.
When $X$ is a standard Brownian motion, the $X$-Bass martingale coincides with the stretched Brownian motion, and the resulting optimization problem is precisely the martingale Benamou--Brenier formulation in \cite{BaBeHuKa20}.

\begin{definition}[Additive process]
Let $(\Omega,\Fcal,\mathbb F,\PP)$ be a filtered probability space. A real-valued process $X=(X_t)_{t\geq0}$ is called an additive process if
 $X_0=0$ a.s., $X$ has c\`adl\`ag paths and independent increments, and $X$ is stochastically continuous.
\end{definition}
Unlike a L\'evy process, an additive process need not have stationary increments. It therefore provides a time-dependent reference dynamics while retaining the independent-increment structure underlying the $q$-Bass construction.
We denote by $\mathbb F^X=(\mathcal F_t^X)_{t\in[0,1]}$  the completed, right-continuous natural filtration of $X$.

\begin{definition}[$X$-Bass martingale]\label{def:X_Bass}
Let $\mu,\nu\in\Prob_1(\RR)$, let $X=(X_t)_{t\in[0,1]}$ be an additive process, and set $q:=\Law(X_1)$. A martingale $M=(M_t)_{t\in[0,1]}$ is called an $X$-Bass martingale from $\mu$ to $\nu$ if $M_0\sim\mu, M_1\sim\nu$, and there exist a probability measure $\alpha\in\Prob(\RR)$, a non-decreasing map $T:\RR\to\RR$, and a random variable $A\sim\alpha$ independent of $X$, such that $T_\#(\alpha\ast q)=\nu$, $q\star T$ is increasing on a set of full $\alpha$-measure, and
\[
M_t=\EE\bigl[T(A+X_1)\mid\sigma(A)\vee\mathcal F_t^X\bigr],,\qquad t\in[0,1],
\]
or equivalently, setting $q_{t,1}:=\Law(X_1-X_t)$, 
\[
M_t=(q_{t,1}\star T)(A+X_t),\qquad t\in[0,1].
\]
\end{definition}

\begin{remark}
Suppose that $X_1\sim q\in\Prob(\RR)$ with $q\ll\lambda$. Then $\alpha\ast q$ is absolutely continuous and the non-decreasing map $T$ in the preceding definition coincides $(\alpha\ast q)$-a.s.\ with
\[
T=Q_\nu\circ F_{\alpha\ast q}.
\]
Moreover,
\[
M_0=(q\star T)(A),\qquad M_1=T(A+X_1).
\]
Consequently, $\Law(M_0,M_1)$ is a $q$-Bass martingale coupling from $\mu$ to $\nu$. 
\end{remark}

\begin{remark}[Discrete-time interpolation]
Let $Y_1,\dots,Y_n$ be independent random variables, and set $S_0:=0$ and $S_k:=\sum_{j=1}^kY_j$ for $k=1,\dots,n$. Fix  initial and terminal marginals $\mu,\nu\in\Prob_1(\RR)$, and let $\alpha$ be a fixed-point distribution for the $\Law(S_n)$-Bass martingale from $\mu$ to $\nu$. If $A\sim\alpha$ is independent of $(Y_1,\dots,Y_n)$ and $T$ is the corresponding non-decreasing terminal transport map from $\Law(S_n)$ to $\nu$, then
\[
M_k:=\EE\bigl[T(A+S_n)\mid A,S_k\bigr],\qquad k=0,\dots,n,
\]
defines a discrete-time martingale satisfying $M_0\sim\mu$ and $M_n\sim\nu$. In particular, the coupling $\Law(M_0,M_n)$ is a $\Law(S_n)$-Bass martingale coupling from $\mu$ to $\nu$.

More generally, for $k=0,\dots,n$, set
\[
    T_k(x):=\EE\bigl[T(x+S_n-S_k)\bigr],
\]
so that $M_k=T_k(A+S_k)$. If $0\leq r<s\leq n$ and $q_{r,s}:=\Law(S_s-S_r)$, then
\[
T_r=q_{r,s}\star T_s
\]
and the coupling $\Law(M_r,M_s)$ admits the representation
\[
M_r=(q_{r,s}\star T_s)(A+S_r),\qquad M_s=T_s(A+S_r+S_s-S_r).
\]
It is therefore a $q_{r,s}$-Bass martingale coupling from $\mu_r:=\Law(M_r)$ to $\mu_s:=\Law(M_s)$ whenever $T_r=q_{r,s}\star T_s$ is increasing on a set of full $\alpha_r$-measure. This condition holds, for instance, if $q_{r,s}\sim\lambda$ and $T_s$ is not constant.
\end{remark}

\begin{figure}
    \centering
    \includegraphics[width=\linewidth]{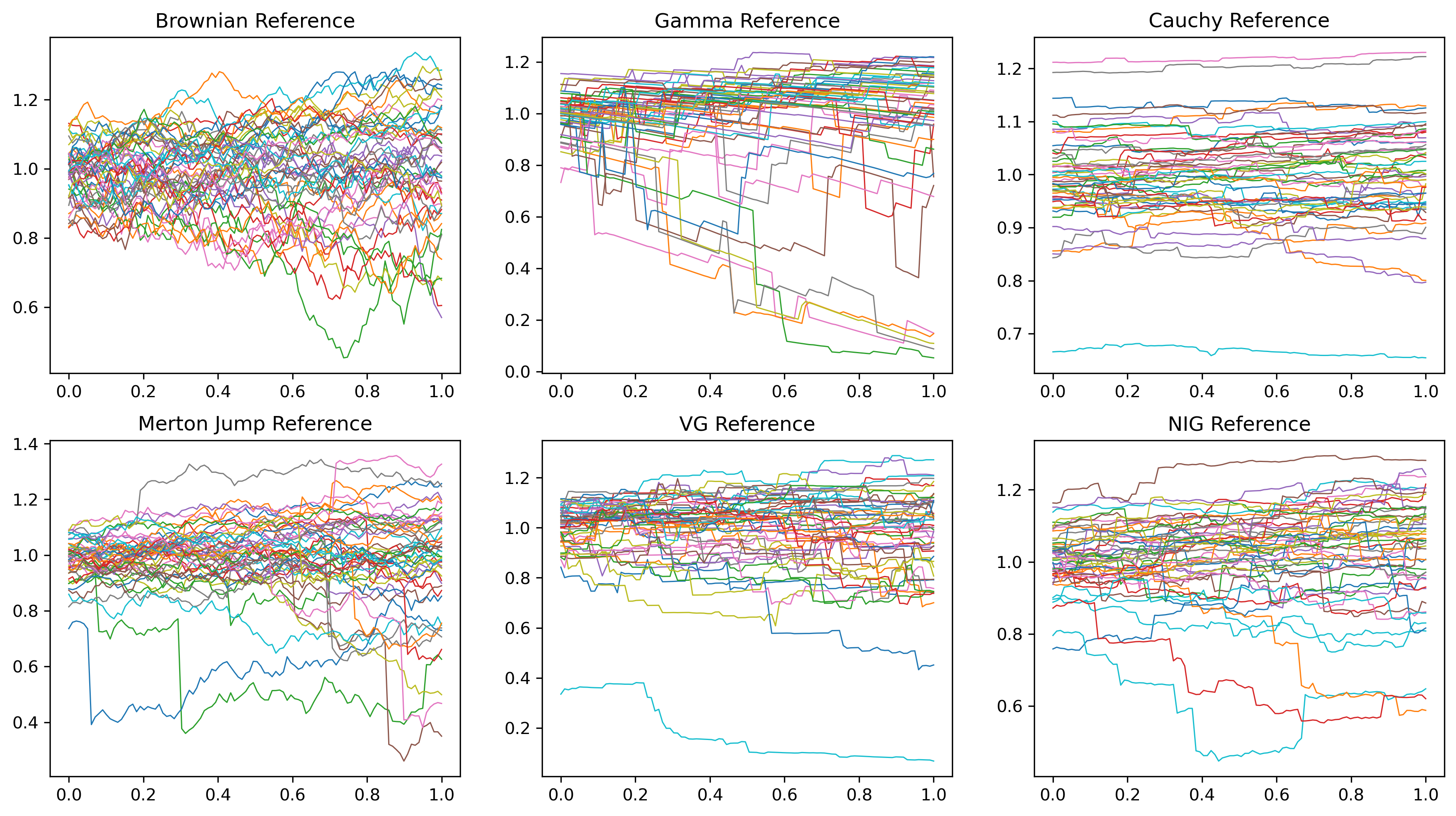}
    \caption{Illustration of the $X$-Bass martingales between the same prescribed initial and terminal distributions for different choices of the reference process $X$.}
    \label{fig:X-bass}
\end{figure}

To identify the dynamic optimization problem solved by an $X$-Bass martingale, we next recall the semimartingale structure of additive processes.

\begin{remark}[Canonical decomposition]\label{rem.c_dec}
Let $X$ be an additive process, and denote by $\Ncal$ its jump measure, by $J$ the compensator of $\Ncal$, and by
$\widetilde\Ncal:=\Ncal-J$ the compensated jump measure.
Fix the truncation function
$h(x):=x\mathbf 1_{\{|x|\leq1\}}$. Then there exist a deterministic càdlàg function $A$ of finite variation, with $A_0=0$, and a deterministic continuous increasing function $C$, with $C_0=0$, such
that
\[
X_t=X_0+A_t+X_t^c
+\int_{(0,t]\times\RR}h(x)\,\widetilde\Ncal(ds,dx)
+\int_{(0,t]\times\RR}(x-h(x))\,\Ncal(ds,dx),
\]
where $X^c$ is the continuous local martingale part of $X$ and satisfies
$\langle X^c\rangle_t=C_t$.
\end{remark}

The $X$-Bass construction includes an independent initial randomization. We therefore work with an initial enlargement of the natural filtration of $X$. Let $U$ be a uniform in $[0,1]$ random variable independent of $X$, and let $\mathbb G=(\mathcal G_t)_{t\in[0,1]}$ be the usual augmentation of
\[
\mathcal G_t:=\sigma(U)\vee\mathcal F_t^X,\qquad t\in[0,1].
\]
Since $U$ is independent of $\mathcal F_1^X$, the process $X$ has $\mathbb G$-independent increments and is therefore an additive process relative to $\mathbb G$.

Since an additive martingale has deterministic characteristics, its continuous martingale part has deterministic quadratic variation and its jump measure has deterministic compensator. Hence, \cite[Theorem~8 and Lemma~3(ii)]{Co13} yields the following martingale representation theorem.

\begin{proposition}
\label{prop:additive-martingale-representation}
Every $\mathbb G$-local martingale $M$ admits a representation of the form
\begin{equation}\label{eq.repGlm}
M_t=M_0+\int_0^tH_s\,dX_s^c+\int_{(0,t]\times\RR}W(s,z)\,\widetilde\Ncal(ds,dz),
\end{equation}
for suitable  $\mathbb G$-predictable integrands $H$ and $W$. Conversely, every process of this form is a $\mathbb G$-local martingale whenever the stochastic integrals are well defined.
\end{proposition}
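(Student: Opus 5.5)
The plan is to derive the statement from the general martingale representation results of \cite{Co13} for processes with independent increments, after checking that the enlarged filtration $\mathbb G$ fits that framework. First I will show that $X$ remains a process with independent increments relative to $\mathbb G$, and that its $\mathbb G$-characteristics coincide with its $\mathbb F^X$-characteristics. Since $U$ is independent of $\mathcal F_1^X$, for $s<t$ the increment $X_t-X_s$ is independent of $\sigma(U)\vee\mathcal F_s^X$. The usual augmentation does not destroy this, by right-continuity of paths and stochastic continuity. Consequently the characteristics $(A,C,J)$ of Remark~\ref{rem.c_dec} are deterministic and serve as the $\mathbb G$-characteristics. In particular $X^c$ is a continuous $\mathbb G$-martingale with deterministic bracket $C$, and $J$ is the $\mathbb G$-compensator of $\Ncal$.

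Second, I will invoke \cite[Theorem~8]{Co13}, which gives the weak (predictable) representation property for $\mathbb F^X$-local martingales with respect to $(X^c,\widetilde\Ncal)$ when the characteristics are deterministic. I will transfer it to $\mathbb G$ via \cite[Lemma~3(ii)]{Co13}, which I expect handles initial enlargements by an independent $\sigma$-field. Concretely, I first treat bounded martingales of the form $\EE[\xi\,g(U)\mid\mathcal G_t]$ with $\xi\in L^\infty(\mathcal F_1^X)$. Independence gives
\[
\EE[\xi\,g(U)\mid\mathcal G_t]=g(U)\,\EE[\xi\mid\mathcal F_t^X].
\]
Writing $\EE[\xi\mid\mathcal F_t^X]$ as a stochastic integral, I multiply the integrands by the $\mathcal G_0$-measurable factor $g(U)$; the result is still $\mathbb G$-predictable. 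A monotone class argument in $L^2$, using the closedness of the stable subspace generated by $X^c$ and $\widetilde\Ncal$, then covers all square-integrable $\mathbb G$-martingales. Localization extends this to all $\mathbb G$-local martingales, and I expect this step to be standard, citing Jacod--Shiryaev, Chapter III.

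The converse direction is immediate. Stochastic integrals with respect to the local martingale $X^c$, and with respect to $\widetilde\Ncal$ for predictable integrands in $G_{\mathrm{loc}}(\mu)$, are local martingales by construction.

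The main obstacle I expect is the passage from square-integrable martingales to general local martingales with possibly large jumps. There one needs the integrand $W$ to be in $G_{\mathrm{loc}}$ rather than merely locally square-integrable. I would handle this by relying on the characterization of the representation property through uniqueness of the martingale problem, or equivalently extremality, which is exactly what \cite{Co13} provides. With deterministic characteristics, the law of $X$ given $\mathcal G_0$ is the unique solution of that martingale problem, so the representation holds for all local martingales.
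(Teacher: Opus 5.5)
Your proposal is correct and follows essentially the same route as the paper, whose proof only observes that the characteristics of $X$ are deterministic (deterministic $\langle X^c\rangle$ and deterministic compensator $J$) and then invokes \cite[Theorem~8 and Lemma~3(ii)]{Co13}. Your additional steps make explicit what that citation is used for: checking that $X$ keeps independent increments and the same characteristics relative to $\mathbb G$, transferring the representation to square-integrable $\mathbb G$-martingales via $g(U)\,\EE[\xi\mid\mathcal F_t^X]$, and appealing to uniqueness/extremality of the martingale problem for local martingales with large jumps, where, as you rightly note, plain localization would not suffice.
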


From now on, we assume that $X$ is centered at every time and square integrable, namely, in terms of the canonical decomposition in Remark~\ref{rem.c_dec},
\begin{equation}\label{eq.Xsqint}
\EE[X_t]=0\; \;\forall\ t\in[0,1],\quad \text{and}\quad
C_1+\int_{(0,1]\times\RR}|z|^2\,J(ds,dz)<\infty.
\end{equation}
Then $X$ is a square-integrable additive martingale and
\[
X_t=X_t^c+\int_{(0,t]\times\RR}z\,\widetilde\Ncal(ds,dz),\qquad t\in[0,1].
\]

\subsection{Static-dynamic equivalence}
For $\mu,\nu\in\Prob_2(\RR)$, let $\mathfrak M_X(\mu,\nu)$ denote the class of square-integrable $\mathbb G$-martingales $M=(M_t)_{t\in[0,1]}$ such that $M_0\sim\mu$ and $M_1\sim\nu$. By Proposition~\ref{prop:additive-martingale-representation}, every $M\in\mathfrak M_X(\mu,\nu)$ admits representation \eqref{eq.repGlm}, for suitable square-integrable predictable integrands $H$ and $W$.
The martingale part of the reference process corresponds to the integrands $H_s=1$ and $W(s,z)=z$. This suggests measuring the deviation of an admissible martingale from the reference dynamics via
\[
\Ical_X(M):=\EE\left[\int_0^1|H_s-1|^2\,dC_s+\int_{(0,1]\times\RR}|W(s,z)-z|^2\,J(ds,dz)\right].
\]
Setting $q:=\Law(X_1)$, we define the dynamic counterpart of the static problem \eqref{def:WT^q_I} by
\begin{equation}
\label{eq:dyn-OT-1}
\inf_{M\in\mathfrak M_X(\mu,\nu)}\Ical_X(M).
\end{equation}
The two terms in $\Ical_X$ separately compare the continuous and jump components of the martingale dynamics with the corresponding components of the reference process. Thus, \eqref{eq:dyn-OT-1} seeks, among the admissible martingales with the prescribed marginals, those whose martingale dynamics are closest to those generated by $X$.

As in the static formulation, the corresponding correlation problem is obtained by expanding the quadratic cost. For $M\in\mathfrak M_X(\mu,\nu)$ with representation \eqref{eq.repGlm}, define
\[
\Scal_X(M):=\EE\left[\int_0^1H_s\,dC_s+\int_{(0,1]\times\RR}W(s,z)z\,J(ds,dz)\right].
\]
We will show that the dynamic counterpart of \eqref{def:WT^q_S} is
\begin{equation}
\label{eq:dyn-OT-2}
\sup_{M\in\mathfrak M_X(\mu,\nu)}\Scal_X(M).
\end{equation}
More precisely, the following proposition establishes the equivalence between the dynamic problems \eqref{eq:dyn-OT-1} and \eqref{eq:dyn-OT-2} and their static counterparts \eqref{def:WT^q_I} and \eqref{def:WT^q_S}, respectively, and shows that, whenever it exists, the $X$-Bass martingale is optimal for both formulations.

\begin{proposition}[Static--dynamic equivalence and optimality of the $X$-Bass martingale]
\label{prop:dynamic-optimality-X-Bass}
Let $\mu,\nu\in\Prob_2(\RR)$ satisfy $\mu\preceq_c\nu$, and $X=(X_t)_{t\in[0,1]}$ be an additive martingale satisfying \eqref{eq.Xsqint}.
Set $q:=\Law(X_1)$. Then
\[
\inf_{M\in\mathfrak M_X(\mu,\nu)}\Ical_X(M)=WT_I^q(\mu,\nu)-\int_\RR|x|^2\,\mu(dx)
\]
and
\[
\sup_{M\in\mathfrak M_X(\mu,\nu)}\Scal_X(M)=WT_S^q(\mu,\nu).
\]
More precisely, the endpoints coupling of every optimizer of either dynamic problem is an optimizer of the corresponding static problem, and every optimizer of either static problem admits a lift to an optimizer in $\mathfrak M_X(\mu,\nu)$. In particular, the dynamic problems \eqref{eq:dyn-OT-1} and \eqref{eq:dyn-OT-2} have the same optimizers.

Moreover, if there exists an $X$-Bass martingale $M^X\in\mathfrak M_X(\mu,\nu)$, then
\[
M^X\in\operatorname*{argmin}_{M\in\mathfrak M_X(\mu,\nu)}\Ical_X(M)\qquad\text{and}\qquad M^X\in\operatorname*{argmax}_{M\in\mathfrak M_X(\mu,\nu)}\Scal_X(M).
\]
\end{proposition}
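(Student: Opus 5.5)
The plan is to reduce the dynamic problems to the static ones through two inequalities: an upper bound obtained by conditioning, and a matching lower bound obtained by lifting static couplings. We start with the relation between $\Ical_X$ and $\Scal_X$. For $M\in\mathfrak M_X(\mu,\nu)$ with representation \eqref{eq.repGlm}, Itô's isometry gives
\[
\EE\bigl[(M_1-M_0)^2\bigr]=\EE\Bigl[\int_0^1H_s^2\,dC_s+\int|W|^2\,dJ\Bigr].
\]
Similarly, $\EE[X_1^2]=C_1+\int|z|^2\,dJ$. Expanding the squares in $\Ical_X$ then yields
\[
\Ical_X(M)=\EE[M_1^2]-\EE[M_0^2]+\EE[X_1^2]-2\Scal_X(M).
\]
The first three terms depend only on $\mu$, $\nu$ and $q$. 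The same expansion applied to $\Wcal_2^2(\pi_x,q)=\int y^2\pi_x(dy)+\int z^2q(dz)-2\MCov(\pi_x,q)$ shows that
\[
\Ical(\pi)=\int y^2\,d\nu+\int z^2\,dq-2\Scal(\pi).
\]
It therefore suffices to prove the identity for $\Scal_X$ together with the statements about optimizers; the $\Ical_X$ identity, including the shift by $\int|x|^2\,d\mu$, then follows by comparing constants.

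\textbf{Upper bound.} By the isometry (polarization), $\Scal_X(M)=\EE[(M_1-M_0)X_1]$, since $[X^c,X^c]=C$ and the compensated-jump covariation pairs $W$ with $z$ against $J$. Because $X_1$ is independent of $\mathcal G_0=\sigma(U)$ augmented, and $M_0$ is $\mathcal G_0$-measurable, we have $\EE[M_0X_1]=0$. Let $\pi=\Law(M_0,M_1)\in\Mart(\mu,\nu)$; conditioning on $\mathcal G_0$ shows it is a martingale coupling. Conditionally on $\mathcal G_0$, the law of $X_1$ is still $q$, while $M_1$ has some conditional law $\kappa_\omega$ whose average over $\{M_0=x\}$ is $\pi_x$. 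Hence
\[
\EE[M_1X_1\mid\mathcal G_0]\le\MCov(\kappa_\omega,q).
\]
Since $\MCov(\cdot,q)$ is concave, and in fact linear as a maximal correlation against a fixed measure is concave in mixtures, integrating over $M_0=x$ gives $\le\MCov(\pi_x,q)$. So $\Scal_X(M)\le\Scal(\pi)\le WT^q_\Scal$. I expect this concavity step to be the delicate point: one must check that mixing kernels can only decrease maximal covariance relative to the kernel of the mixture, i.e.\ $\int\MCov(\kappa,q)\le\MCov(\int\kappa,q)$. This is true because a coupling of the mixture with $q$ is obtained by mixing optimal couplings.

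\textbf{Lower bound and lifts.} Given $\pi\in\Mart(\mu,\nu)$, use $U$ to sample $M_0\sim\mu$ and an independent uniform $V$, both measurable with respect to $\sigma(U)$. Set
\[
M_1:=Q_{\pi_{M_0}}\bigl(F_q(X_1)\bigr),
\]
which is the comonotone coupling of $\pi_{M_0}$ with $X_1$, well defined since $q\ll\lambda$ makes $F_q(X_1)$ uniform. Then define $M_t:=\EE[M_1\mid\mathcal G_t]$. This is a square-integrable $\mathbb G$-martingale with $M_0$ as starting value, because $\EE[M_1\mid\mathcal G_0]=\mathrm{mean}(\pi_{M_0})=M_0$. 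It satisfies $\Scal_X(M)=\EE[M_1X_1]=\int\MCov(\pi_x,q)\,\mu(dx)=\Scal(\pi)$. Taking the supremum gives equality of values. The lift maps static optimizers to dynamic optimizers, and the upper-bound chain shows that the endpoint coupling of a dynamic optimizer is a static optimizer. Since optimality for $\Ical_X$ and for $\Scal_X$ differ only by constants, the two dynamic problems have the same optimizers.

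\textbf{Optimality of $M^X$.} We have $M^X_1=T(A+X_1)$ with $T$ non-decreasing and $A$ being $\mathcal G_0$-measurable. Conditionally on $\mathcal G_0$, the variable $M_1^X$ is therefore a non-decreasing function of $X_1$, so the conditional inequality above is an equality. Moreover $M_0^X=(q\star T)(A)$ is injective in $A$ on an $\alpha$-full set, because $q\star T$ is increasing there. Hence conditioning on $\mathcal G_0$ is the same as conditioning on $M_0^X$, and the mixture inequality is an equality as well. This gives $\Scal_X(M^X)=\Scal(\Law(M_0^X,M_1^X))=\int\MCov(\pi_x,q)\,\mu(dx)$. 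Combined with $\Scal\le WT^q_\Scal$, this would already suffice if $\Law(M_0^X,M_1^X)$ is a static optimizer, which is Tschiderer's result \cite{Ts24} for $q$-Bass couplings with $\Prob_2$ marginals. This shows $M^X$ attains the supremum, and hence also the infimum.
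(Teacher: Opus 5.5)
Your route is the paper's: bound $\Scal_X(M)=\EE[M_1X_1]$ (and hence $\Ical_X$, which equals a marginal-dependent constant minus $2\Scal_X$) by the static value of $\pi=\Law(M_0,M_1)$, using that $X_1$ is independent of $\mathcal G_0$. You lift static couplings via $M_t=\EE[M_1\mid\mathcal G_t]$, and you use that $M^X_1=T(A+X_1)$ is conditionally comonotone with $X_1$. You also usefully make explicit that $\Law(M^X_0,M^X_1)$ must be a static optimizer, which the paper's proof uses only implicitly.

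\textbf{The gap is in the lift.} The construction $M_1:=Q_{\pi_{M_0}}(F_q(X_1))$ relies on $q\ll\lambda$, which the proposition does not assume; the section explicitly has atomic reference laws such as compound Poisson in mind. If $q$ has atoms, $F_q(X_1)$ is not uniform, so conditionally on $M_0=x$ the law of $M_1$ is not $\pi_x$. You then lose $M_1\sim\nu$, the identity $\EE[M_1\mid\mathcal G_0]=M_0$, and $\EE[M_1X_1\mid M_0=x]=\MCov(\pi_x,q)$, so the lifted process need not lie in $\mathfrak M_X(\mu,\nu)$.

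The fix uses the uniform $V$ you introduced but never used. Set $W:=F_q(X_1-)+V\,q(\{X_1\})$; it is uniform, independent of $M_0$, and satisfies $X_1=Q_q(W)$ a.s. Then $M_1:=Q_{\pi_{M_0}}(W)$ is $\mathcal G_1$-measurable and conditionally realizes the comonotone coupling of $\pi_{M_0}$ and $q$. The paper instead realizes an arbitrary optimal $\kappa_x\in\Pi(\pi_x,q)$ through the randomization in $\mathcal G_0$.

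The same caveat touches your final citation. Tschiderer's result, as quoted in the paper, is stated for $q$-Bass couplings, whose definition presupposes $q\ll\lambda$. So for atomic $q$ the static optimality of $\Law(M^X_0,M^X_1)$ needs its own justification; the paper is silent on this as well.

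Two minor points. First, your upper bound via $\mathcal G_0$-conditioning plus concavity of $\MCov(\cdot,q)$ under mixtures is correct, but drop ``in fact linear'': the map is concave, not linear. It is also a detour. Since $M_0$ is $\mathcal G_0$-measurable, $X_1$ is independent of $M_0$, and conditionally on $M_0=x$ the pair $(M_1,X_1)$ is already a coupling of $\pi_x$ and $q$; this is how the paper argues. Second, in your last paragraph, $\mathcal G_0$ may be strictly larger than $\sigma(M^X_0)$, so conditioning on the two is not the same. What you need, and what holds, is that the $\mathcal G_0$-conditional law of $(M^X_1,X_1)$ depends on $\omega$ only through $A$, hence through $M^X_0$.
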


\begin{proof}
Let $M\in\mathfrak M_X(\mu,\nu)$, set $\pi:=\Law(M_0,M_1)$, and let $(H,W)$ be its representation integrands in \eqref{eq.repGlm}. Since $X$ is a square-integrable additive martingale,
\[
X_t=X_t^c+\int_{(0,t]\times\RR}z\,\widetilde\Ncal(ds,dz).
\]
The It\^o isometry and the orthogonality of the continuous and purely discontinuous martingale components yield
\[
\Ical_X(M)=\EE\left[\left|(M_1-M_0)-X_1\right|^2\right].
\]
Since $M$ is a martingale and $M_0$ is $\mathcal G_0$-measurable, while $X$ is independent of $\mathcal G_0$, we have
\[
\EE[M_0(M_1-M_0)]=0,\qquad \EE[M_0X_1]=0.
\]
Consequently,
\[
\Ical_X(M)=\EE\left[|M_1-X_1|^2\right]-\int_\RR|x|^2\,\mu(dx).
\]
Similarly,
\[
\Scal_X(M)=\EE\left[(M_1-M_0)X_1\right]=\EE[M_1X_1].
\]
Conditionally on $M_0=x$, the joint law of $(M_1,X_1)$ is a coupling of $\pi_x$ and $q$. Therefore,
\[
\Ical_X(M)+\int_\RR|x|^2\,\mu(dx)=\int_\RR\EE\left[|M_1-X_1|^2\mid M_0=x\right]\mu(dx)\geq\int_\RR\Wcal_2^2(\pi_x,q)\,\mu(dx)=\Ical(\pi),
\]
and
\[
\Scal_X(M)\leq\int_\RR\sup_{\kappa\in\Pi(\pi_x,q)}\int_{\RR^2}yz\,\kappa(dy,dz)\,\mu(dx)=\Scal(\pi).
\]
Taking the infimum and the supremum gives one direction of the claimed identities.

Conversely, let $\pi\in\Mart(\mu,\nu)$. For $\mu$-a.e.\ $x$, choose an optimal coupling $\kappa_x\in\Pi(\pi_x,q)$ for the quadratic transport cost, equivalently for the covariance maximization problem. Using the independent randomization contained in $\mathcal G_0$, we may construct random variables $M_0$ and $M_1$ such that $M_0\sim\mu$ and
\[
\Law(M_1,X_1\mid M_0=x)=\kappa_x
\]
for $\mu$-a.e.\ $x$. Since the first marginal of $\kappa_x$ is $\pi_x$, and $\pi$ is a martingale coupling,
\[
\EE[M_1\mid M_0]=M_0.
\]
Define
\[
M_t:=\EE[M_1\mid\mathcal G_t],\qquad t\in[0,1].
\]
Then $M\in\mathfrak M_X(\mu,\nu)$, and Proposition~\ref{prop:additive-martingale-representation} gives its representation in terms of $X^c$ and $\widetilde\Ncal$. By construction, the preceding inequalities are equalities. Taking $\pi$ to be optimal proves both value identities and the correspondence between static and dynamic optimizers.

Finally, suppose that $M^X$ is an $X$-Bass martingale generated by $(\alpha,T,A)$. Then
\[
M_0^X=(q\star T)(A),\qquad M_1^X=T(A+X_1).
\]
Since $q\star T$ is increasing on a set of full $\alpha$-measure, $A$ is, up to null sets, a measurable function of $M_0^X$. Hence, conditionally on $M_0^X$, the coupling of $M_1^X$ and $X_1$ is induced by the non-decreasing map $z\mapsto T(A+z)$. It is therefore the monotone optimal coupling between the conditional law of $M_1^X$ and $q$. Thus equality holds in both of the preceding bounds, and $M^X$ is optimal for \eqref{eq:dyn-OT-1} and \eqref{eq:dyn-OT-2}.
\end{proof}

\begin{remark}[Reference processes with atoms]
The definition of an $X$-Bass martingale does not intrinsically require $q=\Law(X_1)$ to be absolutely continuous. Absolute continuity ensures that $\alpha\ast q$ is atomless and that the monotone terminal transport may be written as
\[
T=Q_\nu\circ F_{\alpha\ast q}.
\]
When $\alpha\ast q$ has atoms, a deterministic non-decreasing map $T$ satisfying $T_\#(\alpha\ast q)=\nu$ need not exist. The construction nevertheless remains possible whenever the atomic structures of $\alpha\ast q$ and $\nu$ are compatible with such a transport.

An example arises in the dynamic reinsurance problem studied in \cite{AcGaMaPa26}, where the reference process is a compound Poisson claims process. Its terminal distribution has an atom corresponding to the event that no claim occurs. When the prescribed terminal distribution contains a compatible atom, the terminal transport map can still be constructed, and hence so can the corresponding $X$-Bass martingale. In that setting, martingale optimal transport is used to define dynamic reinsurance strategies subject to terminal-distribution, moment, or risk-based constraints.
\end{remark}

\begin{remark}[On a geometric extension]
The geometric martingale Benamou--Brenier problems studied in \cite{BaLoOb25,BePaRi25} identify a positive martingale whose stochastic logarithm is closest to Brownian motion. Their connection with the classical martingale Benamou--Brenier problem is obtained through a change of num\'eraire and Girsanov's theorem.
A crucial ingredient is the rigidity of Brownian motion under an equivalent change of measure. The quadratic variation of a continuous semimartingale is invariant under such a change, and L\'evy's characterization identifies a continuous local martingale with quadratic variation $t$ as Brownian motion under the new measure.
The same argument does not extend directly to additive processes with jumps. Under an equivalent change of measure, Girsanov's theorem generally modifies the compensator of the jump measure and therefore changes the jump characteristics of the reference process. The jump component of a L\'evy process is thus not preserved with the same rigidity as the quadratic variation of Brownian motion. Consequently, the change-of-num\'eraire technique  does not directly produce an analogous geometric formulation for exponential L\'evy processes.
\end{remark}

\subsection{The $X$-Bass Martingale as Adapted Wasserstein Projection of the Reference Process $X$}
\label{sec:aw-projection}
We finally show that the $X$-Bass martingale can be characterized as the adapted Wasserstein projection of the additive process $X$ onto the class of square-integrable $\mathbb G$-martingales with marginals $\mu$ and $\nu$.
Adapted Wasserstein distances provide a variant of the classical Wasserstein distances by restricting the set of admissible couplings to those that preserve the underlying flow of information. Originating from the nested distance in discrete-time stochastic optimization \cite{PfPi12} and the subsequent development of causal and bicausal optimal transport \cite{La18, BaBeLiZa17}, this framework has also proved useful in continuous time \cite{AcBaZa20,BaBaBeEd19a}, in particular for stability and projection problems for stochastic processes \cite{BaBeHuKa20, BaKaRo22}.
To formulate the projection problem relevant here, we use the path-space approach of \cite[Section~6]{BaBeHuKa20}, adapted to processes that may have jumps. Therefore, we take as canonical space the Skorokhod space
\[
\Omega:=D([0,1],\RR).
\]
The idea is to view the law of the reference additive process $X$ as a probability measure on $\Omega$ and to seek, among all martingale laws with prescribed initial and terminal marginals, the one that is closest to it in an adapted analogue of the $2$-Wasserstein distance. To define this distance, we first recall the notion of a bicausal coupling in this context.
Here we denote by $(\omega,\bar\omega)$ a general element in $\Omega\times\Omega$.

\begin{definition}[Causal and bicausal coupling]
A probability measure $\gamma$ on $\Omega\times\Omega$ is called a \emph{causal} coupling between $\PP_1$ and $\PP_2$ if it has first marginal $\PP_1$, second marginal $\PP_2$, and satisfies
\[
\forall\ t\in[0,1], \forall\ A\in\Fcal_t,\qquad \omega \mapsto \gamma_\omega(A) \quad \text{ is $\Fcal_t$-measurable},
\]
where $(\Fcal_t)_{t\in[0,1]}$ is the $\PP_1$-completed canonical filtration,
and
$\gamma_\omega$ a  regular conditional distribution of $\gamma$ w.r.t. the first marginal. We denote by $\Pi_c(\PP_1,\PP_2)$  the set of causal couplings. 
If, in addition, $e_\#\gamma \in \Pi_c(\PP_2,\PP_1)$, where $e(\omega,\bar\omega):=(\bar\omega,\omega)$, then $\gamma$ is called \emph{bicausal}. We denote the set of such couplings by $\Pi_{bc}(\PP_1,\PP_2)$.
\end{definition}
Intuitively, causality means that, at any time $t$, the evolution of the second coordinate up to time $t$ may depend on the evolution of the first coordinate only up to time $t$, and not through its future trajectory. Bicausality imposes the same non-anticipation property in the other direction as well.
We can now define the adapted $2$-Wasserstein distance on the class of square-integrable distributions on $\Omega$, as an optimal transport problem restricted to bicausal couplings. 
For $\PP_1,\PP_2\in\Prob_2(\Omega)$, we set 
\begin{equation}\label{eq.AW}
\Acal\Wcal_2(\PP_1,\PP_2) :=
\inf_{\gamma\in\Pi_{bc}(\PP_1,\PP_2)}
\left(\EE^\gamma\left[[\omega-\bar\omega]_1\right]\right)^{\frac12},
\end{equation}
where $[\hspace*{0.03cm}\cdot\hspace*{0.03cm}]_1$ denotes the pathwise quadratic variation at time $1$.
Note that, if $\PP_1,\PP_2$ are martingale laws, then, under any bicausal coupling $\gamma\in\Pi_{bc}(\PP_1,\PP_2)$, the canonical
process on $\Omega\times\Omega$ is a $\gamma$-martingale in its own filtration, and so is the difference of the two coordinates; 
see \cite[Lemma~6.2]{BaBeHuKa20}.

Denote by $\Mart_D(\mu,\nu)$ the set of probability measures on $\Omega$ under which the canonical process is a square-integrable martingale with initial law $\mu$ and terminal law $\nu$, and set $\PP^X:=\Law(X)$. We consider the projection problem  
\begin{equation}
\label{eq:aw-projection}
    \inf_{\PP^M\in\Mart_D(\mu,\nu)} \Acal\Wcal_2(\PP^X,\PP^M),
\end{equation}
and show that the law of an $X$-Bass martingale provides one solution to it. Specifically, in analogy to \cite[Proposition~6.4]{BaBeHuKa20},
we obtain the following result, that shows the relationship between \eqref{eq:aw-projection} and \eqref{eq:dyn-OT-1}.

\begin{proposition}
Let $\mu, \nu \in \Prob_2(\RR)$ satisfy $\mu \preceq_c \nu$, $X$ be a square-integrable additive martingale, and set $q:= \Law(X_1)$. Then
\begin{equation}\label{eq.AWineq}
\inf_{\PP^M \in \Mart_D(\mu,\nu)} \Acal\Wcal_2^2(\PP^X,\PP^M) \geq \inf_{M\in\mathfrak M_X(\mu,\nu)}\Ical_X(M).
\end{equation}
Moreover, if there exists an $X$-Bass martingale $M^*$ such that the coupling $\Law(X,M^*)$ is bicausal, then equality holds in \eqref{eq.AWineq}, and $\Law(M^*)$ is an optimizer of the problem on the LHS.
\end{proposition}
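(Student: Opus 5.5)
The plan follows the path-space argument of \cite[Proposition~6.4]{BaBeHuKa20}: turn every bicausal coupling of $\PP^X$ with a martingale law $\PP^M\in\Mart_D(\mu,\nu)$ into an admissible element of $\mathfrak M_X(\mu,\nu)$ whose cost $\Ical_X$ is at most the transport cost. Fix $\PP^M\in\Mart_D(\mu,\nu)$ and $\gamma\in\Pi_{bc}(\PP^X,\PP^M)$ with finite cost, and write $(\omega,\bar\omega)$ for the two coordinates. By \cite[Lemma~6.2]{BaBeHuKa20}, under $\gamma$ both coordinate processes and their difference are square-integrable martingales in the joint filtration $\mathbb H$. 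Consequently
\[
\EE^\gamma\bigl[[\omega-\bar\omega]_1\bigr]=\EE^\gamma\bigl[|(\bar\omega_1-\bar\omega_0)-(\omega_1-\omega_0)|^2\bigr]=\EE^\gamma\bigl[|\bar\omega_1-\omega_1|^2\bigr]-\int_\RR|x|^2\,\mu(dx).
\]
Here $\omega_0=0$ because $X_0=0$, and the cross term $\EE^\gamma[\bar\omega_0(\bar\omega_1-\bar\omega_0)]$ vanishes by the martingale property.

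Next comes the comparison with the dynamic problem. In the proof of Proposition~\ref{prop:dynamic-optimality-X-Bass} it was shown that for $M\in\mathfrak M_X(\mu,\nu)$,
\[
\Ical_X(M)=\EE\bigl[|M_1-X_1|^2\bigr]-\int_\RR|x|^2\,\mu(dx),
\]
and that $\inf\Ical_X$ equals $\int\mu(dx)\Wcal_2^2(\pi_x,q)-\int|x|^2d\mu$ minimized over $\pi\in\Mart(\mu,\nu)$. It therefore suffices to show
\[
\EE^\gamma\bigl[|\bar\omega_1-\omega_1|^2\bigr]\ge\int_\RR\Wcal_2^2(\pi_x,q)\,\mu(dx)
\]
for the martingale coupling $\pi:=\Law^\gamma(\bar\omega_0,\bar\omega_1)\in\Mart(\mu,\nu)$. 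Conditionally on $\bar\omega_0=x$, the pair $(\bar\omega_1,\omega_1)$ has first marginal $\pi_x$. The \textbf{main point} is that $\omega_1$ still has law $q$ conditionally on $\bar\omega_0$, i.e.\ $\omega$ is independent of $\bar\omega_0$. This is where bicausality is used. Causality of $e_\#\gamma$ from $\PP^M$ to $\PP^X$ gives that $\omega$ is conditionally independent of the future of $\bar\omega$ given its past. Causality from $\PP^X$ to $\PP^M$ at time $0$ gives that $\gamma_\omega(\bar\omega_0\in B)$ is $\Fcal_0$-measurable, hence a.s.\ constant, since $\Fcal_0$ is $\PP^X$-trivial because $X_0=0$ and the completion is used. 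Thus $\bar\omega_0$ is independent of $\omega$, and the conditional law of $\omega_1$ given $\bar\omega_0=x$ is $q$. Integrating $\EE[|\bar\omega_1-\omega_1|^2\mid\bar\omega_0=x]\ge\Wcal_2^2(\pi_x,q)$ and taking the infimum over $\gamma$ and $\PP^M$ yields \eqref{eq.AWineq}. The value identity in Proposition~\ref{prop:dynamic-optimality-X-Bass} is used here so that we never need to represent $\bar\omega$ in the filtration $\mathbb G$.

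For the equality case, let $M^*$ be an $X$-Bass martingale with $\gamma^*:=\Law(X,M^*)$ bicausal. Then $\gamma^*\in\Pi_{bc}(\PP^X,\Law(M^*))$ and $\Law(M^*)\in\Mart_D(\mu,\nu)$. Since $M^*-X$ is a square-integrable $\mathbb G$-martingale, with $M^*\in\mathfrak M_X(\mu,\nu)$ adapted to $\mathbb G$, its quadratic variation satisfies
\[
\EE\bigl[[X-M^*]_1\bigr]=\EE\bigl[|(M^*_1-M^*_0)-X_1|^2\bigr]=\Ical_X(M^*),
\]
where the last equality is the It\^o isometry computation in the proof of Proposition~\ref{prop:dynamic-optimality-X-Bass}. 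By that proposition, $M^*$ minimizes $\Ical_X$. Hence $\Acal\Wcal_2^2(\PP^X,\Law(M^*))\le\inf\Ical_X$, which combined with \eqref{eq.AWineq} gives equality and optimality of $\Law(M^*)$.

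I expect the delicate step to be the independence of $\bar\omega_0$ from $\omega$ under bicausal couplings, together with the fact that $\EE^\gamma[[\omega-\bar\omega]_1]$ equals the squared terminal increment. The latter requires the martingale property of $\omega-\bar\omega$ in the joint filtration, which is exactly what \cite[Lemma~6.2]{BaBeHuKa20} provides. Its adaptation to càdlàg paths on $D([0,1],\RR)$ should go through verbatim, since it only uses the causality conditions and not continuity of paths.
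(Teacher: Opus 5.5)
Your proposal is correct and follows essentially the same route as the paper. You reduce $\EE^\gamma[[\omega-\bar\omega]_1]$ to the terminal $L^2$ cost via the joint martingale property from \cite[Lemma~6.2]{BaBeHuKa20}, and use causality at time $0$ (triviality of $\Fcal_0$ since $\omega_0=0$) to get independence of $\bar\omega_0$ and $\omega$. You then bound the cost conditionally by $\Wcal_2^2(\pi_x,q)$, invoke the value identity of Proposition~\ref{prop:dynamic-optimality-X-Bass}, and settle the equality case through $\EE[[X-M^*]_1]=\Ical_X(M^*)$.

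One small correction: the independence step uses only causality from $\PP^X$ to $\PP^M$, not the reverse causality you mention. Bicausality is genuinely needed only for the martingale property of $\omega-\bar\omega$ in the joint filtration.
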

\begin{proof}
Fix $\PP^M \in \Mart_D(\mu,\nu)$ and $\gamma \in \Pi_{bc}(\PP^X,\PP^M)$. From the comment after \eqref{eq.AW}, $((\omega,\bar\omega)\mapsto\omega-\bar\omega)_\#\gamma$ is a martingale law. Therefore, since $\omega_0=0$,
\[
\EE^\gamma\left[[\omega-\bar\omega]_1\right]
= \EE^\gamma\left[|\omega_1-\bar\omega_1|^2\right]-\int_\RR |x|^2\,\mu(dx).
\]
Now let $\pi:=\Law^\gamma(\bar\omega_0,\bar\omega_1)\in\Mart(\mu,\nu)$ and disintegrate $\pi(dx,dy)=\mu(dx)\pi_x(dy)$. By causality and the fact that $\omega_0=0$, $\bar\omega_0$ is independent of $\omega$, so conditionally on $\bar\omega_0=x$ the law of $(\bar\omega_1,\omega_1)$ is a coupling of $\pi_x$ and $q$. Therefore,
\[
\EE^\gamma\left[[\omega-\bar\omega]_1\right]
\geq \int_\RR\Wcal_2^2(\pi_x,q)\,\mu(dx)-\int_\RR|x|^2\,\mu(dx)
\geq \inf_{M\in\mathfrak M_X(\mu,\nu)}\Ical_X(M),
\]
where the last inequality follows from  \eqref{def:WT^q_I} and Proposition~\ref{prop:dynamic-optimality-X-Bass}. Taking the infimum over $\gamma\in\Pi_{bc}(\PP^X,\PP^M)$ proves the first claim.

Now suppose that an $X$-Bass martingale $M^*$ exists and that $\gamma^X=\Law(X,M^*)$ is bicausal. By Proposition~\ref{prop:dynamic-optimality-X-Bass}, $M^*$ minimizes $\Ical_X$, while by the definition of $\Ical_X$, $EE^{\gamma^X}\left[[X-M^*]_1\right]=\Ical_X(M^*)$. Hence
\[
\Acal\Wcal_2^2(\PP^X,\Law(M^*))\leq \Ical_X(M^*)
=\inf_{M\in\mathfrak M_X(\mu,\nu)}\Ical_X(M).
\]
Together with the first part, this yields equality and proves that $\Law(M^*)$ is an adapted Wasserstein projection of $\PP^X$ onto $\Mart_D(\mu,\nu)$.
\end{proof}
Note that, if $M^*$ is an $X$-Bass martingale from $\mu$ to $\nu$, the coupling $\gamma^X:=\Law(X,M^*)$
need not be bicausal. Indeed, casuality is immediate in the direction from $X$ to $M^*$, whereas the reverse direction requires additional care. If, for some $t\in(0,1)$, the map $f_t$ in the representation
$M_t^*=f_t(A+X_t)$
fails to be injective $\Law(X_t)$-a.e., then 
the argument used to establish reverse causality in the Brownian setting in \cite{BaBeHuKa20} breaks down. This issue can be avoided under conditions ensuring the injectivity of $f_t$ for every $t<1$, for instance when the increment laws of $X$ are equivalent to Lebesgue measure on $\RR$, as it is in the Brownian setting. Under this assumption, the bicausality of $\gamma^X$ can be established by the same argument as in \cite[Lemma~6.3]{BaBeHuKa20}.

\appendix
\section{Appendix}

\begin{proposition}[Approximation of the initial marginal]
\label{prop:approx_mu} 
Let $\mu, \nu \in \Prob_1(\RR)$ be such that  $(\mu,\nu)$ is irreducible. Then there exists a sequence $(\mu_k)_{k \in \NN}\subseteq \Prob_1(\RR)$ such that, for every $k \in \NN$,
\begin{enumerate}[label=(\roman*)]
	\item $\mu_k$ is $k$-atomic with equal weights;
	\item $(\mu_k,\nu)$ is irreducible;
	\item $U_{\mu_k} \to U_\mu$ pointwise on $(0,1)$.
\end{enumerate}
\end{proposition}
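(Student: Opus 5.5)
The construction is the one already used in the proof of Theorem~\ref{thm:general-existence-convex-param}: take $U_{\mu_k}$ to be the piecewise affine interpolation of $U_\mu$ on the grid $\{i/k:i=0,\dots,k\}$. Concretely, set
\[
x_i^{(k)}:=k\bigl(U_\mu(i/k)-U_\mu((i-1)/k)\bigr)=k\int_{(i-1)/k}^{i/k}Q_\mu(u)\,du,\qquad i=1,\dots,k,
\]
and $\mu_k:=\frac1k\sum_{i=1}^k\delta_{x_i^{(k)}}$. Since $Q_\mu$ is non-decreasing, the averages $x_i^{(k)}$ are non-decreasing in $i$. Hence $Q_{\mu_k}$ equals $x_i^{(k)}$ on $((i-1)/k,i/k]$, and $U_{\mu_k}$ is exactly the chord interpolation of $U_\mu$ at the grid points. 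When some consecutive averages coincide, $\mu_k$ has fewer distinct atoms. In that case ``$k$-atomic with equal weights'' should be read with repetitions allowed, as a sum of $k$ Dirac masses of weight $1/k$, which is the form used in Remark~\ref{rmk:q-bass-operator-gen}. If distinct atoms are genuinely required, a small perturbation argument handles it; I return to this at the end. Finiteness of $x_i^{(k)}$ follows from $\mu\in\Prob_1(\RR)$.

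For (iii), fix $p\in(0,1)$ and let $i$ satisfy $p\in[(i-1)/k,i/k]$. Then $U_{\mu_k}(p)$ is a convex combination of $U_\mu((i-1)/k)$ and $U_\mu(i/k)$. Since $U_\mu$ is continuous on $[0,1]$ (indeed absolutely continuous), it is uniformly continuous there. Therefore $|U_{\mu_k}(p)-U_\mu(p)|\le \sup_{|s-t|\le 1/k}|U_\mu(s)-U_\mu(t)|\to0$, which gives uniform, and hence pointwise, convergence.

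For (ii), I use the characterization recalled after Definition~\ref{def:integ_quant}. The endpoints match, $U_{\mu_k}(0)=0$ and $U_{\mu_k}(1)=U_\mu(1)=U_\nu(1)$. It then suffices to show $U_{\mu_k}>U_\nu$ strictly on $(0,1)$.
\begin{itemize}
\item At the grid points $i/k$ with $0<i<k$, we have $U_{\mu_k}(i/k)=U_\mu(i/k)>U_\nu(i/k)$ by irreducibility of $(\mu,\nu)$.
\item On each segment $[(i-1)/k,i/k]$, $U_{\mu_k}$ is affine and $U_\nu$ is convex. Hence $U_{\mu_k}-U_\nu$ is concave there, so its minimum on the segment is attained at an endpoint.
\item At the endpoints the difference is either strictly positive (interior grid points) or zero (at $0$ or $1$).
\item On interior segments, concavity with both endpoint values positive gives strict positivity.
\item On the boundary segments $[0,1/k]$ and $[1-1/k,1]$, concavity with values $0$ and $>0$ at the endpoints gives strict positivity on the open part, since a concave function lies above its chord.
\end{itemize}
This proves irreducibility, and in particular $\mu_k\preceq_c\nu$.

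The main obstacle is the atom-count subtlety. If strictly $k$ distinct atoms are demanded, I would perturb the $x_i^{(k)}$ slightly to be strictly increasing while preserving their sum, for example by adding $\varepsilon_k(i-\frac{k+1}{2})$. The strict inequality $U_{\mu_k}>U_\nu$ at the interior grid points has a positive margin, and the boundary behaviour is controlled because the slopes at the endpoints change only by $O(\varepsilon_k)$. One still needs $U_{\mu_k}-U_\nu>0$ near $0$. This holds because the perturbed first slope satisfies $x_1^{(k)}-\varepsilon_k(k-1)/2<x_1^{(k)}$, so the chord starts no lower, relative to $U_\nu$, than the unperturbed one near $0$; symmetrically, near $1$ the last slope increases. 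Taking $\varepsilon_k\to0$ then preserves (iii).
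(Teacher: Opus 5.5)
Your proof is correct and uses the same construction as the paper, namely the chord interpolation of $U_\mu$ on the grid $\{i/k\}$; the only difference is that you verify the atom structure and the irreducibility by hand, where the paper cites \cite[Proposition~1.8]{AcMa26_semidiscrete}. Your caveat on repeated atoms is also well founded: for large $k$ the paper's $\mu_k$ has repeated atoms as soon as $\mu$ has an atom, and Step~1 of the proof of Theorem~\ref{thm:general-existence-convex-param} actually relies on this.

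Two small touch-ups are needed. First, your endpoint argument for (ii) says nothing when $k=1$, since both endpoint values vanish; this case is covered at once by noting that the chord interpolant of the convex $U_\mu$ dominates it, so $U_{\mu_k}\ge U_\mu>U_\nu$ on $(0,1)$ for every $k$. Second, in the optional perturbation your slope comparison is backwards: the perturbation lowers the chain, by $\varepsilon_k j(k-j)/(2k)$ at $j/k$. Your concavity argument still applies once $\varepsilon_k$ is small enough to keep the interior grid values strictly above $U_\nu$, and (iii) then requires $k\varepsilon_k\to0$ rather than merely $\varepsilon_k\to0$.
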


\begin{proof}
Fix $k \in \NN$, and consider the polygonal chain $\Ccal^{(k)}$ whose vertices are
\[
\left(\frac{i}{k},\, U_\mu\left(\frac{i}{k}\right)\right),
\qquad i=0,\dots,k.
\]
Since $U_\mu$ is convex, the polygonal chain $\Ccal^{(k)}$ is convex. Therefore, by \cite[Proposition 1.8]{AcMa26_semidiscrete}, $\Ccal^{(k)}$ is the graph of the integrated quantile function of a $k$-atomic probability measure $\mu_k$ with equal weights. The same proposition also implies that
 $(\mu_k,\nu)$ is irreducible.
By construction, the function $U_{\mu_k}$ coincides with the piecewise affine interpolation of $U_\mu$ on the grid points $i/k$, $i=0,\dots,k$. Since integrated quantile functions are continuous, it follows that
\[
U_{\mu_k}(p)\to U_\mu(p)
\qquad \text{for every } p\in (0,1).
\]
\end{proof}

\begin{corollary}[Approximation of the terminal marginal]\label{cor:approx_nu_2}
Let $\nu\in \Prob_1(\RR)$. Then there exists a sequence $(\nu_k)_{k \in \NN}\subseteq \Prob_1(\RR)$ such that, for all $k \in \NN$,
\begin{enumerate}[label=(\roman*)]
\item\label{cor:approx_nu_2:ac} $\nu_k \ll \lambda$ and $\supp(\nu_k)$ is a bounded interval;
\item\label{cor:approx_nu_2:lb} $\rho_{\nu_k}$ is bounded away from $0$ on $\supp(\nu_k)$;
\item\label{cor:approx_nu_2:pt} $Q_{\nu_k}(p)\to Q_\nu(p)$ pointwise on $(0,1)$;
\item\label{cor:approx_nu_2:dom} there exists $Q\in L^1(0,1)$ such that $|Q_{\nu_k}(p)|\le Q(p)$, for all $p\in(0,1)$.
\end{enumerate}
\end{corollary}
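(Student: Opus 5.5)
The construction I would use is mollification of the quantile function, followed by a quantitative control of how much the new quantile function moves away from $Q_\nu$. It is most natural on the quantile side. First I truncate: set $\widetilde Q_k:=(Q_\nu\vee(-k))\wedge k$. This is a non-decreasing, left-continuous function that converges pointwise to $Q_\nu$ and satisfies $|\widetilde Q_k|\le|Q_\nu|$. The corresponding law is compactly supported, but it may have atoms and gaps.

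To remove atoms and gaps, I add a small strictly increasing perturbation:
\[
\widehat Q_k(p):=\widetilde Q_k(p)+\tfrac1k\,(2p-1).
\]
Because its derivative is at least $2/k>0$, this function is strictly increasing. Its jumps, however, still produce gaps in the support. To close them I average over a window:
\[
Q_{\nu_k}(p):=\int_{-1}^{1}\widehat Q_k\bigl(\pi_k(p+s/k)\bigr)\,\tfrac12\,ds,
\]
where $\pi_k$ clamps its argument into $[0,1]$. Alternatively, one can linearly interpolate $\widehat Q_k$ on a fine grid $\{i/m_k\}$, which I find cleaner. The piecewise linear interpolation $Q_{\nu_k}$ of $\widehat Q_k$ at the nodes $i/m_k$, with values at $0$ and $1$ taken as $\widehat Q_k(0^+)$ and $\widehat Q_k(1^-)$, has the following properties:
\begin{itemize}
\item it is continuous and strictly increasing;
\item it is piecewise affine, with slopes bounded below by $2/k$ and above by a finite constant (finitely many pieces, bounded values).
\end{itemize}
Hence $\nu_k:=(Q_{\nu_k})_\#\lambda_{|(0,1)}$ has continuous strictly increasing quantile function on a bounded range. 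So $\supp(\nu_k)$ is a bounded interval and $\nu_k\ll\lambda$, with density $1/Q_{\nu_k}'\circ F_{\nu_k}$. Since the slopes are bounded above, this density is bounded away from $0$. This gives (i) and (ii).

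For (iii), I choose $m_k\to\infty$. Pointwise convergence then holds at every continuity point $p$ of $Q_\nu$: the interpolation at $p$ is a convex combination of values of $\widehat Q_k$ at nodes within $1/m_k$ of $p$, and these converge to $Q_\nu(p)$ by monotonicity and continuity at $p$. The hard point is convergence at every $p\in(0,1)$, including discontinuity points. Left-continuity of $Q_\nu$ handles this, provided I interpolate using only nodes to the left. Concretely, I set
\[
Q_{\nu_k}(p):=\text{linear interpolation of the values }\widehat Q_k(i/m_k)\text{ placed at the nodes }(i+1)/m_k,
\]
so that $Q_{\nu_k}(p)$ is sandwiched between $\widehat Q_k(p-2/m_k)$ and $\widehat Q_k(p)$. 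As $k\to\infty$, both bounds tend to $Q_\nu(p^-)=Q_\nu(p)$. Near the endpoints, where the shifted nodes leave $[0,1]$, I use constant extrapolation.

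For (iv), monotonicity together with the sandwich gives
\[
|Q_{\nu_k}(p)|\le\max\bigl(|\widehat Q_k(p)|,|\widehat Q_k(p-2/m_k)|\bigr)
\]
in the interior, and this is bounded by $|Q_\nu(p)|+|Q_\nu(p/2)|+1$ once $2/m_k\le p/2$. Near $p=0$, where this fails, I bound instead by $|\widehat Q_k(0^+)|\le|Q_\nu(0^+)|\wedge k$. This is the delicate step, because $Q_\nu(0^+)$ may be $-\infty$. I would fix it by choosing $m_k$ large relative to $k$ (say $m_k\ge$ the first grid size for which $|Q_\nu(2/m_k)|\ge k$), which makes the truncation level dominate. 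The resulting envelope
\[
Q(p):=|Q_\nu(p)|+|Q_\nu(p/2)|+|Q_\nu(1-(1-p)/2)|+1
\]
is integrable, because $\int_0^1|Q_\nu(p/2)|\,dp\le2\|Q_\nu\|_{L^1}$. I expect checking this uniform domination near both endpoints, and coordinating the choice of $m_k$ with $k$, to be the main technical obstacle.
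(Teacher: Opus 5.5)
Your construction is correct in substance but takes a genuinely different route from the paper, and two details need repair.

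The paper does not build the sequence itself. It treats a Dirac $\nu$ with $\mathrm{Unif}_{[m_\nu-1/k,\,m_\nu+1/k]}$. Otherwise it observes that $(\delta_{\mean(\nu)},\nu)$ is an irreducible pair with a one-atomic first marginal, so that \cite[Lemma~A.1]{AcMa26_semidiscrete} applies and yields (i)--(iv) directly. That is short and reuses the semidiscrete machinery, but it is not self-contained. You give an explicit quantile-side construction: truncation, a tilt of slope $2/k$, and piecewise-linear interpolation with left-shifted nodes. It handles Dirac and non-Dirac $\nu$ uniformly. The left shift is the right device. The sandwich $\widehat Q_k(p-2/m_k)\le Q_{\nu_k}(p)\le\widehat Q_k(p)$, combined with left-continuity of $Q_\nu$, gives (iii) at every $p\in(0,1)$, including jump points. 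Your first two variants (symmetric window, unshifted interpolation) would in general not converge to $Q_\nu(p)$ there. Your $Q_{\nu_k}$ also has bounded derivative, which is the regularity that (A4) requires when the corollary is combined with Assumption~\ref{ass:technical-assumptions} in Proposition~\ref{prop:maximal-curves-bound}. If you also want $\mean(\nu_k)=\mean(\nu)$, translate $\nu_k$ by $\mean(\nu)-\mean(\nu_k)$. This shift tends to $0$ by (iii), (iv) and dominated convergence, and it only enlarges the envelope by a constant.

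Two repairs are needed.

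First, ``constant extrapolation'' at the left end makes $Q_{\nu_k}$ constant on $(0,1/m_k]$. Then $\nu_k$ has an atom of mass $1/m_k$ at $\widehat Q_k(0^+)$, which violates (i) and your own claim that every slope is at least $2/k$. Instead, give the node $0$ the value $\widehat Q_k(0^+)-2/(km_k)$, i.e.\ continue the tilt linearly. Every piece then has slope at least $2/k$, and your bounds change by at most $2/(km_k)$.

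Second, your rule for $m_k$ only makes sense when $Q_\nu(0^+)=-\infty$. In that case, requiring $Q_\nu(2/m_k)\le-k$ does work. On $(0,4/m_k)$ you have $|Q_{\nu_k}(p)|\le\max(k,|Q_\nu(p)|)+3$. Moreover $|Q_\nu(p)|\ge k$ for $p\le 2/m_k$, and $|Q_\nu(p/2)|\ge k$ for $2/m_k<p<4/m_k$. So this is absorbed by $|Q_\nu(p)|+|Q_\nu(p/2)|+3$.

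When $\ell:=Q_\nu(0^+)>-\infty$, no such $m_k$ exists for large $k$. Also, $|Q_\nu(p)|+|Q_\nu(p/2)|$ need not dominate $|\ell|$ on $(0,4/m_k)$ unless $m_k$ is tied to $\nu$. Since there $|Q_{\nu_k}(p)|\le\max(|\ell|,|Q_\nu(p)|)+3$, simply add the constant $|\ell|$ to the envelope. The term $|Q_\nu(1-(1-p)/2)|$ is harmless but unnecessary, because your sandwich only looks to the left.
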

\begin{proof}
Set $m_\nu = \mean(\nu)$. If $\nu = \delta_{m_\nu}$, take $\nu_k := \text{Unif}_{[m_\nu-1/k, m_\nu+1/k]}$. Otherwise, let $\mu := \delta_{m_\nu}$. Then $\mu$ is $1$-atomic, and the pair $(\mu,\nu)$ is irreducible. Hence \cite[Lemma A.1]{AcMa26_semidiscrete} applies to $(\mu,\nu)$ and yields a sequence $(\nu_k)_{k \in \NN} \subseteq \Prob_1(\RR)$ such that \ref{cor:approx_nu_2:ac}--\ref{cor:approx_nu_2:dom} hold.
\end{proof}

\begin{proposition}[Approximation of the reference measure II]\label{prop:approx_q_2}
Let $q\in \Prob(\RR)$ be such that $q \ll \lambda$.
Then there exists $(q_k)_{k\in\NN} \subseteq \Prob(\RR)$ such that, for every $k \in \NN$,
\begin{enumerate}[label=(\roman*)]
    \item $\supp(q_k)=\RR$;
    \item $q_k \ll \lambda$ and $\rho_{q_k} \in L^\infty(\RR)$;
    \item $\|\rho_{q_k}-\rho_q\|_{L^1(\RR)} \to 0$ as $k\to\infty$.
\end{enumerate}
\end{proposition}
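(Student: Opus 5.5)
The plan is to build $q_k$ as a mixture of a mollified, truncated copy of $\rho_q$ with a small amount of Gaussian mass. Fix $\rho:=\rho_q\in L^1(\RR)$ and let $\phi$ denote the standard Gaussian density. The construction has three steps.

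\emph{Bounded approximation.} First we truncate: set $\rho^{(k)}:=\min(\rho,k)$. We have $\rho^{(k)}\uparrow\rho$ pointwise, so monotone convergence gives $\|\rho^{(k)}-\rho\|_{L^1(\RR)}\to0$. It follows that $c_k:=\|\rho^{(k)}\|_{L^1(\RR)}\to1$, and $c_k>0$ for all large $k$. The normalized function $\tilde\rho_k:=\rho^{(k)}/c_k$ is then a probability density bounded by $k/c_k$. It also satisfies
\[
\|\tilde\rho_k-\rho\|_{L^1(\RR)}\le \|\rho^{(k)}-\rho\|_{L^1(\RR)}/c_k+|1/c_k-1|\,\|\rho\|_{L^1(\RR)}\to0 .
\]

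\emph{Full support.} Next we spread the mass onto all of $\RR$ by defining
\[
\rho_{q_k}:=(1-\tfrac1k)\,\tilde\rho_k+\tfrac1k\,\phi .
\]
This is a probability density, bounded by $k/c_k+\|\phi\|_\infty$, so $q_k\ll\lambda$ and $\rho_{q_k}\in L^\infty(\RR)$. It is also strictly positive everywhere, which gives $\supp(q_k)=\RR$. Finally,
\[
\|\rho_{q_k}-\rho\|_{L^1(\RR)}\le\|\tilde\rho_k-\rho\|_{L^1(\RR)}+\tfrac1k\,\|\tilde\rho_k-\phi\|_{L^1(\RR)}\le\|\tilde\rho_k-\rho\|_{L^1(\RR)}+\tfrac2k\to0 .
\]

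\emph{Indexing.} Small indices, where $c_k$ might vanish, are handled by relabelling the sequence, or by setting $q_k:=\gamma$ for those finitely many $k$. None of the steps is a real obstacle; the only point requiring care is the normalization, which needs $c_k>0$.

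If continuity of the density were also wanted, one could additionally convolve with a mollifier. This preserves boundedness and full support, and converges in $L^1$ by continuity of translations in $L^1$. It is not needed for the stated properties.
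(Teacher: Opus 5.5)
Your proof is correct, but it differs from the paper's in both steps. The paper does not construct the bounded approximants itself: it obtains densities $\rho_{\widetilde q_k}\to\rho_q$ in $L^1$ with $\rho_{\widetilde q_k}$ bounded by citing \cite[Proposition A.2]{AcMa26_semidiscrete}. It then gets full support by \emph{convolving} with a Gaussian of variance $1/k$. Its $L^1$ estimate therefore needs Young's inequality together with the approximate-identity property $\|\rho_q\ast\gamma_{1/k}-\rho_q\|_{L^1}\to0$.

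You instead build the bounded approximants explicitly by truncation and renormalization. You then get full support by \emph{mixing} in a small Gaussian mass, so the extra $L^1$ error is controlled by the trivial bound $2/k$ and no approximate-identity argument is needed. Your argument is thus fully self-contained and more elementary.

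What the convolution route buys is regularity: $\widetilde q_k\ast\gamma_{1/k}$ has a $C^\infty$ density, whereas yours is only as regular as $\min(\rho_q,k)$. As you note, one further mollification would fix this. For how the paper actually uses the proposition, either construction suffices. In Corollary~\ref{cor:general-non-expansivness} it is used to enforce Assumption~\ref{ass:A8}, and an everywhere positive density gives equivalence with $\lambda$, while a bounded integrable density is square integrable.

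A minor point: your caution about $c_k$ is unnecessary for $k\geq1$. Indeed, $\min(\rho_q,k)>0$ exactly on $\{\rho_q>0\}$, which has positive Lebesgue measure. Only $k=0$ is problematic, and the same holds for $\gamma_{1/k}$ in the paper's construction.
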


\begin{proof}
By \cite[Proposition A.2]{AcMa26_semidiscrete}, there exist $(\widetilde q_k)_{k\in\NN}\subseteq\Prob(\RR)$ with bounded densities such that $\rho_{\widetilde q_k}\to\rho_q$ in $L^1(\RR)$.
Let $\gamma_{1/k}$ be the centered Gaussian law with variance $1/k$, and set $q_k:=\widetilde q_k*\gamma_{1/k}$.
Then $q_k\ll\lambda$, with bounded density $\rho_{q_k}=\rho_{\widetilde q_k}*\gamma_{1/k}$, and $\supp(q_k)=\RR$.
Moreover,
\begin{align*}
	\|\rho_{q_k}-\rho_q\|_{L^1} & \leq \|(\rho_{\widetilde q_k}-\rho_q) \ast \gamma_{1/k}\|_{L^1} + \|\rho_q*\gamma_{1/k}-\rho_q\|_{L^1}\\
	& \leq \|\rho_{\widetilde q_k}-\rho_q\|_{L^1}\|\rho_{\gamma_{1/k}}\|_{L^1} + \|\rho_q*\gamma_{1/k}-\rho_q\|_{L^1} \to 0.
\end{align*}
\end{proof}

\begin{proposition}[Helly's selection theorem for possibly unbounded monotone functions]
\label{prop:helly-selection-thm}
Let $(f_n)_{n \in \NN} \subseteq \Mon((0,1), \RR)$. Then there exist a subsequence $(f_{n_k})_{k \in \NN}$, $f \in \Mon((0,1), \overline \RR)$, and two points $u_1^*,u_2^*\in[0,1]$ with $u_1^*\le u_2^*$, such that
\[
f_{n_k}(u)\longrightarrow f(u), \qquad \text{for a.e. }u\in(0,1),
\]
and
\[
f(u)=-\infty \quad \text{if }u<u_1^*, \qquad f(u)\in\RR \quad \text{if }u_1^*<u<u_2^*, \qquad f(u)=+\infty \quad \text{if }u>u_2^*.
\]
\end{proposition}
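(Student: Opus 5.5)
The plan is to reduce to the classical Helly selection theorem by composing with a bounded increasing homeomorphism of the extended line. Fix $\phi:=\arctan$, extended to $\overline\RR$ by $\phi(\pm\infty)=\pm\pi/2$, and set $g_n:=\phi\circ f_n$. Each $g_n$ is non-decreasing, left-continuous and takes values in $(-\pi/2,\pi/2)$. So $(g_n)$ is a uniformly bounded sequence of monotone functions, and the classical Helly theorem gives a subsequence $(g_{n_k})$ and a non-decreasing $g:(0,1)\to[-\pi/2,\pi/2]$ with $g_{n_k}(u)\to g(u)$ for every $u\in(0,1)$. If left-continuity is wanted, I replace $g$ by its left-continuous regularization $g(u^-)$. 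This agrees with $g$ off the at most countable set of discontinuity points of $g$, so convergence still holds for a.e.\ $u$, which is what the statement asks for. I then define $f:=\phi^{-1}\circ g\in\Mon((0,1),\overline\RR)$. Since $\phi^{-1}:[-\pi/2,\pi/2]\to\overline\RR$ is a continuous increasing bijection for the order topology of $\overline\RR$, we get $f_{n_k}(u)\to f(u)$ in $\overline\RR$ wherever $g_{n_k}(u)\to g(u)$.

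Next I identify the thresholds. Let $u_1^*:=\sup\{u\in(0,1):f(u)=-\infty\}$, with $\sup\emptyset:=0$, and $u_2^*:=\inf\{u\in(0,1):f(u)=+\infty\}$, with $\inf\emptyset:=1$. By monotonicity of $f$, we have $f=-\infty$ on $(0,u_1^*)$ and $f=+\infty$ on $(u_2^*,1)$. Also $u_1^*\le u_2^*$: if $u_2^*<u_1^*$, pick $u_2^*<s<t<u_1^*$; then $f(t)=-\infty$ while $f(s)=+\infty$, contradicting $f(s)\le f(t)$. For $u\in(u_1^*,u_2^*)$, the value $f(u)$ is neither $-\infty$ (by the definition of $u_1^*$ and monotonicity) nor $+\infty$, so $f(u)\in\RR$. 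This gives exactly the trichotomy claimed.

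The only delicate point is the bookkeeping between everywhere and a.e.\ convergence after the left-continuous modification, and the behaviour at the points $u_1^*,u_2^*$ themselves. The statement leaves these points unspecified, so nothing needs to be proved there. Everywhere else, the regularization changes $g$ on at most countably many points, so the a.e.\ convergence survives.
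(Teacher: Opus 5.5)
Your proposal is correct and takes essentially the same route as the paper: compose with $\arctan$, apply the classical Helly theorem, pass to the left-continuous regularization, pull back with the extended tangent, and define $u_1^*,u_2^*$ as the $\sup$ and $\inf$ of the sets where $f=\mp\infty$. Your explicit check of $u_1^*\le u_2^*$ and of the trichotomy spells out what the paper only calls clear.
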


\begin{proof}
Define
\[
g_n(u):=\arctan(f_n(u)), \qquad u\in(0,1).
\]
Then each $g_n$ is non-decreasing and takes values in
$\left(-\frac{\pi}{2},\frac{\pi}{2}\right)$,
so the sequence $(g_n)_{n\ge1}$ is uniformly bounded. By Helly's selection theorem, there exist a subsequence $(g_{n_k})_{k\ge1}$ and a non-decreasing function $\widetilde g:(0,1)\to[-\frac{\pi}{2},\frac{\pi}{2}]$ such that
\[
g_{n_k}(u)\longrightarrow\widetilde g(u)
\]
at every continuity point of $\widetilde g$. Since a monotone function has at most countably many discontinuities, the convergence holds for a.e. $u\in(0,1)$.
Define the left-continuous modification of $\widetilde g$ by
\[
g(u):=\widetilde g(u-):=\sup_{v<u}\widetilde g(v), \qquad u\in(0,1).
\]
Then $g$ is non-decreasing and left-continuous on $(0,1)$. Moreover, $g(u)=\widetilde g(u)$ at every continuity point of $\widetilde g$, and hence
\[
g_{n_k}(u)\longrightarrow g(u), \qquad \text{for a.e. }u\in(0,1).
\]

Now define $f:(0,1)\to[-\infty,+\infty]$ by
\[
f(u)=\begin{cases}-\infty, & g(u)=-\frac{\pi}{2},\\ \tan(g(u)), & g(u)\in(-\frac{\pi}{2},\frac{\pi}{2}),\\ +\infty, & g(u)=\frac{\pi}{2}.\end{cases}
\]
Since $g$ is non-decreasing, so is $f$. As the extended tangent map is continuous and increasing from $[-\frac{\pi}{2},\frac{\pi}{2}]$ to $\overline\RR$, the left-continuity of $g$ implies that $f$ is left-continuous on $(0,1)$. Moreover,
\[
f_{n_k}(u)=\tan(g_{n_k}(u))\longrightarrow f(u), \qquad \text{for a.e. }u\in(0,1),
\]
where the convergence is understood in the extended real line.

Set
\[
A:=\{u\in(0,1):f(u)=-\infty\}, \qquad B:=\{u\in(0,1):f(u)=+\infty\}.
\]
Since $f$ is non-decreasing, $A$ is either empty or an initial interval, while $B$ is either empty or a terminal interval. Define
\[
u_1^*:=\begin{cases}\sup A, & A\neq\emptyset,\\ 0, & A=\emptyset,\end{cases} \qquad u_2^*:=\begin{cases}\inf B, & B\neq\emptyset,\\ 1, & B=\emptyset.\end{cases}
\]
Clearly $u_1^*\le u_2^*$. By construction,
\[
f(u)=-\infty \quad \text{for every }u<u_1^*, \qquad f(u)=+\infty \quad \text{for every }u>u_2^*.
\]
Finally, if $u_1^*<u<u_2^*$, then $u\notin A\cup B$, hence
$f(u)\in\RR$.
\end{proof}

\printbibliography

\end{document}